\newcommand{\R}{{\mathbb{R}}}
\newcommand{\SD}{{\mathcal{D}}}
\newcommand{\Hom}{{\text{Hom}}}
\newcommand{\Sym}{{\text{Sym}}}
\newcommand{\SF}{{\mathcal{F}}}
\newcommand{\SL}{{\mathcal{L}}}
\newcommand{\NS}{{\mathbb{S}}}
\newcommand{\Gr}{{\operatorname{Gr}}}
\newcommand{\rank}{{{\operatorname{rank}}}}
\newcommand{\Op}{{\mathcal{O}p}}
\newcommand{\SR}{{\mathcal{R}}}
\newcommand{\step}{{{\operatorname{step}}}}
\renewcommand{\Pr}{{{\operatorname{Pr}}}}
\newcommand{\Conv}{{\operatorname{Conv}}}
\newcommand{\symb}{{\mathsf{symb}}}
\newcommand{\Diff}{{\operatorname{Diff}}}
\newcommand{\Sec}{\text{Sec}}
\newcounter{theo}[section]\setcounter{theo}{0}
\newtheorem{proposition}{Proposition}
\newtheorem{theorem}[proposition]{Theorem}
\newtheorem{definition}[proposition]{Definition}
\newtheorem{lemma}[proposition]{Lemma}
\newtheorem{corollary}[proposition]{Corollary}
\newtheorem{remark}[proposition]{Remark}
\newtheorem{question}[proposition]{Question}
\newtheorem{example}[proposition]{Example}
\newcommand{\superimpose}[2]{%
  {\ooalign{$#1\@firstoftwo#2$\cr\hfil$#1\@secondoftwo#2$\hfil\cr}}}
\title{Existence and classification of maximal growth distributions}
\subjclass[2020]{Primary: 57R57. Secondary: 57C17.}
\author{Javier Mart\'inez-Aguinaga}
\address{Universidad Complutense de Madrid, Departamento de Algebra, Geometría y Topología, Facultad de
Matemáticas, and Instituto de Ciencias Matemáticas CSIC-UAM-UC3M-UCM, C. Nicolás Cabrera, 13-15,
28049 Madrid, Spain}
\email{xabima@hotmail.com}
\begin{document}

\begin{abstract}

This article tackles the problem of existence and classification of maximal growth distributions on smooth manifolds. We show that maximal growth distributions of $\rank>2$ abide by a full $h$-principle in all dimensions. We make use of M. Gromov's higher order convex integration and, on the way, we establish a new criterion for checking ampleness of a differential relation. 

As a consequence we answer in the positive, for $k>2$, the long-standing open question posed by M. Kazarian and B. Shapiro more than 25 years ago in \cite{KS} of whether any parallelizable manifold admits a $k$-rank distribution of maximal growth. We also answer several related open questions.

For completeness we show that the differential relation of maximal growth for rank-$2$ distributions is not ample in any ambient dimension. Non-ampleness of the Engel and the $(2,3,5)$-conditions follow as particular cases.

\end{abstract}
\maketitle
\addtocontents{toc}{\setcounter{tocdepth}{1}}

\section{Introduction}\label{ssec:intro}

This article tackles the long-standing open problem of existence and classification of maximal growth distributions on smooth manifolds. This question has garnered much interest in the mathematical community in recent years.

Every generic distribution germ has maximal growth \cite{AN}; i.e. \emph{locally}, Lie brackets of vector fields tangent to generic distributions generate new directions as fast as possible. This condition implies, in particular, that such vector fields eventually generate the whole tangent bundle by Lie bracket operation.

This situation drastically changes when one passes to the \emph{global} setting. Indeed, there are global obstructions to the existence of distributions of maximal growth. The existence of orientable Engel distributions (maximal growth rank-$2$ distributions on $4$-manifolds) requires, for example, parallelizability of the ambient manifold \cite{KMS}. A breakthrough in this direction was the paper \cite{Vo} by T. Vogel, where he showed that parallelizability was also a sufficient condition. Later on, there has been further development regarding the classification problem of Engel structures (see \cite{CPPP, Pino, CPV, CPP, PV}).

Some other remarkable and well studied distributions are contact and even-contact structures. These are maximally non-integrable hyperplane fields, the former in odd-dimensional manifolds whereas the latter in even-dimensional manifolds. D. McDuff showed in \cite{McD} that even-contact structures abide by a full $h$-principle, providing thus a classification result from a homotopic viewpoint. This result follows from Gromov's convex integration and contrasts with the case of contact structures.

 In general, contact structures do not abide by a complete $h$-principle unless one restricts to particular subclasses called \em overtwisted\em.  M. S. Borman, Y. Eliashberg and E. Murphy showed in \cite{BEM} that there is a one-to-one correspondence between formal classes of contact structures and overtwisted classes. The existence of tight contact distributions (non-overtwisted) thus shows that an $h$-principle for contact structures cannot hold in general. See \cite{Ben} for their existence in dimension $3$. As for the higher dimensional case, see, for instance, K. Niederkrüger's work \cite{Niederkruger}, which implies tightness of Liouville fillable contact structures.

The existence and classification problem for maximal growth distributions on open manifolds follows from Gromov's \textit{$h$-principle for open manifolds} \cite[p. 79]{Gro86}. See also \cite[Thms. 7.2.3 and 7.2.4]{EM}. Nonetheless, not so much was known for the case of closed manifolds and several open questions have been posed in the literature throughout the years about this problem.  

 In \cite{KS}, M. Kazarian and B. Shapiro described some topological obstructions for maximal growth distributions to exist in general. Regarding these results, and very much in the spirit of the $h$-principle philosophy, they comment: 

\begin{quote} ``The basic problem related to the above topological obstructions is to what extent
vanishing of these obstructions guarantees the existence of a maximal growth subbundle. The result of T. Vogel on the existence of Engel structures on parallelizable $4$-manifolds brings a certain amount of optimism about this problem''. 
\end{quote}
Then they pose the following open question:

\begin{question}[M. Kazarian and B. Shapiro (1996), \cite{KS}]\label{QuestionKS} Does every closed parallelizable $m$-dimensional manifold admit a maximal growth distribution of rank $1 < n < m$?
\end{question}\label{shapiro}

They state the following related problem as well:

\begin{question}[M. Kazarian and B. Shapiro (1996), \cite{KS}]\label{QuestionKS2} When does a manifold admit a distribution whose associated flags have constant (and maximal-possible) ranks throughout the manifold?
\end{question}\label{shapiro}

We will address both questions in this article. Our main Theorem reads as follows.

\begin{theorem}\label{mainthm}
Let $M$ be a smooth manifold. The complete $C^0$-close $h$-principle holds for maximal growth $k$-distributions on $M$ if $k>2$.
\end{theorem}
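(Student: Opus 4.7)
The plan is to realize maximal growth as the solution space of a differential relation $\SR\subset J^{r}(\Gr_k(TM))$ of appropriate order, and to apply Gromov's higher order convex integration. The weak derived flag $\xi=\xi^{1}\subset \xi^{2}\subset\cdots\subset\xi^{s}=TM$ of a distribution $\xi$ is determined by iterated Lie brackets of tangent vector fields, so the requirement that $\rank(\xi^{i+1})$ be maximal is genuinely a condition on the $(i{+}1)$-jet of $\xi$, seen as a section of the Grassmann bundle $\Gr_k(TM)$. This produces a filtration of relations $\SR_{1}\supset\SR_{2}\supset\cdots\supset\SR_{s}=\SR$, and the natural strategy is to run an induction on the step, proving the $h$-principle at each level relative to the previous one. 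The base case reduces to the fact that a $k$-plane field on $M$ is determined up to homotopy by its formal class, which is the standard section-extension $h$-principle for the Grassmann bundle.

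For the inductive step one must verify two things about the relation $\SR_{i+1}$, viewed over the previous step: it is open and it is ample. Openness is straightforward, as maximality of rank amounts to the non-vanishing of a collection of bracket-determinants, which cuts out an open subset of the jet bundle. Ampleness, in the higher-order sense that convex integration requires, is the heart of the argument. Concretely, one slices $\SR_{i+1}$ by the affine subspace of $r$-jets that differ from a fixed formal solution by changing a single principal partial derivative, and asks that the convex hull of this slice be the whole ambient space. Here I would invoke the new ampleness criterion advertised in the abstract, which presumably reduces this convex-hull computation to an algebraic condition on the directions in which one can rotate the iterated brackets. The rank hypothesis $k>2$ should enter precisely at this point: for $k>2$ one has enough linearly independent vector fields inside $\xi$ to realise every missing direction of $\xi^{i+1}$ as a convex combination of admissible variations, whereas for $k=2$ the single commutator $[X_{1},X_{2}]$ leaves no room to manoeuvre, explaining the negative result for rank two mentioned in the abstract.

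Once ampleness of each $\SR_{i+1}$ is established, Gromov's convex integration theorem, in its $C^{0}$-close, relative and parametric form, converts formal solutions into genuine distributions of maximal growth arbitrarily close to the formal data. Iterating step by step along the filtration yields the full $h$-principle, and the relative and $C^{0}$-close flavours are preserved throughout because convex integration outputs holonomic approximations that are uniformly $C^{0}$-close to the underlying section of $\Gr_k(TM)$. The main obstacle in this programme is unambiguously the verification of ampleness at each stage, and in particular pinning down why the rank hypothesis $k>2$ is the exact threshold beyond which the relation becomes ample; establishing the new ampleness criterion and then applying it to the maximal growth relation is where the bulk of the technical work lies.
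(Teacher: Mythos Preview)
Your high-level strategy is correct: one encodes maximal growth as an open relation in $J^{r-1}(\Gr_k(TM))$ and applies Gromov's higher-order convex integration (Theorem~\ref{thm:convexIntegration1}) once ampleness in principal directions is established. However, the paper does \emph{not} proceed by your proposed induction along a filtration $\SR_1\supset\SR_2\supset\cdots$, proving an $h$-principle at each level relative to the previous one. It proves ampleness of the single relation $\SR^{\step\text{-}r}$ directly; by Definition~\ref{def:ampleness1} this already entails checking that each projection $\pi^{r-1}_{i}(\SR^{\step\text{-}r})$ is ample along every direction, but no relative $h$-principle between successive levels is invoked.

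The substantive gap is that you treat the ampleness verification as a black box (``invoke the new ampleness criterion''), whereas this is precisely where all the work lies, and the criterion is not what you guess. The argument has three concrete ingredients you are missing. First, the new criterion (Proposition~\ref{AmplenessCriterion}) is a \emph{fibered} ampleness statement: ampleness of $\SR^{\step\text{-}r}\subset J^{r-1}(\Gr_k(TM))$ is equivalent to ampleness of its pullback $\mathcal{S}^{\step\text{-}r}\subset J^{r-1}\bigl(\overline{\bigoplus_k TM}\bigr)$ to the frame bundle, so one may compute with frames rather than with the Grassmannian. Second, for each non-normal principal direction $\partial_t$ one passes (via the criterion again, Lemma~\ref{CondicionCriterio}) to a $\partial_t$-\emph{adapted} frame, in which the dependence of length-$i$ brackets on the pure order-$(i{-}1)$ derivative in $\partial_t$ becomes explicit (Lemmas~\ref{Lemma:Dperp} and~\ref{Lemma:Dt}). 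Third, this reduces $\mathcal{S}^{i-1}_{\partial_t,\tilde F}\subset\Pr^{i-1}_{\partial_t,\tilde F}$ to an elementary question about maximal-rank matrices with a prescribed block of fixed columns and $k-1$ free columns. For $i<r$ the complement is thin and ampleness is automatic; for $i=r$ one lands on a $GL$-type ampleness (Lemma~\ref{MatricesGL}) which requires at least two free columns, i.e.\ $k-1\geq 2$. This is the exact place where $k>2$ enters---not at intermediate steps as your sketch suggests, but only at the last step of the flag.
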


This, together with Proposition \ref{ExistenceFormal} in Subsection \ref{FormalExistence}, implies the following \textit{existence} result:

\begin{theorem}\label{parallelizability}
Every parallelizable $n$-dimensional smooth manifold $M$ admits a $k$-distribution of maximal growth if $2<k\leq n$.
\end{theorem}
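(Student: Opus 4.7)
The plan is to combine Theorem~\ref{mainthm} with Proposition~\ref{ExistenceFormal}: the complete $h$-principle asserted by the former reduces, for $k>2$, the existence of a genuine maximal growth $k$-distribution on $M$ to the existence of a \emph{formal} one, and Proposition~\ref{ExistenceFormal} is tailored to supply precisely that on parallelizable manifolds. The case $k=n$ is trivial ($\xi = TM$), so I focus on $2 < k < n$. Unpacking the notion, a formal maximal growth $k$-distribution consists of a flag of subbundles $\xi_1 \subset \xi_2 \subset \cdots \subset \xi_r = TM$ with $\rank \xi_1 = k$, decorated with formal bracket maps $\xi_1 \otimes (\xi_i/\xi_{i-1}) \to \xi_{i+1}/\xi_i$ of the maximum possible rank. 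All such bundles are functorial in $TM$, so a parallelization $TM \cong M \times \mathbb{R}^n$ identifies each of them with a product; the construction of a formal section therefore reduces to exhibiting a single fiberwise algebraic model on $\mathbb{R}^n$ and transporting it constantly.

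To produce such a model I would start from the free $k$-generated nilpotent Lie algebra. Its lower central series yields a flag whose graded dimensions are prescribed by Witt's formula, and by construction the bracket is surjective between consecutive graded pieces. Truncating at the first step $r$ where the cumulative dimension meets or exceeds $n$, and replacing the top graded piece by a generic quotient of the appropriate codimension, produces a nilpotent Lie structure on $\mathbb{R}^n$ whose growth vector is the maximal one available in dimension $n$. Pushing this pointwise structure along a parallelization of $M$ yields the required global formal distribution.

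The main delicacy I expect lies in the last truncation: surjectivity of the bracket onto the quotient top piece must be preserved. Since surjectivity is an open condition on the Grassmannian parametrizing codimension-$c$ subspaces of the top graded component, a generic choice suffices. This is the algebraic core I expect Proposition~\ref{ExistenceFormal} to codify, after which the $h$-principle of Theorem~\ref{mainthm} immediately promotes the formal solution to a genuine maximal growth $k$-distribution and closes the argument.
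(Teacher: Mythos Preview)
Your high-level plan is exactly the paper's: Theorem~\ref{parallelizability} is deduced from Theorem~\ref{mainthm} together with Proposition~\ref{ExistenceFormal}, the latter supplying a formal maximal growth distribution on any parallelizable manifold. At that level of granularity your proposal is correct and matches the paper.

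The gap appears when you unpack what a ``formal maximal growth $k$-distribution'' is. What you describe --- a flag $\xi_1\subset\cdots\subset\xi_r=TM$ together with bracket tensors $\xi_1\otimes(\xi_i/\xi_{i-1})\to\xi_{i+1}/\xi_i$ of maximal rank --- is precisely what the paper calls a \emph{formal nilpotentisation} (Definition~\ref{MaxGrNilp}). This is \emph{not} the formal datum to which the $h$-principle of Theorem~\ref{mainthm} applies: that $h$-principle concerns the differential relation $\mathcal{R}^{\text{step-}r}\subset J^{r-1}(\Gr_k(TM))$, and a formal solution is a section of this jet bundle (Definition~\ref{FDstepr}). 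A flag-plus-curvatures is a much coarser object than an $(r-1)$-jet of a $k$-plane field, and transporting a fixed Lie-algebra model along a parallelization only produces the former. The paper closes this gap with Lemma~\ref{ExistenceLieAlgebraImpliesFormalDistribution}: given the fibrewise stratified Lie algebra on $TM$, one uses the Baker--Campbell--Hausdorff formula to identify each fibre $T_pM\simeq\mathfrak{g}$ with the corresponding nilpotent Lie group $G$, pulls back the left-invariant distribution on $G$ via the Riemannian exponential map, and takes its $(r-1)$-jet at $p$. Done parametrically in $p$, this manufactures the required section of $\mathcal{R}^{\text{step-}r}$. This promotion step is the part missing from your sketch.

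A secondary remark: your concern about the last truncation (surjectivity of the bracket onto the quotient top piece) is legitimate, and you resolve it correctly via genericity. The paper simply asserts ``choose any stratified Lie algebra $\mathfrak{g}$ with the right dimensions'' without elaborating, so on this point your sketch is actually more careful than the paper's.
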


Theorem \ref{parallelizability} gives a positive answer to Question \ref{QuestionKS} by M. Kazarian and B. Shapiro for all values $2<n<m$ and Theorem \ref{mainthm} goes further by establishing a whole classification in terms of the formal data. 

Theorem \ref{mainthm} also implies Corollary \ref{ConstantRank} below, which  provides an answer for $k>2$ to Question \ref{QuestionKS2} in terms of an algebro-topological condition; i.e. in terms of existence of formal distributions.

\begin{corollary}\label{ConstantRank}
A smooth manifold admits a  rank $>2$ maximal growth distribution if and only if it admits a formal rank $>2$ distribution of maximal growth.
\end{corollary}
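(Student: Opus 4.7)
The plan is to deduce the corollary directly from Theorem \ref{mainthm}, viewing the ``formal versus genuine'' equivalence as the $\pi_0$-level statement of the $h$-principle. There are two implications to check, and only one of them carries any content.

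\textbf{The ``only if'' direction} is immediate. Given a genuine rank-$k$ maximal growth distribution $\xi\subset TM$ with $k>2$, one obtains a formal distribution of maximal growth by taking $\xi$ itself together with the induced (genuine) flag data coming from its iterated Lie brackets; this flag automatically satisfies the algebraic rank conditions defining the formal maximal growth condition, since it is realized by the actual flag of $\xi$. Hence a formal solution exists.

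\textbf{The ``if'' direction} is where the $h$-principle is needed. Assume $M$ admits a formal rank-$k$ distribution of maximal growth, with $k>2$. Theorem \ref{mainthm} asserts that the complete $C^0$-close $h$-principle holds for maximal growth $k$-distributions on $M$ in this range. The existence component of the $h$-principle says exactly that the inclusion of the space of genuine solutions into the space of formal solutions induces a surjection at the level of connected components. In particular, the given formal distribution is homotopic (through formal solutions) to a genuine rank-$k$ maximal growth distribution on $M$, yielding the desired conclusion.

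\textbf{Where the work actually sits.} I do not expect any obstacle in this deduction itself; the whole substance lies inside Theorem \ref{mainthm}, which packages Gromov's higher order convex integration together with the new ampleness criterion announced in the abstract. The only point one should verify carefully in writing this up is that the notion of ``formal distribution of maximal growth'' being used in the statement of the corollary agrees with the space of formal solutions whose $\pi_0$ is controlled by Theorem \ref{mainthm}; once this bookkeeping is fixed (presumably as in the formal existence discussion of Subsection \ref{FormalExistence} alluded to after the statement of Theorem \ref{parallelizability}), the corollary reduces to the two observations above.
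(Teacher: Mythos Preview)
Your proposal is correct and coincides with the paper's (implicit) reasoning: the corollary is stated as an immediate consequence of Theorem \ref{mainthm}, with the ``only if'' direction being the trivial observation that $j^{r-1}\xi$ is a holonomic (hence formal) section of $\mathcal{R}^{\text{step-}r}$, and the ``if'' direction being exactly the existence part of the complete $h$-principle. Your caveat about matching notions of ``formal distribution'' is addressed by Definition \ref{FDstepr}, which defines a formal distribution precisely as a section of $\mathcal{R}^{\text{step-}r}$, so no additional bookkeeping is needed.
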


The following question, which we also tackle, was posed by A. del Pino in \cite{Pino2} regarding classification results of distributions in terms of the underlying topological/algebraic data:
\begin{question}[A. del Pino (2019), \cite{Pino2}] Given a class of distributions whose nilpotentisation is fibrewise isomorphic to some generic graded
Lie algebra $\mathfrak{g}$ (or a generic family of them), can we tell whether some classic h-principle technique
(say, convex integration) applies to provide a complete classification result, purely in terms of $\mathfrak{g}$?
\end{question}

Theorem \ref{mainthm} gives a positive answer to this question for $k>2$ and the case of formal nilpotentisations of maximal growth (see Definition \ref{MaxGrNilp}). For the case $k=2$, we provide a negative answer to the applicability of convex integration by showing that the associated differential relation, which we denote by $\mathcal{R}^{\step -r}\subset J^{r}\left(\Gr_2(TM)\right)$, fails to be ample, as the following theorem shows.

\begin{theorem}\label{NonAmplenessRank2}
Let $M$ be a smooth manifold of dimension $n$. The differential relation of maximal growth for rank-$2$ distributions $\mathcal{R}^{\step -r}\subset J^{r}\left(\Gr_2(TM)\right)$ is not ample in principal directions for any $n\geq 3$.
\end{theorem}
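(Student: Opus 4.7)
The plan is to exhibit, in local coordinates near a point $p\in M$, a principal direction $\eta\in T^*_pM$ along which the slice of $\mathcal{R}^{\step-r}$ decomposes into open half-spaces, so that its convex hull is properly contained in the full principal subspace. First, localize on a chart and parametrize a rank-$2$ distribution as the graph of a matrix-valued function: $D=\langle X_1,X_2\rangle$ with $X_i=\partial_i+\sum_{j=1}^{n-2}a_{ij}(x)\partial_{2+j}$, normalized so that $A(p)=0$. Under this parametrization, $r$-jets of $D$ at $p$ correspond to $r$-jets of $A$ at $p$, and each step-$k$ maximal growth condition translates to an explicit polynomial condition on the Taylor coefficients of $A$.

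For the contact case $n=3$, the entire relation is the scalar step-$2$ condition $(\partial_1 a_{21}-\partial_2 a_{11})|_p\ne 0$, contained in $J^1$. Choose the principal direction $\eta=dx_1$ at the $0$-jet $A(p)=0$: the principal subspace is $\R^2$, parametrized by $(\partial_1 a_{11}|_p,\partial_1 a_{21}|_p)$ with the remaining $1$-jet entries fixed. Writing $c=\partial_2 a_{11}|_p$, the slice of the relation is $\{(s,t)\in\R^2:t\ne c\}$, two open half-planes whose convex hulls are closed half-planes properly contained in $\R^2$.

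For $n\ge 4$, analyze the top-order growth condition in a principal subspace at a suitable $(r-1)$-jet $\sigma$ in direction $\eta=dx_1$. The key algebraic observation is that in this principal subspace, only those iterated brackets of $X_1,X_2$ whose top-order contribution involves $\partial_1^r$-derivatives actually vary; by the combinatorics of the free rank-$2$ nilpotent Lie algebra, this amounts to a single fully-nested bracket $[X_1,\ldots,[X_1,X_2]]$ at the relevant depth. Choose $\sigma$ so that every lower-step growth condition is satisfied and so that the non-varying bracket contributions to the top step fall into a codimension-one subspace of the relevant obstruction quotient $T_pM/F$, where $F$ is the filtration level just below the top step. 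The top-step condition in the slice then reduces to a single affine inequality in the varying parameters, whose complement is a hyperplane splitting the slice into two open half-spaces whose convex hulls are closed half-spaces properly contained in the full slice. The case $n=4$ (Engel) can be verified explicitly: the $\partial_4$-component of $[X_1,[X_1,X_2]]|_p$ reduces on the slice to $\partial_1^2 a_{22}|_p+C_1$ for a constant $C_1$, and a companion tuning of the $1$-jet makes the fixed contribution from $[X_2,[X_1,X_2]]|_p$ vanish.

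The main obstacle is, for general $n\ge 4$, the combinatorial verification that an $(r-1)$-jet $\sigma$ with the required hyperplane-alignment of non-varying bracket data exists, and that the varying contribution produces a genuinely one-dimensional variation in the obstruction direction. This entails careful bookkeeping of iterated brackets of $X_1,X_2$ via the Jacobi identity, together with a rank count of the symbol maps at each step of the free rank-$2$ nilpotent Lie algebra, in order to confirm that the desired non-ampleness pattern holds in every ambient dimension $n\ge 4$.
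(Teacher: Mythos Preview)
Your strategy is the same as the paper's --- exhibit a principal slice at top order that is the complement of an affine hyperplane --- but you work directly in graph coordinates on $\Gr_2(TM)$, whereas the paper first transfers the question to the frame bundle $\overline{\bigoplus_2 TM}$ via its fibered ampleness criterion (Proposition~\ref{AmplenessCriterion}) and then analyses $\mathcal{S}^{\step-r}$ using $\partial_t$-adapted frames. Your graph frame $X_i=\partial_i+\sum_j a_{ij}\partial_{2+j}$ with $A(p)=0$ is precisely a $\partial_1$-adapted frame at $p$, so the two computations are cousins; what differs is the level of generality at which the combinatorics are carried out.

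The gap is exactly what you label the ``main obstacle'': for general $n\geq 5$ you assert but do not prove that along the $\partial_1$-principal subspace at order $r-1$ only the single bracket $[X_1,\ldots,[X_1,X_2]]$ varies, affinely, and you do not construct the aligned jet $\sigma$. The paper disposes of the first point uniformly in $n$ with two short lemmas. Lemma~\ref{Lemma:Dperp} shows that every length-$r$ bracket involving $F_2$ at least twice is independent of the pure $\partial_t^{r-1}$-derivative (since $j^0(F_2)\perp\partial_t$ allows at most $r-2$ iterations of $D_t$ in its simplified form), while Lemma~\ref{Lemma:Dt} shows $[F_1,\ldots,[F_1,F_2]]=P_t^{r-1}(F_2)+\nu_2$ with $\nu_2$ frozen. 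The top-step condition on the slice then reads: a single free vector $w_1\in\R^n$ must avoid the fixed hyperplane $\SD_\perp^r$, which is non-ample by Lemma~\ref{MatricesHyperplanes}. Your explicit $n=3,4$ checks are instances of this, and once these lemmas are in hand no dimension-by-dimension bookkeeping is needed. For the existence of a formal solution with $\dim\SD_\perp^r=n-1$ (your ``hyperplane alignment''), the paper is also not fully explicit; one obtains it from a Carnot-group model for the given growth vector, quotienting the free step-$r$ algebra by a central ideal contained in $\SD_\perp^r\cap\mathfrak{g}_r$ so that $[F_1,\ldots,[F_1,F_2]]$ stays transverse.
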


Note that non-ampleness of the Engel and the $(2,3,5)$-conditions follow from Theorem \ref{NonAmplenessRank2} as particular cases. Thus, some other techniques may be necessary in order to establish flexibility results for rank-$2$ distributions. Flexibility for the Engel case was explored by A. del Pino and T. Vogel in \cite{PV}, where they  showed that, analogously to the contact case, there exist overtwisted Engel classes abiding by a complete $h$-principle. See also \cite{CPP}.

Theorem \ref{parallelizability} also answers, for $k>2$, an open question raised during the workshop ``Engel Structures'' (see \cite{AIM2}) held in April 2017 at AIM (American Institute of Mathematics, San Jose, California). 

\begin{question}[AIM Problem List (2017), Problem 6.2 in \cite{AIM}]\label{questionAIM}
Are there examples of pairs $(n,k)$ with $n > k \geq 2$ such that for any parallelizable n manifold, there exists a $k$-plane field $\SD\subset TM $ with maximal growth vector? \footnote{We believe that the real intention of the question was to determine for which pairs $(k,n)$ there exist such distributions, since the mere existence of the pairs themselves is trivially answered, for example, by the well known result about existence of contact structures.}
\end{question}

Additionally, the following generalisation of the previous question was posed as well:

\begin{question}[AIM Problem List (2017), Remark to Problem 6.2 in \cite{AIM}]\label{remarkAIM} More generally, instead of parallelizability, one would like to assume only that one is dealing with manifolds admitting appropriate partial flags.
\end{question}

Question \ref{questionAIM} was further refined in \cite{MAP}, in line with the remark above, to ask whether any formal distribution can be homotoped inside the space of formal distributions to an actual maximal growth distribution:

\begin{question}[A. del Pino and J. M-A (2021), \cite{MAP}]\label{refinement} Does any formal distribution of maximal growth admit a holonomic representative up to homotopy?
\end{question}

In \cite{MAP} A. del Pino and the author showed that the result holds for $\step$-$2$, rank$>2$. The general case for distributions of rank and step greater than $2$ remained open and was suggested as an interesting open question, which Theorem \ref{mainthm} now solves. This article answers in the positive, for $k>2$, Question \ref{questionAIM}, Question \ref{remarkAIM} and Question \ref{refinement}.

Note that this is the optimal range for the rank of $\SD$ where one could expect a general $h$-principle statement to hold, since the contact $(2,3)$-case is well known not to abide by an $h$-principle \cite{Ben}.

The approach in this article is essentially different from the one in \cite{MAP}. There the problem was tackled from the point of view of differential forms, whereas here we deal with the description of distributions in terms of frames. This approach allows to apply the general higher-order version of convex integration and thus solve the general higher order case. Along the way, we establish a new criterion for checking ampleness of a differential relation (Subsection \ref{Red1}). This constitutes a result with interest on its own within the general theory of convex integration. 

It is worth noting that there has been some serious development of foundational nature regarding convex integration in recent years (\cite{T1, Th, MT, MAP}). P. Massot and M. Theillière showed in \cite{MT} that convex integration implies the holonomic approximation theorem at the level of order-$1$ jets, providing further evidence of the broad scope of this technique. See also \cite{GS} for a recent application of convex integration to the holomorphic setting, where local $h$-principles are shown to hold for complex Engel and complex even-contact structures.

\textbf{Acknowledgements:} I would like to thank Álvaro del Pino for his generosity sharing with me his knowledge about distributions, $h$-principles and more general topics in Differential Topology. His valuable comments and suggestions have greatly improved this article. I learnt about Lemma \ref{ExistenceLieAlgebraImpliesFormalDistribution} and its proof from him.

 I am grateful to Francisco Presas as well for introducing me into the study of convex integration and for his keen support and interest. I would also like to show my gratitude to Igor Zelenko for the fruitful discussions and his valuable knowledge. Finally, I want to thank the Geometry and Topology group at the University of the Basque Country where I was given a nice environment to develop this and other projects.

During part of the development of this work the author was supported by the ``Programa Predoctoral de Formación de Personal Investigador No Doctor'' scheme funded by the Basque Department of Education (``Departamento de Educación del Gobierno Vasco'').

\section{Maximal growth distributions} \label{ssec:distributions}

A distribution $\SD$ on a smooth manifold $M$ is a subbundle of the tangent bundle $TM$. By subsequently applying the Lie bracket to sections $\Gamma(\SD)$ of $\SD$, we get the following sequence of modules:
\[ \Gamma^1(\SD) \subset \Gamma^2(\SD) \subset \Gamma^3(\SD) \subset \cdots, \]
\[\text{ where } \Gamma^1(\SD) := \Gamma(\SD), \qquad \Gamma^{i+1}(\SD) := [\Gamma^1(\SD),\Gamma^i(\SD)], \]
 In the present article we will assume that all our distributions $\SD$ are  \emph{regular}, i.e. there is a distribution $\SD_i$ so that $\Gamma^i(\SD) = \Gamma(\SD_i)$. 

A key observation is that under these assumptions the flag 
\[ \SD_1 = \SD \subset \SD_2 \subset \SD_3 \subset \cdots \]
stabilises: i.e. there exists a natural number $r$ such that $\SD_i = \SD_{r}$ for all $i \geq r$. By Frobenius' Theorem this is equivalent to $\Gamma^{r}(\SD)$ being involutive and therefore $\SD_{r}$ being the tangent bundle of a foliation $\SF$ on $M$. 
We call the \textbf{Lie flag} associated to/produced by $\SD$ to the previous sequence.

\begin{definition}[Bracket-generating distribution of step $r$]\label{BrGenerating} If $\SD_{r} = \SF$, we say that that $\SD$ \textbf{bracket-generates} $\SF$ and if, moreover, $\SF=TM$ then we say that $\SD$ is \textbf{bracket-generating}. We call the first integer $r$ satisfying $\SD_r= TM$ the \textbf{step} of the distribution and we denote it by $\step(\SD)$.
\end{definition}

\begin{definition}[Growth vector]\label{gv}
Let $\mathfrak{n}_i=\dim(\SD_i)$. The vector $\nu_\SD=(\mathfrak{n}_1, \mathfrak{n}_2, \cdots, \mathfrak{n}_i, \cdots )$ is called the \textbf{growth vector} of $\SD$.
\end{definition}

\begin{remark} Since we are working under the assumption that all our distributions are regular; this definition depends solely on $\SD$ and not on any particular choice of point.
\end{remark} 

Let us discuss how we can establish a partial order in the set of growth vectors. 

\begin{definition}[Partial order on the set of growth vectors, \cite{AN}]\label{orderGV} We say that a distribution $\SD_1$ with growth-vector $\nu_{\SD_1}=(d_1, d_2, \cdots)$ grows faster than a distribution $\SD_2$  with growth vector $\nu_{\SD_2}=(\tilde{d}_1, \tilde{d}_2, \cdots)$ if $d_i\geq\tilde{d}_i$ for all $i\geq 1$ and, also, $d_j>\tilde{d}_j$ for some $j\geq 1$.
This defines a partial order on the set of growth vectors. 
\end{definition}

\begin{remark} Since we have defined a partial order we can talk about maximal elements in the space of growth vectors. There is, thus, a well defined notion of \textbf{maximal growth vectors} for regular distributions on $M$. This gives raise to the following definition.
\end{remark}

\begin{definition}[Maximal growth distribution]\label{MaxGrowthDistr}
We say that a distribution $\SD$ on a smooth manifold $M$ is a \textbf{maximal growth distribution} if its growth vector is maximal according to the partial order in Definition \ref{orderGV}.
\end{definition}

\begin{remark} For each dimension $n\in\mathbb{N}$ and rank $k\in\mathbb{N}$, the entries of a maximal growth vector can be computed explicitly. This is explained in Subsection \ref{subsection:Hall}.
\end{remark}

Let us fix some notation for the rest of the paper:
\begin{itemize}
\item We will use $\langle v_1,\cdots, v_n\rangle $ to denote the linear space spanned by vectors $v_1,\cdots, v_n$. Analogously, for a given set of vectors $A$, $\langle A\rangle$ will denote the linear space spanned by vectors in $A$.
\item Following \cite[1.3]{EM} and \cite{Gro86}, we will use $\Op(p)$ to denote an arbitrarily small but non explicitly specified open neighborhood of a point $p$ in a smooth manifold $M$.  
\end{itemize}

\subsection{The nilpotentisation}\label{sssec:nilpotentisation}

\begin{definition}[Nilpotentisation]\label{Nilpotentisation}
We define the \textbf{nilpotentisation} $\SL(\SD)$ of $\SD$ as the graded vector bundle
\[ \SD_1 \oplus \SD_2/\SD_1 \oplus \cdots \oplus \SD_i/\SD_{i-1} \oplus \cdots \oplus \SD_{r}/\SD_{r-1}. \]
\end{definition}

Note that the composition of taking the Lie bracket with the projection is $C^\infty$-linear

\[ \Gamma^j(\SD) \times \Gamma^i(\SD) \longrightarrow \Gamma^{i+j}(\SD) \longrightarrow  \Gamma^{i+j}(\SD)/\Gamma^{i+j-1}(\SD).\]

Therefore, it descends to the bilinear map

\[ \Omega_{i,j}(\SD): \SD_j/\SD_{j-1} \times \SD_i/\SD_{i-1} \longrightarrow \SD_{i+j}/\SD_{i+j-1} \]

that is called (i,j)-\textbf{curvature}.

All the curvatures together endow $\SL(\SD)$ with a fibrewise Lie bracket compatible with the grading. $\SL(\SD)$  is those endowed with a step-$r$ stratified Lie algebra structure (see Definition \ref{StratifiedLieAlgebra}).
 
Let us introduce some more notation. 

\begin{definition}\label{OrderedMI}
For a fixed integer $k$, define the set of ordered multi-indices $\mathbb{I}_\ell$ of order-$\ell$ as
\[
\mathbb{I}_\ell:=\bigl\{ (i_1,\cdots, i_\ell): 1\leq i_j\leq k\bigr\}.
\] 
Similarly, define $\mathbb{I}_0$ as the singleton  containing the empty index $O=()$. The set of ordered multi-indices of order $\leq i$ is defined as:
\[\mathfrak{I}_{i}=\bigcup_{\ell=0}^{i}\mathbb{I}_\ell. \]
\end{definition}

For a $k$-distribution $\SD$, fix a (possibly local) frame 

\[\mathfrak{Fr}=\lbrace X^1,\cdots, X^{k}\rbrace, \quad \SD=\langle X^1,\cdots, X^{k}\rangle.\]

For a given $I=(\ell_j,\cdots,\ell_1)\in\mathbb{I}_j$, we denote

\[A_I:=[X^{\ell_j}, \cdots, [X^{\ell_3}, [X^{\ell_{2}}, X^{\ell_1}]]\cdots]\]

We borrow the idea for such a compact notation from \cite{Lewis}. Note that if the length of $I$ is $1$, then the expresion $A_I$ denotes a single vector field. 
 
\begin{definition}\label{bracketsI}
For $i> 1$, we define $\mathfrak{Br}^{i}$ as the set of brackets of vector fields $X^1, X^1, \cdots, X^{k}$ (possibly with repetitions) in $\mathfrak{Fr}$ of length less or equal than $i$:

\[
\mathfrak{Br}^{i}:=\lbrace A_I: I\in \mathfrak{I}_i  \rbrace .
\]

\end{definition}

\begin{remark}
The frame $\mathfrak{Fr}$ itself coincides with the set $\mathfrak{Br}^{1}$ (where each element of the frame can be understood as a length-$1$ bracket).
\end{remark}

The following proposition states some characterisations of the bracket-generating condition with a fixed growth vector:

\begin{proposition}\label{characterisationsBG}
A $k$-distribution $\SD$ on a differentiable manifold $M$ has growth vector $\nu=(\mathfrak{n}_i)_{i=1}^r$,  if and only if any, and thus all, of the following equivalent conditions are satisfied:
\begin{itemize}
\item[i)] For $i=1,\cdots, r$, the $i$-th element $\SD_i$ of the Lie flag has dimension $\dim(\SD_i)=\mathfrak{n}_i$.
\item[ii)] All local frames $\mathfrak{Fr}$ of $\SD$ satisfy, for $i=1,\cdots, r$, $\dim\left(\langle \mathfrak{Br}^i\rangle\right)=\mathfrak{n}_i$.
\item[iii)] There exists a local frame $\mathfrak{Fr}$ of $\SD$ s.t. for $i=1,\cdots, r$, $\dim\left(\langle \mathfrak{Br}^i\rangle\right)=\mathfrak{n}_i$.
\end{itemize}

\end{proposition}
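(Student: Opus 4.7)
The plan is to establish a single key lemma from which all three equivalences follow at once: \emph{for every local frame $\mathfrak{Fr}$ of $\SD$, the set $\mathfrak{Br}^i$ generates the $C^\infty$-module $\Gamma^i(\SD)$}. Granted this, the regularity hypothesis on $\SD$ yields $\langle \mathfrak{Br}^i\rangle|_p = (\SD_i)_p$ at every $p$ in the frame's domain, so the pointwise dimension of $\langle \mathfrak{Br}^i\rangle$ equals $\mathfrak{n}_i := \dim(\SD_i)$ uniformly there. With the lemma in hand, (i) is simply Definition \ref{gv}; (i) $\Rightarrow$ (ii) follows from the equality above applied to any frame; (ii) $\Rightarrow$ (iii) is trivial; and (iii) $\Rightarrow$ (i) is obtained by reading the lemma in reverse for the particular frame supplied (together with the fact that $\dim(\SD_i)$ is well-defined and locally constant by regularity).

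The lemma itself I would prove by induction on $i$. The base case $i=1$ is tautological since $\mathfrak{Br}^1 = \mathfrak{Fr}$ is a frame for $\SD_1 = \SD$. For the inductive step, every section of $\Gamma^i(\SD) = [\Gamma(\SD), \Gamma^{i-1}(\SD)]$ is a finite $C^\infty$-combination of brackets $[X, Y]$ with $X \in \Gamma(\SD)$ and $Y \in \Gamma^{i-1}(\SD)$. Writing $X = \sum_a f_a X^a$ and, using the inductive hypothesis, $Y = \sum_J g_J A_J$ with each $A_J \in \mathfrak{Br}^{i-1}$, the Leibniz rule gives
\[
[f_a X^a,\, g_J A_J] \;=\; f_a g_J\, [X^a, A_J] \;+\; f_a (X^a g_J)\, A_J \;-\; g_J (A_J f_a)\, X^a.
\]
The first summand is a $C^\infty$-multiple of an element of $\mathfrak{Br}^i$ by the very definition of $A_I$ as an iterated right-normed bracket. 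The remaining two summands lie in $\Gamma^{i-1}(\SD)$ and hence, by the inductive hypothesis, in the $C^\infty$-span of $\mathfrak{Br}^{i-1} \subset \mathfrak{Br}^i$. Closing the induction proves the lemma.

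The only subtlety worth flagging — and the part I expect to require the most care — is to ensure that the inductive definition $\Gamma^i = [\Gamma^1, \Gamma^{i-1}]$ is compatible with the very specific right-normed shape $A_I = [X^{\ell_j}, \cdots, [X^{\ell_2}, X^{\ell_1}]\cdots]$ that defines $\mathfrak{Br}^i$. Fortunately, these two structures match automatically: the recursion builds each new element precisely by bracketing a frame field $X^a$ with an element already produced at the previous stage, which is exactly the right-normed shape of the $A_J$'s. No Jacobi rearrangement is therefore needed, and the induction goes through cleanly, yielding the proposition.
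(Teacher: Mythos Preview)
Your proof is correct. The paper's own argument is a one-liner: it notes that (i) is the definition of growth vector and that the equivalence with (ii) and (iii) ``readily follows from the fact that the rank of a bilinear map does not depend on the choice of basis.'' You instead prove the stronger and more explicit lemma that $\mathfrak{Br}^i$ generates $\Gamma^i(\SD)$ as a $C^\infty$-module, by a clean induction using the Leibniz rule. The paper's argument is conceptual (invoking the curvatures $\Omega_{i,j}$ implicitly) but leaves the reader to unpack why frame-independence of ranks gives the pointwise equality $\langle\mathfrak{Br}^i\rangle|_p = (\SD_i)_p$; your argument establishes that equality directly and is self-contained. Your observation that the right-normed shape of the $A_I$ matches the recursion $\Gamma^{i+1} = [\Gamma^1,\Gamma^i]$ without needing Jacobi is exactly the point that makes the induction go through, and is worth making explicit as you do.
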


\begin{proof}
Condition $i)$ is just a rephrasing of the Definition of growth vector. The equivalences of $i)$ with conditions $ii)$ and $iii)$ readily follow from the fact that the rank of a bilinear map does not depend on the choice of basis.  
\end{proof}

\subsection{Free Lie algebras and Hall bases}\label{subsection:Hall}

For fixed $k$, the step $r$ of a maximal growth $k$-distribution depends on $n=\dim(M)$. But it is interesting to note that for a fixed step $r$, the first $r-1$ entries of the growth vector $\nu_\SD=(\mathfrak{n}_i)_{i=1}^r$ associated to a maximal growth $k$-rank distribution $\SD$  only depend on the integer $k$ and can be calculated explicitly.  The last entry $\mathfrak{n}_r$ equals the dimension of $M$.  Let us introduce some terminology prior to elaborating on this. 

\begin{definition}[Bracket expression, \cite{Pino2}]
We say that a string $x$, depending on the variable $x$, is a length-$1$ \textbf{bracket expression}. Analogously, the string $[x_0,x_1]$, depending on the variables $x_0$ and $x_1$, is a length-$2$ bracket expression. Inductively, we define a length-$n$ bracket expression to be a string of the form $[A(x_1,\cdots,x_j),B(x_{j+1},\cdots, x_n)]$ where $1 < j < n$ and where $A$ and $B$ are bracket expressions of lengths $j$ and $n-j$, respectively. We denote by $\ell(A)$ the length of $A$. 

Given a set $X=\lbrace x_1,\cdots, x_m\rbrace$, we denote by $M(X)$ the set of all possible bracket expressions of elements in $X$. Additionally, we denote by $M_i(X)$ the set of length-$i$ bracket expressions in $M(X)$.
\end{definition}
\begin{remark} From an abstract algebra point of view, the set $M(X)$ can be understood as the free magma generated by the set $X$.
\end{remark}
Let us fix some notation: the free algebra with $n$ generators is denoted by $\mathfrak{Lie}_n$, while we use the terminology $\mathfrak{Lie}_n^k$ for the linear subspace spanned by elements of length-$k$. 

We will now introduce the notion of a \em Hall basis \em associated to $\mathfrak{Lie}_n$. Bases for free Lie algebras appeared for the first time in the work of M. Hall \cite{Hall} and, since, many articles appeared describing such bases. We will rather work with the Definition from the book \cite[pp. 22--23]{Ser} by J. P. Serre.

\begin{definition}[Hall set associated to $\mathfrak{Lie}_n$ \cite{Ser}]\label{hall}

Consider an ordered set of $n$ generators 
\[X=\lbrace X_1<\cdots< X_n\rbrace.\] 

We say that a totally ordered subset $H\subset M(X)$ is a \textbf{Hall set} 
if the following conditions are satisfied:

\begin{itemize}
\item[i)] $X\subset H$,
\item[ii)] if $\ell(a)<\ell(b)$ for $a, b\in H$, then $a<b$; i.e. brackets in $H$ are ordered by length,
\item[iii)] $[a, b]\in H$ if and only if the following two conditions are satisfied:

\begin{itemize}
\item[iii.a)] $a, b\in H$ and $a<b$,  
\item[iii.b)] either $b\in X$ or $b=[c,d]$ where $c, d\in H$ and $c\leq a$.
\end{itemize}

\end{itemize}

We denote by $\mathcal{V}_i$ the subset of length-$i$ brackets in $H$; i.e. $\mathcal{V}_i:=H\cap M_i(X)$.

\end{definition}

We often refer to a Hall set produced by an ordered set $X$ with $n$-generators as a \textbf{Hall basis} associated to $X$ or, also, associated to $\mathfrak{Lie}_n$. This terminology is justified by the following well known proposition.

\begin{proposition}[Hall sets are graded bases \cite{Hall, Witt, Reu}]
A Hall set $H$ associated to $\mathfrak{Lie}_n$ constitutes a graded basis of $\mathfrak{Lie}_n$, whereas each subset $\mathcal{V}_k\subset H$ provides a basis for $\mathfrak{Lie}_n^k$. Moreover, the dimension $d_{n,k}$ of each $\mathfrak{Lie}_n^k$ is given by  the expression

\[d_{n,k}=\frac{1}{k}\sum_{p|(k)}\mu(p) n^{k/p},
\] 

where $\mu(\cdot)$ denotes the M\"obius function from Number Theory.
\end{proposition}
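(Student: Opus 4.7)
The plan is to follow the classical strategy going back to Hall and Witt, decomposing the statement into three independent claims: that the Hall set $H$ spans $\mathfrak{Lie}_n$, that $H$ is linearly independent, and that the resulting graded dimension count satisfies Witt's formula. Since $H$ is graded by bracket length and $\mathfrak{Lie}_n = \bigoplus_k \mathfrak{Lie}_n^k$, both spanning and independence can be verified in each degree, reducing the problem to showing that $\mathcal{V}_k$ is a basis of $\mathfrak{Lie}_n^k$ for every $k\geq 1$.

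First I would prove spanning by strong induction on the bracket length $\ell$. The base case $\ell=1$ is immediate since $X = \mathcal{V}_1$. For the inductive step, take a bracket $[a,b]$ of length $\ell$ with $a,b$ already expressed in terms of Hall elements by induction; if $[a,b]$ fails to lie in $H$, one uses antisymmetry to force $a<b$ and then iteratively applies the Jacobi identity
\[
[a,[c,d]] \;=\; [[a,c],d] + [c,[a,d]]
\]
whenever the inner bracket $[c,d]$ violates condition iii.b). The key combinatorial point, worked out in detail in \cite[Ch.~IV]{Ser}, is that this rewriting procedure terminates, because each application strictly decreases a suitable well-order on $M(X)$ (for instance, a lexicographic order on the multiset of subbrackets). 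This termination argument is the main obstacle of the proof.

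Second, for linear independence I would invoke the Poincar\'e--Birkhoff--Witt theorem. The universal enveloping algebra of $\mathfrak{Lie}_n$ is canonically identified with the free associative algebra $T(X)$, and $\mathfrak{Lie}_n$ embeds in $T(X)$ as the Lie subalgebra generated by $X$. Ordering $H$ totally as in Definition \ref{hall}, PBW asserts that the ordered monomials $h_{i_1}\cdots h_{i_s}$ with $h_{i_1}\leq\cdots\leq h_{i_s}$ (expanded as iterated commutators inside $T(X)$) form a vector space basis of $T(X)$. Any nontrivial relation among elements of $\mathcal{V}_k$ would produce a nontrivial relation among these ordered monomials, contradicting PBW; hence $\mathcal{V}_k$ is linearly independent.

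Finally, the dimension formula follows from comparing Hilbert series. Grading $T(X)$ by word length gives Hilbert series $\sum_{m\geq 0} n^m t^m = (1-nt)^{-1}$, while PBW together with the previous steps yields
\[
\frac{1}{1-nt} \;=\; \prod_{k\geq 1} \frac{1}{(1-t^k)^{d_{n,k}}}.
\]
Taking logarithms and expanding as power series produces the identity $\sum_{m\geq 1}(nt)^m/m = \sum_{k,j\geq 1} d_{n,k}\, t^{kj}/j$; equating coefficients of $t^m$ on both sides and applying M\"obius inversion to the resulting divisor sum recovers the closed form $d_{n,k} = \tfrac{1}{k}\sum_{p\mid k}\mu(p)\, n^{k/p}$.
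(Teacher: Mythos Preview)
The paper does not prove this proposition at all: it is stated as a well-known result and attributed to \cite{Hall, Witt, Reu} without argument. Your sketch is the standard classical route (rewriting via Jacobi for spanning, PBW for independence, generating-function comparison for Witt's formula) and is essentially what one finds in Serre and Reutenauer.

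One caveat on your independence step: as written, invoking PBW to say that ordered monomials in $H$ form a basis of $T(X)$ presupposes that $H$ is already a basis of $\mathfrak{Lie}_n$, which is what you are trying to prove. The non-circular version of this argument (as in \cite{Reu}) shows directly, by a rewriting procedure parallel to your spanning argument, that ordered Hall monomials span $T(X)$; since $T(X)$ has dimension $n^m$ in degree $m$, a counting argument then forces these monomials to be linearly independent, and in particular the degree-one monomials (the Hall elements themselves) are independent in $T(X)$ and hence in $\mathfrak{Lie}_n$. With that adjustment your outline is correct.
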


Note that from a given local frame $\mathfrak{Fr}$ of a maximal growth distribution $\SD$ we can produce a Hall basis associated to it  as follows. Let

\[\mathfrak{Fr}=\lbrace X_1,\cdots, X_k\rbrace,\quad \SD=\langle X_1, \cdots, X_k\rangle.\]

Note that $\SD_i=\langle \mathfrak{Br}^i\rangle$ for $1\leq i\leq r$. It turns out that length-$i$ brackets involving elements of the frame $\mathfrak{Fr}$ are generators of $\SD_i/\SD_{i-1}$ and, in the case that $i<r$, the only dependance relations among these brackets are the ones given by the antisymmetric property and Jacobi identity. Therefore, the first $r-1$ layers $\mathcal{V}_i$ in a Hall basis $H$ associated to the frame  $\mathfrak{Fr}$ constitutes a graded basis of the first $r-1$ elements of the nilpotentisation. In other words, each $\mathcal{V}_i\subset H$ ($i\neq r$) is a basis of $\SD_i/\SD_{i-1}$ and, so, $\dim(\SD_i/\SD_{i-1})=\dim(\mathcal{L}ie^i_k)=d_{k,i}$.

This is encoded by the following well known proposition. 
\begin{proposition}[\cite{AN}. Growth vector of a distribution of maximal growth]\label{gvMax}
The growth vector of a step-$r$, rank-$k$ distribution $\SD$ of maximal growth on a smooth manifold of dimension $n$ is given by

\[\nu_{\SD}=(\mathfrak{n}_1, \mathfrak{n}_2,\cdots, \mathfrak{n}_{r-1}, \mathfrak{n}_r)=\left(k=d_{k,1}, d_{k,1}+d_{k, 2}, \cdots, \sum_{i=1}^{r-1}d_{k, i}, n\right).\]

\end{proposition}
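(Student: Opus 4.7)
The plan is to first bound each entry $\mathfrak{n}_i$ of the growth vector from above, for an arbitrary step-$r$ rank-$k$ distribution, and then observe that maximality forces these bounds to be attained (the last entry being $n$ for free from the step-$r$ condition).

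Fix a local frame $\mathfrak{Fr}=\{X_1,\ldots,X_k\}$ of $\SD$ near a point $p\in M$. As noted after Definition \ref{Nilpotentisation}, the fibre $\SL(\SD)_p$ of the nilpotentisation carries a step-$r$ stratified Lie algebra structure, generated in degree $1$ by the images of $X_1,\ldots,X_k$. By the universal property of the free Lie algebra, the assignment sending the $k$ free generators of $\mathfrak{Lie}_k$ to $X_1,\ldots,X_k$ descends to a surjective graded Lie algebra homomorphism from the step-$r$ truncation $\bigoplus_{i=1}^{r}\mathfrak{Lie}_k^i$ onto $\SL(\SD)_p$. Comparing graded components, and using $\dim(\mathfrak{Lie}_k^i)=d_{k,i}$ from the Hall basis proposition quoted in the text, one obtains
\[ \dim\bigl(\SD_i/\SD_{i-1}\bigr)\leq d_{k,i}\qquad (1\leq i\leq r-1), \]
whence $\mathfrak{n}_i\leq \sum_{j=1}^{i}d_{k,j}$ for $i<r$. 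Since by hypothesis $\SD$ has step $r$, $\SD_r=TM$ and $\mathfrak{n}_r=n$.

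To conclude, observe that the vector in the statement, call it $\nu^{\max}$, is itself attainable: generic distribution germs have maximal growth by \cite{AN} (concretely, the left-invariant distribution generated by the degree-$1$ part of the truncated free nilpotent Lie algebra on $k$ generators realises the upper bounds $\dim(\SD_i/\SD_{i-1})=d_{k,i}$ simultaneously for every $i<r$). Since $\nu^{\max}$ componentwise dominates every admissible growth vector of a step-$r$ rank-$k$ distribution on an $n$-manifold, and is itself realised, it is the unique maximum in the partial order of Definition \ref{orderGV}. Hence every maximal growth distribution has growth vector exactly $\nu^{\max}$. The main point requiring care is the surjection from the truncated free Lie algebra onto $\SL(\SD)_p$: one must verify that the only linear relations among length-$i$ iterated brackets of the frame are those forced by antisymmetry and Jacobi, which is precisely the content of the free Lie algebra's universal property, since any such relation is a consequence of the Lie algebra axioms and hence holds in every Lie algebra.
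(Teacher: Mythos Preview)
The paper does not supply its own proof of this proposition; it is stated with attribution to \cite{AN}, preceded only by the informal remark that for $i<r$ the only linear relations among length-$i$ brackets of a frame are those forced by antisymmetry and the Jacobi identity, whence the Hall layers $\mathcal{V}_i$ give bases of $\SD_i/\SD_{i-1}$. Your argument is the natural way to make that remark rigorous: the surjection from the truncated free Lie algebra onto $\SL(\SD)_p$ gives the upper bound $\mathfrak{n}_i\le\sum_{j\le i}d_{k,j}$, and attainability then forces equality for a maximal element of the partial order. So your approach and the paper's one-sentence sketch are essentially the same.

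One imprecision is worth flagging. Your attainability witness---the left-invariant distribution on the simply connected group of the step-$r$ free nilpotent Lie algebra on $k$ generators---lives on a manifold of dimension $\sum_{i=1}^{r}d_{k,i}$, which in general is not the given $n$. To produce a germ on an $n$-manifold realising $\nu^{\max}$ you should instead quotient the free nilpotent algebra by a graded ideal concentrated in degree $r$ of the correct dimension (possible precisely when $\sum_{i<r}d_{k,i}<n\le\sum_{i\le r}d_{k,i}$, i.e.\ exactly in the step-$r$ range), and take the left-invariant distribution on the resulting Lie group. This is routine but should be said. Also, your closing paragraph slightly conflates two things: the universal property of $\mathfrak{Lie}_k$ gives you the \emph{surjection} (hence the upper bound) for free, with nothing further to verify; the phrase ``the only linear relations are antisymmetry and Jacobi'' is rather the statement that the map is \emph{injective}, which you neither need nor use.
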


\begin{remark}\label{notationni}
Since the entries of a maximal growth vector depend solely on $k=\rank(\SD)$ and $n=\dim(M)$, we will often reserve the letters $\mathfrak{n}_i$ to denote the $i$-th entry of a maximal growth vector ($k$ and $n$ will be omitted from the notation whenever their values are clear from the context).
\end{remark}

\begin{definition}\label{FreeType}
We say that a $k$-rank maximal growth distribution $\SD$ is of \textbf{free type} if  the last entry of the growth vector satisfies the following equality:
\begin{equation*} \mathfrak{n}_r=\sum_{i=1}^{r}d_{k, i}.\end{equation*}

\end{definition}

\begin{remark}
The condition in Definition \ref{FreeType} tantamounts to the $r$ elements $\SD_i$ $(1\leq i\leq r)$ in the Lie flag of such a maximal growth distribution having the same dimensions as the first $r$ subspaces $\mathfrak{Lie}^i_k$  in the free Lie algebra $\mathfrak{Lie}_k$.
\end{remark}

\begin{example}
Some examples of maximal growth vectors of free type are $\nu_\SD=(2, 3, 5)$, $\nu_\SD=(2, 3, 5, 8)$, $\nu_\SD=(3,6, 14)$ and $\nu_\SD=(4, 10, 30)$; whereas $\nu_\SD=(3,6, 8)$ and $\nu_\SD=(4, 10, 11)$ are  examples of maximal growth vectors \em not \em of free type. \end{example}

Let us elaborate on a concrete example.
\begin{example}\label{ex:hall}
Consider a local frame $\mathfrak{Fr}=\lbrace X_1, X_2, X_3\rbrace$ and the maximal growth distribution it spans $\SD=\langle \mathfrak{Fr}\rangle$. We can explicitly give the elements in the Hall basis associated to $\mathfrak{Fr}$ following Definition \ref{hall}. Let's do this as an example for the three first elements in the flag:

\begin{align*} \SD_1= & \langle X_1, X_2, X_3\rangle,\\
\SD_2/\SD_1= &\langle [X_1, X_2], [X_1, X_3], [X_2, X_3]\rangle, \\
\SD_3/\SD_2= &\langle[X_1, [X_1, X_2]], [X_1, [X_1, X_3]], [X_2, [X_1, X_2]], [X_2, [X_1, X_3]], \\
 &[X_2, [X_2, X_3]], [X_3, [X_1, X_2]], [X_3, [X_1, X_3]], [X_3, [X_2, X_3]]\rangle.\end{align*}

\end{example}
Note that there are length-$3$ brackets not appearing in the Hall basis: $[X_1, [X_2, X_3]], [X_3, [X_2, X_1]], \cdots$ Indeed, they can be written as combinations of the ones in the Hall basis by taking into account Jacobi identity and the antisymmetric property. The growth vector for this distribution $\SD$ reads as $\nu_\SD=(3, 6, 14, \cdots)$. 

We will outline the following elementary fact as a Lemma since it will be useful later. 

\begin{lemma}\label{HallBasisElements}
Consider an ordered set $\lbrace X_1 < \cdots< X_k\rbrace$ and a Hall basis $H\subset M(X)$ associated to $X$ as in Definition \ref{hall}. Then, \begin{itemize}
\item if $j>i$, the length-$\ell$ bracket $[X_i, [X_i,[ \cdots, [X_i, [X_i, X_j]]\cdots]]]$ belongs to the Hall basis $H$.
\item If $j<i$, then the bracket $[X_i, [X_i,[ \cdots, [X_i, [X_j, X_i]]\cdots ]]]$ belongs to the Hall basis $H$.
\end{itemize}
\end{lemma}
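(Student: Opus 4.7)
My plan is to prove both bullet points by induction on the length $\ell$, in each case verifying directly conditions (i)--(iii) of Definition \ref{hall}. The argument is mechanical once one keeps track of what $a$, $b$, $c$, $d$ refer to in condition (iii) at each stage, so the main thing is to set up the notation cleanly.

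For the first bullet, I set $b_1 := [X_i, X_j]$ and inductively $b_{s+1} := [X_i, b_s]$. The base case $b_1 \in H$ holds because $X_i, X_j \in X \subset H$, $X_i < X_j$ (since $i < j$ and Hall sets extend the order on $X$), and $X_j \in X$, which is exactly the alternative ``$b \in X$'' in (iii.b). For the inductive step, assuming $b_s \in H$, I want to conclude $b_{s+1} = [X_i, b_s] \in H$ via (iii). Condition (iii.a) follows from (ii): $\ell(X_i) = 1 < s+1 = \ell(b_s)$ forces $X_i < b_s$. For (iii.b), I write $b_s = [X_i, b_{s-1}]$ (or $b_s = [X_i, X_j]$ when $s=1$), so with $c := X_i$ and $d := b_{s-1}$ (resp.\ $d := X_j$) I have $c, d \in H$ by induction and $c = X_i \leq X_i = a$. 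Hence $b_{s+1} \in H$.

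For the second bullet, the only change is at the base step. I set $b_1 := [X_j, X_i]$; this lies in $H$ because $X_j < X_i$ (since $j < i$) and $X_i \in X$ verifies (iii.b). Then I define $b_{s+1} := [X_i, b_s]$ and argue as before: condition (iii.a) is free from the length ordering, and for (iii.b) I write $b_s = [X_i, b_{s-1}]$ for $s \geq 2$ (so $c = X_i \leq X_i = a$), while for $s=1$ I use $b_1 = [X_j, X_i]$ with $c = X_j$, which satisfies $c = X_j \leq X_i = a$ because $j < i$. The latter is precisely the place where the hypothesis $j < i$ is used, and explains why the inner bracket had to be written as $[X_j, X_i]$ rather than $[X_i, X_j]$.

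I do not expect any real obstacle: both cases are a short induction, and the only subtlety is remembering that a Hall set is defined recursively by (iii), so verifying membership of an iterated expression only requires checking the two Hall conditions at each nesting level. The whole proof should fit in under half a page.
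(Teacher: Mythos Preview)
Your proof is correct and takes a genuinely different route from the paper's. The paper argues indirectly: it observes that a bracket involving $X_i$ exactly $\ell-1$ times and $X_j$ exactly once cannot be expressed as a combination of brackets with a different multiset of generators (since both Jacobi and antisymmetry preserve the multiset), invokes the fact that a Hall set is a graded basis of the free Lie algebra to conclude that \emph{some} such bracket must appear in $H$, and then asserts that the Hall conditions force it to be the one displayed. Your approach instead verifies the recursive conditions (iii.a)--(iii.b) of Definition~\ref{hall} directly by induction on the nesting depth. This is more elementary and self-contained: it does not appeal to the nontrivial theorem that Hall sets are bases, and it makes fully explicit the step the paper leaves as ``by conditions in Definition~\ref{hall} it follows that the only possibilities are the ones stated''. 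The paper's argument, on the other hand, has the conceptual advantage of explaining \emph{why} a bracket of that type must occur in any Hall basis, not just that this particular expression satisfies the axioms. One small remark: you tacitly assume that the total order on $H$ restricts to the given order on $X$ (so that $i<j$ implies $X_i<X_j$ in $H$); this is the standard convention in Serre's definition and is implicitly used in the paper as well, but is not literally spelled out in Definition~\ref{hall}, so you may want to note it.
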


\begin{proof}
First, note that any bracket formed by $X_i$, $X_j$, where the former appears $\ell-1$ times and the latter exactly $1$ time cannot  be expressed as a combination of any other bracket with any other $X_m$ involved, $m\neq i,j$, or $X_i, X_j$ appearing a different number of times. Indeed, the number of times each element appears in each bracket is preserved both by Jacobi identity and the antysimmetric property. So, such a bracket formed by $X_i, X_ j$ must be part of any Hall basis associated to $\mathfrak{Fr}$. By conditions in Definition \ref{hall} it follows that the only possibilities are the ones stated in this Lemma.
\end{proof}

\begin{remark}
Brackets $[X_i, [X_i, [\cdots, [X_i, [X_i, X_j]\cdots ]$ as in Lemma \ref{HallBasisElements} where $X_i$ appears $p$ times are often denoted by $ad_{X_i}^{p}(X_j)$ in the literature (see, for example, \cite[Thm. 3]{Ser}).
\end{remark}

\begin{example}
For $\ell=3$ in Example \ref{ex:hall}, brackets $[X_1, [X_1, X_2]], [X_1, [X_1, X_3]]$ are of such type for $i=1$,  $[X_2, [X_1, X_2]], [X_2, [X_2, X_3]]$ for $i=2$ and $ [X_3, [X_1, X_3]], [X_3, [X_2, X_3]]$ for $i=3$. 
\end{example}

\section{Convex integration}\label{convexintegration}

\subsection{Jet spaces: local description}\label{ssec:jetsCoord}

We will briefly recall some notation and notions about jet spaces. 

For a smooth fiber bundle $E\to M$, we denote by $J^r(E)$ the corresponding space of $r$-jets. For $s<r$ we write $\pi^r_s:J^r(E)\to J^s(E)$ for the corresponding projections, as well as $\pi^r_{bs}:J^r(E)\to E$ for the projections onto the base. 
We often use the notation $j^i(F)$ for denoting $\pi^r_i(F)\in J^r(E)$ if $i\leq r$.

Working in a local chart $\mathcal{U}$ of $E$, we can locally identify $M$ with $\R^n$ and $E$ with $\R^n\times\R^m$, where the fibers of the fibration identify with the $\R^m$ factor. By doing so, we have the following local description of the corresponding jet spaces:

\[E\supset\mathcal{U}\simeq J^r(\R^n\times \R^m)\simeq \R^n\times \R^m\times \Hom(\R^n, \R^m)\times\Sym^2(\R^n, \R^m)\times\cdots\times\Sym^r(\R^n, \R^m),\]

where $\Sym^d(\R^n, \R^m)$ denotes the space of symmetric homogenous polynomials of degree $d$ with entries in $\R^n$ and taking values in $\R^m$. Note that this identification works by identifying each jet $F\in J^r(\R^n\times \R^m)$ with its corresponding $r$-th order Taylor polynomial over the point $\pi^r_{bs}(F)\in \R^n$.

\begin{remark} It turns out that $\pi^1_0$ is a vector fibration whereas $\pi^r_{r-1}$ is in general an affine fibration that can be understood as the map that assigns to each order-$r$ Taylor polynomial its truncated order-$r-1$ part. 
\end{remark}

\subsection{Principal subspaces} \label{ssec:principalSubspaces}

The following notion formalises the idea of two $r$-jets that agree except along a pure derivative of order $r$:
\begin{definition} \label{def:perpJet}
Given a hyperplane $\tau\subset T_pM$, we say that two sections $f,g: M \to X$ have the same $\bot(\tau,r)$-jet at $p \in M$ if

\[ D_p|_\tau j^{r-1}f =  D_p|_\tau j^{r-1}g, \]

where $D_p|_\tau$ means taking the differential at $p$ and restricting it to $\tau$.
\end{definition}
When $\tau$ is a hyperplane field, the $\bot(\tau,r)$-jets form a bundle, which we denote by $ J^{\bot(\tau,r)}(X)$.
There are affine fibrations:

\[ \pi^r_{\bot(\tau,r)} : J^r(X) \rightarrow J^{\bot(\tau,r)}(X), \quad  \pi^{\bot(\tau,r)}_{r-1}: J^{\bot(\tau,r)}(X) \rightarrow J^{r-1}(X). \]

Given a section $f: M \to X$, we write 

\[ j^{\bot(\tau,r)}f: M \to J^{\bot(\tau,r)}(X) \]

for the corresponding section of $\bot(\tau,r)$-jets. A section of this form is called \textbf{holonomic}.

\begin{definition} \label{def:principalSubspaces}
The fibers of $\pi^r_{\bot(\tau,r)}$ are said to be the \textbf{principal subspaces} associated to $\tau$ (and $r$). They are all affine subspaces parallel to one another. Given $F \in J^r(X)$, we write

\[ \Pr^r_{\tau,F} := (\pi^r_{\bot(\tau,F)})^{-1}\left(\pi^r_{\bot(\tau,r)}(F)\right) \]

for the principal subspace that contains it. 
\end{definition}

\begin{remark}\label{IntegrableHyperplanes}
For the most common used flavours of convex integration (ampleness in principal coordinate directions, see \cite[1.1.4]{MAP}, and ampleness in principal directions, see Definition \ref{def:ampleness1}),  all the hyperplane fields $\lambda\in TM$  we work with integrate to a codimension-$1$ foliation. 

Then, when one passes to a local chart (and, thus, a choice of coordinates has been made), instead of talking about hyperplane fields, we will often identify the hyperplanes with the direction they define by duality (given by the Euclidean metric of $\R^n$ in the chart). We then write $\bot(\partial_i,r) := \bot(\ker(dx_i),r)$, where in local coordinates $\partial_i$ is the dual vector field of $dx_i$. 
\end{remark}

Working in local coordinates, and assuming $M=\R^n$ and $\tau:=\ker(dt)$, let $\R^n$ split as a product 
$\R^n=\R\times\R^{n-1}$  with coordinate $t$ for $\R$ and coordinates $x_2, \cdots, x_{n}$ for $\R^n$. The $r$-th order jet $j^r(f)$ of a section $f:\R^n \to X$ is determined by all the partial derivatives 

\[\lbrace{\partial^\alpha_X\partial^\beta_t f\rbrace},\quad \alpha=(\alpha_2,\cdots,\alpha_{n}), \quad |\alpha|=\alpha_2+\cdots+\alpha_{n}\leq r-\beta. \]

Here $\partial^\alpha_X f$ denotes the partial derivative of $f$ with respect to $x_2,\cdots, x_{n}$ with orders $\alpha=(\alpha_2,\cdots,\alpha_{n})$ for each direction, respectively. Analogously, $\partial^\beta_t f:={\partial^\beta f}/{\partial t^\beta}$.

Note that the space $J^{\bot(\tau, r)}$ (Definition \ref{def:perpJet}) corresponds to the set of equivalence classes of partial derivatives $\lbrace{\partial^\alpha_X\partial^\beta_t f\rbrace}$ for which $\beta\leq r-1$ (abusing notation and referring here to the $0$-jet part as an ``order-$0$ derivative''). Then the splitting of $\R^n$ induces an expliting of $j^r(f)$ as follows:

\[j^r(f)= j^{\bot(\tau, r)}f \oplus \partial^r_t f\]

which can be understood as a splitting in ``pure derivatives of order $r$ in the direction of $t$'' and ``the rest of mixed partial derivatives'' (this includes pure derivatives of order $r$ or less in some other directions as well).

This same splitting is thus inherited in the space of formal solutions as well:
\begin{equation}\label{FormalSplitting}
F=j^{\bot(\tau, r)}(F)\oplus j^r_t(F),
\end{equation}

where the component $j^r_t(F)$ represents \textbf{the order-$r$ pure formal derivative of $F$ with respect to $\partial_t$}.  Analogously, $j^{\bot(\tau, r)}(F)$ denotes the component corresponding to the rest of mixed formal partial derivatives.

\begin{remark}
This splitting depends on the splitting of $\R^n=\R[t]\times\R^{n-1}$ but not on the coordinates chosen within $\R^{n-1}$.
\end{remark}

Given a differential relation $\SR \subset J^r(X)$ of order $r$, the projection of $\SR$
\[ \pi^r_{i}(\SR)\subset J^{i}(X) \]

is a differential relation of order $i < r$. 
It is also open since the maps $\pi^r_{i}$ are submersive and thus open.

 Given $F \in J^r(E)$, we write 
\[ \SR^r_{\tau,F} := \SR\cap \Pr^r_{\tau,F} \]

for the restriction of $\SR$ to the principal subspace $\Pr^r_{\tau,F}$ containing $F$. 

\begin{remark}\label{NotationPr}
Sometimes we are interested in considering the analogous objects for the projection of a given relation $\SR$ and point $F$ to the space of lower order jets; i.e.  for $\pi^r_{i}(\SR)$ over $\pi^r_{i}(F)$, where $i<r$. So, for the ease of notation and whenever it is clear from the context, we will still denote $\SR$ for $\pi^r_{i}(\SR)$ and we identify $F$ with $\pi^r_{i}(F)$. This way,  we will often use the following notation:

\[\Pr^i_{\tau,F}:=\Pr^i_{\tau,\pi^r_{i}(F)}, \quad \SR^i_{\tau,F}:=\left(\pi^r_{i}(\SR)\right)^i_{\tau,\pi^r_{i}(F)}. \]

Similarly, using the notation $j^i(F)$ for denoting $\pi^r_i(F)$ (beginning of Subsection \ref{ssec:jetsCoord}), there are well defined decompositions:
\begin{equation}\label{FormalSplitting2}
j^i(F)= j^{\bot(\tau, i)}(F) \oplus j^i_t(F).
\end{equation}
\end{remark}

\subsection{Ampleness in principal directions} \label{ssec:ampleness1}

Ampleness of a differential relation is a key notion within the theory of convex integration. We define ampleness for subsets of affine spaces first. We later adapt this notion to differential relations in jet spaces.
\begin{definition} \label{def:ampleAffine}
Let $X$ be an affine space and $Y \subset X$ a subset. Given $y \in Y$ we write $Y_y$ for the path-connected component containing it. We say that $Y$ is \textbf{ample} if the convex hull $\Conv(Y,y) := \Conv(Y_y)$ of each $Y_y \subset Y$ is the whole of $X$.
We say that ampleness holds trivially if $Y_y$ equals the empty set or the total space.
\end{definition}

Let us provide an example of an ample set which we will state as a Lemma. For completeness we reproduce the proof from \cite{MAP} with minor changes.

\begin{lemma}\label{LemmaGL}
The space $GL(n)$ of non singular matrices of order $n\times n$ is ample inside the space $\mathcal{M}_{n\times n}$ of order $n\times n$-matrices if $n\geq 2$ and non-ample otherwise.
\end{lemma}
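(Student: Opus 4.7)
My plan is to split on whether $n=1$ or $n\geq 2$. For $n=1$, $GL(1)=\R\setminus\{0\}$ is the disjoint union of two convex open rays $(-\infty,0)$ and $(0,\infty)$, so each connected component equals its own convex hull and is properly contained in $\R$; this gives the non-ampleness claim in the $n=1$ case.

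For $n\geq 2$, $GL(n)$ has exactly two path-components $GL^+(n)$ and $GL^-(n)$ distinguished by the sign of the determinant, and these are exchanged by the linear automorphism $X\mapsto PX$, where $P$ is a permutation matrix with $\det P=-1$. Consequently, it suffices to prove $\Conv(GL^+(n))=\mathcal{M}_{n\times n}$.

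The central observation is the following scaling trick. Suppose we can produce matrices $A_1,\dots,A_k\in GL^+(n)$ with $A_1+\cdots+A_k=0$. Then, for any target $M\in\mathcal{M}_{n\times n}$, the identity
\[ M=\tfrac{1}{k}\sum_{i=1}^{k}(M+\lambda A_i) \]
realises $M$ as an average, and each $\det(M+\lambda A_i)$ is a polynomial in $\lambda$ of degree $n$ whose leading coefficient is $\det(A_i)>0$. Hence, for $\lambda$ sufficiently large, every summand lies in $GL^+(n)$, which shows $M\in\Conv(GL^+(n))$. The problem thus reduces to the purely algebraic task of exhibiting such $A_i$'s.

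For even $n$ this is immediate from $I+(-I)=0$ together with $\det(\pm I)=1$. The only step that requires genuine work is the odd case $n\geq 3$, in which $-I\in GL^-(n)$ forbids the two-term construction and one must use at least three matrices. An explicit diagonal choice that I expect to work is
\[ A_1=I,\qquad A_2=\mathrm{diag}\bigl(-2,-\tfrac{1}{2},1,\dots,1\bigr),\qquad A_3=-I-A_2=\mathrm{diag}\bigl(1,-\tfrac{1}{2},-2,\dots,-2\bigr), \]
for which one checks directly that $A_1+A_2+A_3=0$ and that $\det A_1=\det A_2=1$ and $\det A_3=2^{n-3}$ are all positive. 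Finding an appropriate triple in the odd case, and certifying its determinants, is the only non-formal step; the rest of the argument is the scaling trick above together with the symmetry $X \mapsto PX$.
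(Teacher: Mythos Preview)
Your proof is correct and takes a genuinely different route from the paper's. The paper proceeds in two stages: first it writes an arbitrary $M$ as an average of two invertible matrices via $M=\tfrac12\cdot 2(M-\mu I)+\tfrac12\cdot 2\mu I$ for $\mu\notin\operatorname{Spec}(M)\cup\{0\}$, and then it shows that any $M\in GL^\pm(n)$ is an average of two matrices in the \emph{opposite} component by rescaling the first two columns as $M_1=\bigl((2+\varepsilon)v_1,-\varepsilon v_2,v_3,\dots\bigr)$ and $M_2=\bigl(-\varepsilon v_1,(2+\varepsilon)v_2,v_3,\dots\bigr)$. Your argument instead reduces to $GL^+(n)$ by the symmetry $X\mapsto PX$ and then uses a single scaling trick: exhibit $A_1,\dots,A_k\in GL^+(n)$ summing to zero and push $\lambda\to\infty$ in $M=\tfrac1k\sum(M+\lambda A_i)$. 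The paper's column-rescaling avoids any parity split, while your approach needs the even/odd case distinction to manufacture the $A_i$'s; on the other hand, your method handles every target $M$ in one stroke rather than first passing through invertible matrices. Your explicit triple for odd $n$ is correct (indeed $\det A_3=(1)(-\tfrac12)(-2)^{n-2}=2^{n-3}>0$ since $n-2$ is odd), and you also spell out the $n=1$ case, which the paper's proof leaves implicit.
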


\begin{proof}
The space $GL(n)$ has two connected components: the space of positive determinant $n\times n$-matrices $GL^+(n)$ and the one of negative determinant $GL^-(n)$. In order to check that both  components are ample inside $\mathcal{M}_{n\times n}$ we proceed as follows. Note that any $M\in\mathcal{M}_{n\times n}$ can be decomposed as a convex combination of non-singular matrices. Indeed, for $\mu\notin Spec(M)\setminus\lbrace 0\rbrace$, we have that $M=\frac{1}{2}\cdot 2\left(M-\mu\cdot Id\right)+\frac{1}{2}\cdot 2 Id$.

So, we just need to check that any matrix $M\in GL^+(n)$ (alternatively, in $GL^-(n)$) can be expressed as a convex combination of matrices in $GL^-(n)$ (alternatively, in $GL^+(n)$).  Writing the matrix by columns  $M=(v_1,\cdots, v_n)$ we then define, for $\varepsilon>0$:
\[
M_1=\left( (2+\varepsilon)\cdot v_1, -\varepsilon\cdot v_2, \cdots v_n\right),\quad M_2=\left( -\varepsilon\cdot v_1, (2+\varepsilon)\cdot v_2, \cdots v_n\right).
\]

Note that $M=\frac{1}{2}M_1+ \frac{1}{2}M_2$. Also,  both $M_1$ and $M_2$ are non-singular and do not belong to the same connected component as $M$, thus yielding the claim. 
\end{proof}

\begin{example}\label{hyperplane}
A classical example of a non-ample subset of a real affine space $\mathbb{A}$ is the complement of a hyperplane, $\mathbb{A}\setminus H$. Note that this set has two connected components, each of which is a convex set that coincides with its convex hull and, therefore, cannot be ample.
\end{example}

We now introduce the notion of ampleness along principal directions for differential relations. There are some other more general notions of ampleness (the reader can check \cite{MAP} for a comprehensive discussion about the different incarnations of ampleness) but these will not be relevant for our purposes.

\begin{definition} \label{def:ampleness1}
Take a bundle $X \to M$ and a differential relation $\SR \subset J^r(X)$. Take a direction $\lambda \in T_pM$. We say that $\SR$ is
 \textbf{ample along the principal direction} (determined by) $\lambda$ if, for every $F \in \SR$ projecting to $p$, $ \SR_{\lambda,F} \subset \Pr_{\lambda,F}$ is ample. 

More generally,  if the relations $(\pi^r_{r'}(\SR))_{r' = 1,\cdots,r}$ are ample along all non-zero directions $\lambda$, then we say that $\SR$ is \textbf{ample in principal directions}.
\end{definition}
This notion of ampleness is the most common one and when it is satisfied we sometimes just say that $\SR$ is \textbf{ample}.

 Gromov's Convex Integration Theorem was first proved for first order differential relations in \cite[Corollary 1.3.2]{Gr73} and later on for higher order differential relations in \cite[Section 2.4, p. 180]{Gro86}. Although it can be stated in more generality, we will state the following version which is enough for our purposes:

\begin{theorem}[Convex integration] \label{thm:convexIntegration1}
The complete $C^0$-close $h$-principle holds for any open relation that is ample in all principal directions.
\end{theorem}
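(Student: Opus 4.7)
The plan is to prove this by induction on the order $r$ of the jet space, reducing in each step the problem to a one-dimensional ``corrugation'' construction that realises ampleness in a principal direction. For the base case $r=1$, given a formal solution $F \colon M \to \SR \subset J^1(X)$, the underlying section $\pi^1_0 \circ F$ serves as the starting point. Working locally in a foliation chart adapted to a hyperplane field $\tau$, the principal subspace $\Pr^1_{\tau,F}$ parameterises the derivative in a transversal direction $\partial_t$. Ampleness guarantees $\Conv(\SR^1_{\tau,F}) = \Pr^1_{\tau,F}$; in particular, there is a loop inside $\SR^1_{\tau,F}$ whose barycentre equals the value prescribed by $F$. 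The one-dimensional convex integration lemma then produces a highly oscillatory perturbation of the underlying section whose derivative along $\partial_t$ traces this loop, while staying arbitrarily $C^0$-close.

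For the induction step, assume the theorem for all orders less than $r$. Given a formal solution $F$ of an open, ample $\SR \subset J^r(X)$, observe that $\pi^r_{r-1}(\SR) \subset J^{r-1}(X)$ is open and ample in all principal directions, by the hypothesis that ampleness holds for every projection $\pi^r_{r'}(\SR)$. By the inductive hypothesis, the formal solution $\pi^r_{r-1}(F)$ admits a holonomic $C^0$-approximation $j^{r-1}(f)$ landing inside $\pi^r_{r-1}(\SR)$. Replacing the lower-order components of $F$ by $j^{r-1}(f)$ yields a new formal solution that is holonomic through order $r-1$, leaving only the pure $r$-th order derivatives to match. Via the splitting $j^r(F) = j^{\bot(\tau,r)}(F) \oplus j^r_t(F)$, the remaining task reduces to adjusting $\partial_t^r f$ to equal $j^r_t(F)$, successively in each of the coordinate directions in a chart.

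The technical heart, and the step I expect to be the main obstacle, is the corrugation construction along a single principal direction. Given a foliation by hyperplanes $\tau$ and the holonomic-at-level-$(r-1)$ formal solution just produced, ampleness provides, for each leaf, a smooth loop inside $\SR^r_{\tau,F}$ whose average equals the formal top-order value $j^r_t(F)$. The one-dimensional convex integration lemma then yields a perturbation $\tilde f$ of $f$, supported in an arbitrarily thin tubular neighborhood transverse to the leaves, whose $r$-th transverse pure derivative traces the chosen loop. The delicate issue is balancing two competing requirements: the oscillation frequency must be large so that the $C^0$-error is small and the loop is traced densely, but the amplitude of the perturbation of lower-order derivatives must remain small enough that openness of $\SR$ keeps the intermediate jets inside $\SR$ at every scale. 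Controlling these estimates uniformly over parameters is what makes the higher-order case substantively harder than the $r=1$ case.

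The globalisation proceeds by a standard covering argument: cover $M$ by finitely many foliation charts adapted to $n$ linearly independent directions, and apply the corrugation step sequentially in each. Openness of $\SR$ ensures that subsequent corrugations in other directions, if chosen of sufficiently small amplitude, do not disturb the already-achieved match in earlier directions, so that after $n$ iterations in each chart the section $j^r(\tilde f)$ is fully holonomic, lands inside $\SR$, and is $C^0$-close to the original $F$. The parametric, relative, and $C^0$-dense versions of the $h$-principle then follow by running the whole construction in families, with careful attention to parameter-dependent smoothness and to rel-set conditions; these refinements use only openness and smoothness of $\SR$ together with the extension properties of the one-dimensional corrugation.
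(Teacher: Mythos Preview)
The paper does not prove this theorem. It is stated as a known result due to Gromov, with explicit citations to \cite[Corollary 1.3.2]{Gr73} for the first-order case and \cite[Section 2.4, p.~180]{Gro86} for the higher-order case; the paper merely invokes it as a black box in order to apply it once ampleness of $\mathcal{R}^{\step-r}$ has been established.

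Your outline is a reasonable sketch of the standard inductive proof of higher-order convex integration (reduce to order $r-1$ by the inductive hypothesis, then corrugate direction by direction using the one-dimensional convex integration lemma, globalise by a covering argument). As a high-level plan it is broadly correct, though the details you flag as ``the technical heart'' --- the quantitative control of lower-order derivatives during corrugation, and the interaction between successive corrugations in different directions --- are precisely where the real work lies and are not trivial to fill in. But since the paper itself makes no attempt at a proof, there is nothing to compare your approach against here; any assessment of your sketch would be against Gromov's original arguments rather than against this paper.
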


Finally, we finish this Section with the following Lemma that establishes a general situation where ampleness holds and thus Thereom \ref{thm:convexIntegration1} applies.

\begin{lemma}\cite[p.173, Corollary (E)]{Gro86}\label{thinSubsets}
Let $\Sigma\subset J^r(X)$  be a a stratified subset of codimension $\geq 2$ such that the intersection of $\Sigma$ with every principal subspace has codimension $\geq2$ within the principal subspace. Then $J^r(X)\setminus\Sigma$ is an ample differential relation.
\end{lemma}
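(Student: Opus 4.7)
The plan is to reduce the lemma to a single purely geometric claim: the complement $A\setminus S$ of a stratified codimension-$\geq 2$ subset in an affine space $A$ is ample in $A$. Granting this, ampleness of $\SR = J^r(X)\setminus\Sigma$ in principal directions is obtained by verifying it for each projection $\pi^r_{r'}(\SR)$, $r'\leq r$, in every principal direction $\lambda$. At the top order $r'=r$, an arbitrary principal subspace $P=\Pr^r_{\lambda,F}$ is an affine space in which, by hypothesis, $\Sigma\cap P$ is a stratified subset of codimension at least $2$; the geometric claim then yields ampleness of $\SR\cap P = P\setminus(\Sigma\cap P)$.

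For the intermediate orders $r'<r$ I would argue that $\pi^r_{r'}(\SR)$ equals all of $J^{r'}(X)$, which makes the ampleness condition at order $r'$ automatic on every principal subspace. Indeed, a principal subspace $\Pr^r_{\lambda,F}$ records only the pure $r$-th derivative along $\lambda$, hence is contained in a single fibre of $\pi^r_{r'}$: all its points share the image $G=\pi^r_{r'}(F)$. Given any $G\in J^{r'}(X)$, pick a lift $F$ and a direction $\lambda$; the codimension hypothesis forces $\Sigma\cap\Pr^r_{\lambda,F}\subsetneq \Pr^r_{\lambda,F}$, so some point of $\Pr^r_{\lambda,F}$ lies outside $\Sigma$ and still projects to $G$, proving surjectivity.

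The core of the argument is thus the geometric claim. Path-connectedness of $A\setminus S$ follows because $\codim S\geq 2$ allows, stratum by stratum, a $C^0$-small perturbation of any segment into a path avoiding $S$ (a generic $1$-chain misses a codimension-$\geq 2$ stratified subset). For the identity $\Conv(A\setminus S)=A$, the idea is: given $p\in A$, the space of affine lines through $p$ has dimension $\dim A-1$, while the set of such lines meeting $S$ has dimension at most $\dim S\leq \dim A-2$; hence a generic line through $p$ is contained in $A\setminus S$ and realises $p$ as the midpoint of two of its points. The main obstacle I anticipate is the rigorous implementation of these dimension counts: one must verify them stratum by stratum, check that the path constructed in the first step can be threaded between the strata of $S$, and ensure that the generic line through $p$ in the second step hits the same path-component of $A\setminus S$ from both sides. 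Both subtleties are standard consequences of the stratified hypothesis together with Sard-type transversality applied to the tautological incidence correspondence over the strata.
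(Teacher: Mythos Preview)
The paper does not supply its own proof of this lemma: it is quoted verbatim from Gromov's book \cite[p.~173, Corollary~(E)]{Gro86} and used as a black box, so there is no argument in the paper against which to compare yours.

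Your outline is correct in substance. The reduction of the top order $r'=r$ to the affine-geometric claim is exactly the intended mechanism, and your treatment of the lower orders is the right observation: since each principal subspace $\Pr^r_{\lambda,F}$ lies in a single fibre of $\pi^r_{r'}$ for $r'<r$, and $\Sigma$ cannot fill it, the projection $\pi^r_{r'}$ restricted to $J^r(X)\setminus\Sigma$ is surjective onto $J^{r'}(X)$, rendering ampleness at those orders trivial. The geometric claim itself---path-connectedness of $A\setminus S$ and $\Conv(A\setminus S)=A$ when $S$ is stratified of codimension $\geq 2$---is standard and your dimension-count sketch is the usual one. The one point where your write-up is slightly loose is the line argument when $p\in S$: the generic line through $p$ cannot lie entirely in $A\setminus S$ (it contains $p$), but what you actually need and what the dimension count gives is that the generic line meets $S$ only at $p$, so points on either side of $p$ along it lie in $A\setminus S$ and exhibit $p$ as their midpoint. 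With that adjustment the argument is sound, and the ``same path-component'' concern you flag dissolves once path-connectedness is established.
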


\begin{remark}
Subsets $\Sigma$ as in the previous Lemma are called \textbf{thin singularities} or thin subsets. Instances of such subsets arise, for example, in the proofs of the $h$-principle for even-contact structures \cite{McD} or the $h$-principle for real and co-real immersions \cite{Gro86}.
\end{remark}

\begin{remark}\label{NonThinnesOfGL}
Note that the complement of $GL(n)$ inside the space $\mathcal{M}_{n\times n}$ is the space of matrices with zero determinant (determined by the equation $\det(A)=0$). Therefore, the associated singularity has codimension-$1$ and is, thus, not thin. Lemma \ref{LemmaGL} provides, thus, an example of an ample set not having a thin singularity.
\end{remark}

\section{Formal bracket-generating distributions of step-$r$}\label{FBD}

\subsection{Jet-coordinates}\label{jetcoordinates}

We will introduce jet-coordinates so that we can work in a comfortable and transparent way with expressions in jet spaces. We will focus our attention on $J^{r-1}\left(\bigoplus_k T\R^n\right)$ since this is the main jet space we will work with.  We follow, adapting it to our case, the notation and elegant exposition from \cite{Olver}.
\begin{definition}
Define the set of unordered multi-indices $\mathbb{S}_\ell$ of order-$\ell$ as:
\[
\mathbb{S}_\ell:=\mathbb{I}_\ell/\Sigma_\ell,
\] 

(recall Definition \ref{OrderedMI}) where the quotient by the symmetric group $\Sigma_\ell$ just means that two multi-indices are equal if they coincide up to reordering their elements. Similarly, define $\NS_0$ as the singleton containing the empty index $O=()$. The set of non-ordered multi-indices of length less or equal than $i$ is defined as:
\[\mathcal{S}_{i}=\bigcup_{\ell=0}^{i}\mathbb{S}_\ell. \]
\end{definition}

For $F=(F_1,\cdots, F_k)\in J^{r-1}\left(\bigoplus_k T\R^n\right)$ over the point $x\in\R^n$, we write $j^0(F_i)=(u^1_i,\cdots, u_i^n)$. We think the $u_i^j$ as smooth maps depending on $x=(x_1,\cdots, x_n)$. This way, we write 

\begin{equation}\label{CoordinatesF}
F=(x, u^1_1, \cdots, u^j_i \cdots, u^n_k, \cdots, u^j_{i,I}, \cdots), \quad x\in\R^n, \quad i=1,\cdots, k, \quad j=1,\cdots, n, \quad I\in\mathcal{S}_{r-1}
\end{equation}

where $u^j_i$ denotes the $j$-$th$ component of $j^0(F_i)$. Similarly, for $I\in\NS_\ell$, the coordinate $u^j_{i, I}$ identifies with the partial derivative ${\partial^I u^j_i}/{\partial x^I}$, where we treat the jet variables $u^j_i$ as smooth maps. We often use as well the following compact notation: $u_{i, I}:=(u^1_{i,I},\cdots, u^n_{i,I})$ regarding it as a vector in $\R^n$.

Following the same terminology as \cite{Olver}, we introduce the following notion.

\begin{definition}\label{FWD}
A \textbf{differential polynomial} is an $n$-dimensional expression $P(\cdot)=\left(P_1(\cdot),\cdots, P_n(\cdot)\right)$ or, alternatively, $P(\cdot)=\sum_{i=1}^n P_i(\cdot) \partial_i$, where each $P_i(\cdot)$ is a polynomial function in the $u^j_{i,I}$ jet-coordinates,
\[
P(F)=P(u^1_1, \cdots, u^j_i \cdots, u^n_n, \cdots, u^j_{i,I}, \cdots).
\]
The \textbf{order} of $P$ is defined as the maximum order $I$ among all the individual jet-coordinates $u^j_{i,I}$ appearing in the expression. We denote by $\mathcal{P}_{\ell}$ the set of differential polynomials of order less or equal than $\ell$.
 \end{definition}

\begin{remark}\label{OrderWellDefined}
Since we are working under the framework of jets in  $J^{r-1}\left(\bigoplus_k T\R^n\right)$,  $\mathcal{P}_{\ell}$ is not defined for $\ell> r-1$.
\end{remark}

\begin{remark}\label{DiffPolAreSymbols} Upon regarding the jet-coordinates as smooth maps depending on the variables $x_1,\cdots, x_n$, a differential polynomial $P(\cdot)$ of order $m$ can be thought as the symbol $P_\symb: J^{r-1}(\oplus_k T\R^n)\to J^0(\oplus_k T\R^n)$  of some differential operator. \end{remark}

For each coordinate direction $\partial_t\in\R^n$, we define the following operation within the space of differential polynomials.
\begin{definition}\label{DerivationFormal}
A \textbf{directional derivation} $D_t$ is an operation defined in $\mathcal{P}_{r-2}$ which is linear, satisfies Leibniz rule and is defined as follows on individual coordinates:
\begin{equation}
\quad D_t(u_{i,J}^j)=u_{i,(J,t)}^j, \quad J\in\mathcal{S}_{r-2},
\end{equation}

where for $J=(i_1,\cdots, i_\ell)$ the notation $(J,t)$ just denotes the multi-index concatenation $(J,t):=(i_1,\cdots, i_\ell, t)$. By regarding the variables $u_{i, J}^j$ as smooth maps, by linearity and Leibniz rule, the definition of $D_t$ readily extends to the whole $\mathcal{P}_{r-2}$. 
\end{definition}

\begin{remark}
 Note that $D_t$ increases the order of a differential polynomial by one. Therefore, since we are working under the framework of $J^{r-1}\left(\bigoplus_k T\R^n\right)$, the assumption on the order of the differential polynomials (being less than $r-2$) is essential in order Definition \ref{DerivationFormal} to make sense. Recall Remark \ref{OrderWellDefined}.
\end{remark}

\begin{remark}\label{DtSymbol}
Note that $D_t$ just describes the symbol of the usual directional derivative with respect to the coordinate direction $\partial_t$ in $\R^n$, $n\geq 1$.
\end{remark}

Directional derivatives can be composed. For $J\in\NS_s$, the composition of derivatives 

\begin{equation} 
D_{(m_1,\cdots, m_\ell)}(u_{i,J}^j):=D_{m_\ell}\circ\cdots\circ D_{m_1}(u_{i,J}^j)
\end{equation}

 is well defined whenever $s+\ell\leq r-1$. We say that composed derivatives $D_I$ have order $k$  if $I\in\NS_k$.

\begin{remark}
Note that we use unordered subindices $I\in\NS_k$ to denote composed derivations. This reflects the commutativity of their composition.
\end{remark}

Next Lemma is an obvious remark that follows from the way jet-coordinates (\ref{CoordinatesF}) were defined.

\begin{lemma}\label{SplittingDt}
The splitting $F_m=j^{\bot(\tau, r-1)}(F_m)\oplus j^{r-1}_t(F_m)$ from Equation (\ref{FormalSplitting}) (or, equivalently, its analogous formula for lower order jets (\ref{FormalSplitting2})) can be described as follows in jet-coordinates:
\begin{itemize}
\item[i)] $j^{r-1}_t(F_m)$ corresponds to the order-$r-1$ pure formal derivative of $F$ with respect to $\partial_t$ and, thus, in jet-coordinates this component is described by 

\begin{equation*} u_{m, I}= \sum_{j=1}^n u^j_{m, I}\cdot \partial_j, \quad I=(t,\cdots, t)\in\NS_{r-1}.
\end{equation*}

This motivates Definition \ref{FormalDT}.

\item[ii)] Similarly, $j^{\bot(\tau, r)}(F_i)$ corresponds to the rest of mixed formal partial derivatives and the $0$-jets; i.e. its components are of the form $u_{(m, J)}$ with $J\in\mathcal{S}_{r-1}$ and $J\neq (t,\cdots, t)\in\NS_{r-1}$.
\end{itemize}
\end{lemma}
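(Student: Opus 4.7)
The plan is to verify the identification by direct inspection, since both sides of the claim are defined concretely---the left-hand side via the abstract $\bot(\tau,r-1)$-jet construction of Definition \ref{def:perpJet}, and the right-hand side via the jet-coordinate bookkeeping of Equation (\ref{CoordinatesF}). There is essentially no analytic content; the argument is an unpacking of definitions together with a careful matching of multi-indices.

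First I would recall that, under the identification (\ref{CoordinatesF}), every coordinate $u^j_{m,I}$ with $I\in\mathcal{S}_{r-1}$ is by construction the formal symbol of the partial derivative $\partial^I u^j_m/\partial x^I$. Thus an element $F_m\in J^{r-1}(T\R^n)$ is uniquely specified by its base point $x\in\R^n$ together with the collection $\{u^j_{m,I}\}$ ranging over $j\in\{1,\ldots,n\}$ and $I\in\mathcal{S}_{r-1}$, and any splitting of $F_m$ into two pieces is determined by splitting the index set $\mathcal{S}_{r-1}$ into two disjoint families of multi-indices.

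Second I would apply Definition \ref{def:perpJet} with $\tau=\ker(dt)$ in its version for $(r-1)$-jets (Remark \ref{NotationPr} and formula (\ref{FormalSplitting2})). By the equivalence relation defining $\bot(\tau,r-1)$-jets, the $\bot(\tau,r-1)$-part of $F_m$ records exactly those formal partial derivatives of order at most $r-1$ that are \emph{not} the pure $\partial_t$-derivative of maximal order. In the coordinate language of the previous step, this means that the only multi-index excluded from the $\bot$-component is $I=(t,\ldots,t)\in\NS_{r-1}$, whereas every other $J\in\mathcal{S}_{r-1}$ with $J\neq(t,\ldots,t)$ is retained. This is precisely statement (ii).

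Finally I would identify the complementary piece $j^{r-1}_t(F_m)$: the $n$ scalars $u^j_{m,I}$ with $I=(t,\ldots,t)$ assemble into the vector $u_{m,I}=\sum_{j=1}^n u^j_{m,I}\cdot\partial_j$, which is precisely the maximal-order pure $\partial_t$-derivative of $F_m$ read in the standard frame of $T\R^n$. This yields (i). The only point I would be careful about is the index offset: since the ambient jet space is $J^{r-1}$ rather than $J^r$, the maximal-order multi-index has length $r-1$, so one must consistently work with $\NS_{r-1}$ and not $\NS_r$. Beyond this bookkeeping, no real obstacle remains, which is why the statement is labelled a lemma and is used immediately afterwards to \emph{motivate} Definition \ref{FormalDT}.
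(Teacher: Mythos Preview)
Your proposal is correct and matches the paper's approach: the paper presents this lemma as ``an obvious remark that follows from the way jet-coordinates (\ref{CoordinatesF}) were defined'' and gives no proof, so your direct unpacking of Definition~\ref{def:perpJet} against the coordinate description (\ref{CoordinatesF}) is exactly what is intended. The care you take with the index offset ($\NS_{r-1}$ versus $\NS_r$) is appropriate and is the only point requiring attention.
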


\begin{definition}\label{FormalDT} We denote by $P_t^i(\cdot)$ the map that assigns to each jet $F=(F_1,\cdots, F_k)\in J^{r-1}\left(\bigoplus_k T\R^n\right)$ its formal order-$i$ pure derivative in the $\partial_t$-direction:

\begin{equation}
P_{t}^i(\cdot):J^{r-1}\left(\oplus_k T\R^n\right)\to J^0(T\R^n), \quad F\mapsto (u^1_{m,I},\cdots, u^n_{m,I}), \quad I=(t,\cdots, t)\in\NS_{i}.
\end{equation}
\end{definition}

\begin{remark}\label{PtiSymbol}
Note that $P_t^i(\cdot)$  in Definition \ref{FormalDT} just describes, in line with Remark \ref{DtSymbol}, the symbol of $\frac{\partial^i}{\partial t^i}$, the order-$i$ derivative operator with respect to $\partial_t$.
\end{remark}

\subsection{Formal Lie brackets}

Think of vector fields in $\R^n$ as sections $X_j:\R^n\to T\R^n$. Given $k$ vector fields $X_1, \cdots, X_k\in\Sec(T\R^n)$, the usual Lie bracket of $X_i$ and $X_j$, which we denote by $A_{(i,j)}:=[X_i,X_j]\in\Sec(\R^n)$  is a new vector field produced out of the two.

 In other words, each lie bracket $A_{(i,j)}(X_1,\cdots, X_k)$ can be interpreted as a first order differential operator; $[\cdot, \cdot]:\Sec(\oplus_k T\R^n)\to\Sec(T\R^n)$. We can thus describe its symbol as a fibrewise map $[\cdot, \cdot]_{\symb}:J^1(\oplus_k T\R^n, \R^n)\to J^0(T\R^n)$.

Similarly, higher length Lie brackets $A(\cdot, \cdots, \cdot)$ can be interpreted as differential operators of higher order $A(\cdot, \cdots, \cdot):\Sec(\oplus_k T\R^n)\to\Sec(T\R^n)$ and their symbol as a map $A(\cdot, \cdots, \cdot)_{\symb}:J^{\ell-1}(\oplus_\ell T\R^n, \R^n)\to J^0(T\R^n)$, where $\ell$ is the length of the bracket expression.

\begin{remark}
For practical reasons, since there are well defined projections $J^{r-1}(T\R^n)\to J^{\ell-1}(T\R^n)$ for $\ell-1<r-1$, henceforth we will regard the symbols of all these operators as the obvious lifted fibrewise maps $A(\cdot, \cdots, \cdot)_{\symb}:J^{r-1}(\oplus_\ell T\R^n, \R^n)\to J^0(T\R^n)$; i.e. we will consider  $J^{r-1}(\oplus_\ell T\R^n, \R^n)$ as the common domain for all of them.
\end{remark}

From now on, for clarity and since we will mainly work with the symbols rather than with the operators themselves, we will drop the subindex $\symb$ from the notation. We will refer to the symbols of the various lie bracket operators as \textbf{formal Lie brackets} but, again, we will often drop the word ``formal'' whenever it is clear from the context.

Next Lemma describes the symbols of Lie brackets in jet-coordinates (we follow the same notation as in Expression (\ref{CoordinatesF}) for the coordinates of a generic jet $F=(F_1,\cdots, F_k)\in J^{r-1}\left(\bigoplus_k T\R^n\right)$. 

\begin{lemma}\label{LemmaSymbolBrackets} 

A \textbf{formal Lie bracket} of length-$\ell$ $(\ell\leq r)$ is a map 
\[ 
[\cdot,[\cdots, [\cdot,\cdot]\cdots]]: J^{r-1}\left(\bigoplus_k T\R^n\right)\to J^{0}\left( T\R^n\right).
\]
described inductively on the length. For $F=(F_1,\cdots, F_k)\in J^{r-1}\left(\bigoplus_k T\R^n\right)$, length-$\ell$  formal brackets $[F_{a_\ell}, [ \cdots, [F_{a_2}, F_{a_1}]\cdots]]$, $ (a_i=1,\cdots k)$, are described as follows in terms of jet-coordinates.

 Length-$1$ formal brackets $[F_{a_1}]$ correspond to the components $u_{a_1}$:
\begin{equation}\label{BracketCoord1} 
\left[F_{a_1}\right]:= \sum_{i=1}^n u^i_{a_1}\partial_i
\end{equation}
 
We can write length $\ell-1$ brackets as $[F_{a_{\ell-1}}, [ \cdots, [F_{a_2}, F_{a_1}]\cdots]]=\sum_{i=1}^n p^i \partial_i$, where $p=(p^1,\cdots, p^n)$ is a differential polynomial in the jet-coordinates of $F$. Length-$\ell$ brackets then read as:

\begin{equation}\label{BracketCoord2}
\left[F_{a_\ell}, \left[F_{a_{\ell-1}}, \left[ \cdots, \left[F_{a_2}, F_{a_1}\right]\cdots\right]\right]\right]:=\sum_{i,j=1}^n \left( u_{a_\ell}^j D_j (p^i) - p^j D_j(u^i_{a_\ell}) \right)\partial_i.
\end{equation}

\end{lemma}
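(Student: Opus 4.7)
The statement amounts to a purely formal computation: the classical coordinate formula for the Lie bracket of two vector fields in $\R^n$, namely $[X,Y]=\sum_{i,j}\bigl(X^j\partial_j Y^i-Y^j\partial_j X^i\bigr)\partial_i$, lifts verbatim to the level of symbols once one replaces the genuine partial derivative $\partial_j$ by the formal directional derivation $D_j$ on jet-coordinates (which is precisely the symbol of $\partial_j$, cf.\ Remark \ref{DtSymbol}). So my plan is to induct on the length $\ell$ of the bracket expression and check that the inductively constructed differential polynomial agrees with the symbol of the Lie bracket operator.

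\textbf{Base case.} For $\ell=1$, a length-$1$ bracket is just the vector field $F_{a_1}$ itself. Its $0$-jet is by definition $(u^1_{a_1},\dots,u^n_{a_1})$, which is exactly the content of Equation (\ref{BracketCoord1}). In particular this is a differential polynomial of order $0\leq r-1$.

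\textbf{Inductive step.} Assume that for every choice of indices the length-$(\ell-1)$ bracket has a well-defined symbol
\[
\left[F_{a_{\ell-1}},\left[\cdots,\left[F_{a_2},F_{a_1}\right]\cdots\right]\right]_\symb=\sum_{i=1}^n p^i\,\partial_i,
\]
where each $p^i\in\mathcal{P}_{\ell-2}$ is a differential polynomial of order at most $\ell-2$. Now, the Lie bracket viewed as a differential operator
\[
[\,\cdot\,,\,\cdot\,]:\Sec(T\R^n)\times\Sec(T\R^n)\longrightarrow\Sec(T\R^n)
\]
satisfies, for any pair of smooth vector fields $X=\sum x^j\partial_j$ and $Y=\sum y^i\partial_i$, the formula
\[
[X,Y]=\sum_{i,j=1}^n\bigl(x^j\,\partial_j y^i-y^j\,\partial_j x^i\bigr)\partial_i.
\]
Applying this identity with $X=F_{a_\ell}$ (so that $x^j$ corresponds to the jet-coordinate $u^j_{a_\ell}$) and $Y$ the length-$(\ell-1)$ bracket (so that $y^i$ corresponds to the differential polynomial $p^i$) and then passing to symbols yields exactly Equation (\ref{BracketCoord2}). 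Since $p^i$ and $u^i_{a_\ell}$ have respective orders $\leq \ell-2$ and $\leq 0$, Leibniz and linearity of $D_j$ (Definition \ref{DerivationFormal}) produce a differential polynomial of order $\leq \ell-1$, and $\ell-1\leq r-1$, so the expression indeed lands in $\mathcal{P}_{r-1}$ and the operators $D_j$ are legitimately defined at every step (cf.\ Remark \ref{OrderWellDefined}).

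\textbf{Where the content lies.} There is essentially no hard part beyond the bookkeeping; the only thing that could go wrong is a mismatch between the formal derivation $D_j$ and the genuine partial derivative $\partial_j$ acting on coordinates interpreted as smooth maps. That identification is, however, tautological: $D_j$ was defined on the individual jet coordinates $u^h_{i,J}$ as $u^h_{i,(J,j)}$, which corresponds under the identification of jet-coordinates with partial derivatives of sections (Subsection \ref{jetcoordinates}) precisely to $\partial_j u^h_{i,J}$, and it is extended to arbitrary elements of $\mathcal{P}_{r-2}$ by linearity and Leibniz. Hence $D_j$ is by construction the symbol of $\partial_j$, and the symbol of a composition of first-order differential operators is the corresponding composition of their symbols. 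The inductive step therefore reduces to a direct transcription of the classical Lie bracket formula to jet-coordinates, which is what the lemma asserts.
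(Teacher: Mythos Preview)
Your proposal is correct and matches the paper's approach: the paper gives no proof of this lemma at all, simply remarking afterwards that it ``is just a rephrasing, in terms of jet-coordinates, of the well known formulas for Lie brackets.'' Your write-up supplies exactly the bookkeeping (induction on length, order bounds ensuring the $D_j$ are well-defined) that the paper leaves implicit.
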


\begin{remark}
For the ease of notation, from now on we will often denote along the article a bracket of the form $\left[F_{b_1}, \left[F_{a_{b}}, \left[ \cdots, \left[F_{b_{\ell-1}}, F_{b_\ell}\right]\cdots\right]\right]\right]$ as $A_{(b_1,\cdots, b_\ell)}(F)$. For example, the bracket in Equation (\ref{BracketCoord2}) can be expressed as $A_{(a_\ell,\cdots, a_1)}(F)$.
\end{remark}

\begin{remark}
Note that Lemma \ref{LemmaSymbolBrackets}  is just a rephrasing, in terms of jet-coordiantes, of the well known formulas for Lie brackets. As such, we can recover usual properties of the Lie bracket, as Remark \ref{multilinearity} and Lemma \ref{antisymmetry} illustrate. Equivalently, both readily follow from Lemma \ref{LemmaSymbolBrackets}.
\end{remark}

\begin{remark}\label{multilinearity}
Each formal bracket is multilinear in its entries.
\end{remark}

\begin{lemma}\label{antisymmetry}
For any multi-index $I=(a_\ell,\cdots, a_2, a_1)$, $\ell\geq 2$, consider the multi-index $\tilde{I}=(a_\ell,\cdots, a_1, a_2)$ defined by reversing the last two entries of $I$. Then the following equality holds:

\begin{equation}\label{AntSym} A_I(F)=-A_{\tilde{I}}(F)\end{equation}
\end{lemma}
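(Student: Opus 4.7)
The plan is to reduce Lemma \ref{antisymmetry} to the antisymmetry of the innermost length-$2$ bracket and then propagate the sign through the outer bracket structure via the multilinearity noted in Remark \ref{multilinearity}. Since the formal brackets are the symbols of the ordinary Lie brackets (Lemma \ref{LemmaSymbolBrackets}), this is the natural translation into jet-coordinate language of the classical antisymmetry proof for vector fields.

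First I would handle the base case $\ell = 2$ by a direct substitution into Equation (\ref{BracketCoord2}): plugging in $p = (u^1_{a_1}, \ldots, u^n_{a_1})$, the length-$1$ bracket obtained from Equation (\ref{BracketCoord1}), yields
\[
[F_{a_2}, F_{a_1}] = \sum_{i,j=1}^n \bigl( u^j_{a_2}\, D_j(u^i_{a_1}) - u^j_{a_1}\, D_j(u^i_{a_2}) \bigr)\, \partial_i,
\]
which is visibly antisymmetric under the interchange $a_1 \leftrightarrow a_2$ and thus gives $A_{(a_2, a_1)}(F) = -A_{(a_1, a_2)}(F)$.

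For $\ell \geq 3$ I would write $A_I(F) = [F_{a_\ell}, A_{I'}(F)]$ with $I' = (a_{\ell-1}, \ldots, a_2, a_1)$, and similarly $A_{\tilde I}(F) = [F_{a_\ell}, A_{\tilde{I'}}(F)]$ with $\tilde{I'} = (a_{\ell-1}, \ldots, a_1, a_2)$. The base case, applied at the innermost position of $I'$ and $\tilde{I'}$, gives $A_{\tilde{I'}}(F) = -A_{I'}(F)$ as differential polynomials. Reading off Equation (\ref{BracketCoord2}), the outer bracket $[F_{a_\ell}, \cdot]$ is linear in its second argument $p$, since $p$ enters only through $p^j$ and $D_j(p^i)$, both linear in $p$; this is a special case of the multilinearity of Remark \ref{multilinearity}. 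Therefore replacing $A_{I'}(F)$ by $-A_{I'}(F)$ flips the sign of $A_I(F)$, yielding $A_{\tilde I}(F) = -A_I(F)$. The argument is entirely routine; the only potential obstacle is the bookkeeping of matching the recursive decomposition with the inductive definition of Lemma \ref{LemmaSymbolBrackets}, which holds by construction.
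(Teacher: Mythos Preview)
Your proof is correct and follows essentially the same approach as the paper: both establish antisymmetry of the innermost length-$2$ bracket directly from Equation~(\ref{BracketCoord2}) and then propagate the sign outward using the linearity of each outer bracket in its second argument (the paper phrases this as ``the iterative process described by Equation~(\ref{BracketCoord2})''). Your write-up is in fact more explicit than the paper's, which leaves the inductive step implicit; the only minor wording issue is that your phrase ``the base case, applied at the innermost position of $I'$ and $\tilde{I'}$, gives $A_{\tilde{I'}}(F) = -A_{I'}(F)$'' is really the inductive hypothesis rather than a direct application of the base case, but the intended induction is clear.
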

\begin{proof}
One could argue that this is true for usual brackets and, being a formal bracket simply its symbol, then this condition readily follows. Equivalently, it follows from Lemma \ref{LemmaSymbolBrackets} as well. Note that Condition (\ref{AntSym})  for length-$2$ brackets is equivalent to antisymmetry, which readily follows from Lemma \ref{LemmaSymbolBrackets}. 

For higher order length-brackets, (\ref{AntSym}) follows in the same manner: just note that the iterative process described by Equation (\ref{BracketCoord2}) in Lemma \ref{LemmaSymbolBrackets} implies that Lie bracket expressions are antisymmetric in their last two entries. Thus exchanging the positions of $a_2$ and $a_1$ produces a change of sign in the whole expression. \end{proof}

\begin{definition}\label{SimplifiedForm}
We say that a legth-$\ell$ formal bracket $A_I(F)$, $(I\in\mathbb{I}_\ell)$, is expressed in \textbf{simplified form} if it is written as an expression involving solely non-zero order-$0$ jet-coordinates $u^j_{i}$ and order order $\leq\ell-1$ derivations of those, $D_J(u^j_i), J\in\mathcal{S}_{\ell-1}$.
\end{definition}

\begin{example} Length-$2$ formal brackets $[F_a, F_b]$ in simplified form read as:
\begin{equation}\label{BracketL2} 
\left[F_{a_2}, F_{a_1}\right]:= \sum_{i,j=1}^n \left( u_{a_2}^j D_j (u^i_{a_1}) - u^j_{a_1} D_j (u_{a_2}^i) \right)\partial_i
\end{equation}

Indeed, Expression (\ref{BracketL2}) only involves order-$0$ jet-coordinates $ u_{a_1}^j$ and $ u_{a_2}^j$ and order-$1$ derivations of those. An example of the same bracket not expressed in simplified form would just consist on replacing each $D_j(u_{a_\alpha}^i)$ by order-$1$ jet-coordinates $u_{a_\alpha,(j)}^i$.
\end{example}

\begin{lemma}\label{ExistenceSimplifiedForm}
Each length-$\ell$ bracket $A_{(a_1,\cdots, a_\ell)}(F)$ admits a simplified form for $\ell\geq1$.
\end{lemma}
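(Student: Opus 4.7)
The proof will proceed by induction on the length $\ell$ of the bracket, using the recursive formula (\ref{BracketCoord2}) from Lemma \ref{LemmaSymbolBrackets} together with the linearity and Leibniz properties of the directional derivations $D_j$ from Definition \ref{DerivationFormal}.

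For the base case $\ell=1$, Equation (\ref{BracketCoord1}) expresses $[F_{a_1}] = \sum_{i=1}^n u^i_{a_1}\partial_i$, which involves only order-$0$ jet-coordinates and thus trivially satisfies Definition \ref{SimplifiedForm}. For $\ell=2$, the displayed Equation (\ref{BracketL2}) already shows the formal bracket in simplified form: it involves the order-$0$ coordinates $u^i_{a_1}, u^i_{a_2}$ together with their order-$1$ derivations $D_j(u^i_{a_1}), D_j(u^i_{a_2})$.

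For the inductive step, assume any length-$(\ell-1)$ bracket $A_I(F)$ can be written as $\sum_{i=1}^n p^i \partial_i$ where each component $p^i$ is a polynomial whose only variables are order-$0$ jet-coordinates $u^j_m$ and derivations $D_J(u^j_m)$ with $J\in\mathcal{S}_{\ell-2}$. Plugging this into Equation (\ref{BracketCoord2}) yields
\begin{equation*}
\left[F_{a_\ell}, \left[F_{a_{\ell-1}},\cdots,\left[F_{a_2},F_{a_1}\right]\cdots\right]\right] = \sum_{i,j=1}^n\bigl(u_{a_\ell}^j D_j(p^i) - p^j D_j(u^i_{a_\ell})\bigr)\partial_i.
\end{equation*}
The term $D_j(u^i_{a_\ell})$ is an order-$1$ derivation of an order-$0$ coordinate and thus already has the required form. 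For $D_j(p^i)$, we apply the linearity and Leibniz rule of $D_j$ (Definition \ref{DerivationFormal}): since $p^i$ is a polynomial in the allowed variables, $D_j(p^i)$ becomes a polynomial in expressions of the form $D_{J'}(u^q_m)$ with $J'\in\mathcal{S}_{\ell-1}$, obtained by applying $D_j$ to each factor and summing. Multiplying by $u_{a_\ell}^j$ (order $0$) or by $p^j$ (a polynomial in derivations of order $\leq\ell-2$) preserves this property, so the full expression is a polynomial solely in $u^j_m$ and $D_J(u^j_m)$ with $J\in\mathcal{S}_{\ell-1}$, which is the simplified form for a length-$\ell$ bracket.

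The argument is essentially bookkeeping; the only subtle point is tracking that $D_j$ raises the derivation order by exactly one and that Leibniz preserves the purely ``derivations-of-order-$0$-coordinates'' shape of the expression, which is guaranteed by the definition of $D_j$ itself. No genuine obstacle arises, and the bound $\ell\leq r$ ensures that we never leave the jet space $J^{r-1}\bigl(\bigoplus_k T\R^n\bigr)$ where these derivations are defined (cf. Remark \ref{OrderWellDefined}).
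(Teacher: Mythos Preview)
Your proof is correct and follows essentially the same approach as the paper's: both rely on the iterative formula from Lemma \ref{LemmaSymbolBrackets} to see that each application of the recursion raises the maximal derivation order by one. The paper's proof is a one-sentence sketch pointing to this iterative process, whereas you have spelled out the induction and the role of linearity and Leibniz explicitly.
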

\begin{proof}
It follows from Lemma \ref{LemmaSymbolBrackets} since the Lemma itself describes an iterative process that allows to decompose any length-$\ell$ bracket as a polynomial expression involving only order-$0$ jet-coordinates $u_i^j$ of  $F$ together with successive derivations $D_{j_m}\circ\cdots\circ D_{j_1}(u_i)$ up to order $\ell-1$.
\end{proof}

\begin{lemma}\label{DependanceJet}
The expression $A_{(a_1,\cdots, a_\ell)}(F)$ depends solely on the $\ell-1$-order information of the input $F$; i.e. $A_{(a_1,\cdots, a_\ell)}(\cdot)$ factors through some map $h$ as follows:
\[
A_{(a_1,\cdots, a_\ell)}(\cdot)=h\circ j^{\ell-1}(\cdot).\] 
\end{lemma}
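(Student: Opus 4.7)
The plan is to derive this as a direct corollary of Lemma \ref{ExistenceSimplifiedForm}, which already did the hard work. The key point is simply to read off the orders of the jet-coordinates appearing in the simplified form.

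First, I would apply Lemma \ref{ExistenceSimplifiedForm} to express $A_{(a_1,\cdots, a_\ell)}(F)$ in simplified form. By Definition \ref{SimplifiedForm}, this means writing the bracket as a polynomial expression involving only the order-$0$ jet-coordinates $u^j_i$ and their directional derivations $D_J(u^j_i)$ with $J \in \mathcal{S}_{\ell-1}$.

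Next, I would observe that by Definition \ref{DerivationFormal}, each such derivation satisfies $D_J(u^j_i) = u^j_{i, J}$, where the right-hand side is a jet-coordinate of order $|J| \leq \ell - 1$. Hence the whole polynomial expression for $A_{(a_1,\cdots, a_\ell)}(F)$ involves only jet-coordinates of order at most $\ell - 1$.

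To conclude, I would define $h$ to be the map on $J^{\ell-1}(\oplus_k T\R^n)$ described by the same polynomial expression in the jet-coordinates up to order $\ell-1$; by the previous step this is well defined, and tautologically $A_{(a_1,\cdots, a_\ell)}(F) = h(j^{\ell-1}(F))$. No serious obstacle is expected here, as the content of the statement is entirely unpacked by the simplified form; the only thing to be careful about is matching the indexing conventions (the subscript set $\mathcal{S}_{\ell-1}$ of order $\leq \ell-1$ corresponds to the $(\ell-1)$-jet, consistent with the framework on $J^{r-1}(\oplus_k T\R^n)$ requiring $\ell \leq r$).
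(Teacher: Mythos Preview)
Your proposal is correct and takes essentially the same approach as the paper: both invoke Lemma \ref{ExistenceSimplifiedForm} to write the bracket in simplified form and then observe that only jet-coordinates of order at most $\ell-1$ appear, which immediately yields the factorisation. Your version is slightly more explicit in spelling out how $h$ is defined, but the argument is identical in substance.
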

\begin{proof}
Since $A_\ell(F_{a_1}, \cdots, F_{a_\ell})$ admits a simplified form (Lemma \ref{ExistenceSimplifiedForm}), it can be defined by the order-$0$ jet-coordinates $u_i^j$ of $F$ and derivations of those up to order $\ell-1$, thus yielding the claim.
\end{proof}

\subsection{Formal Lie flags}

Note that since we have well defined formal brackets of jets, we can now define the analogs of the sets $\mathfrak{Br}^{i}$ (Definition \ref{bracketsI}) in the formal setting. 

\begin{definition}\label{BrFormal}
Let Let $F=(F_i)_{i=1}^k\in J^{r-1}(\oplus_k T\R^n)$. For $i> 1$, we define $\mathfrak{Br}_F^{i}$ as the set of brackets of $F$ of length less or equal than $i$ $(i\leq r)$:
\[
\mathfrak{Br}_F^{i}:=\lbrace A_I(F): I\in \mathfrak{I}_i  \rbrace .
\]
\end{definition}

Similarly, we can now define the analog notion of Lie flags in the formal setting as well:

\begin{definition}\label{FormalDi}
Let $F=(F_i)_{i=1}^k\in J^{r-1}(\oplus_k T\R^n)$. For each $i=1,\cdots, r$, we define the planes

\[
\SD_i(F):=\langle \mathfrak{Br}_F^i\rangle.
\]

We call the \textbf{formal Lie flag} associated to/produced by $F$ to the following flag of inclusions:

\[
\SD_1(F)\subset \SD_2(F)\subset\cdots\subset \SD_r(F).
\] 

Similarly, we say that $\SD_F$ has (formal) \textbf{growth vector} $(n_i)_{i=1}^r:=\left(\dim(\SD_i(F)\right)_{i=1}^r$.
\end{definition}

\begin{remark}
Take $p\in\R^n$ and a distribution $\SD\subset T\R^n$. Take a local frame $\mathfrak{Fr}=(X_1,\cdots, X_k)$ of $\SD$ over $\Op(p)$ and consider its associated sets $\mathfrak{Br}^i$. Similarly, consider the jet $F=(F_i)_{i=1}^k\in J^{r-1}(\oplus_k T\R^n)$ associated to the frame; i.e. where $F_i=j^{r-1}(X_i) (i=1,\cdots, k)$. Note that the sets $\mathfrak{Br}^i$ associated to $\mathfrak{Fr}$ coincide with its formal analogs $\mathfrak{Br}_F^i$ over each point in $\Op(p)$.

Consequently, the Lie flag associated to $\SD$ coincides with the formal Lie flag associated to $F$; i.e. for each $i=1,\cdots, r$, $\SD_i=\SD_i(F)$ over each point in $\Op(p)$.
\end{remark}

\begin{remark}\label{ommitF}
For the ease of notation, and whenever it is clear from the context, we will drop the letter $F$ from the notation of the just defined notions; i.e. we will just write $\mathfrak{Br}^{i}$ for $\mathfrak{Br}_F^{i}$ and $\SD_i$ for $\SD_i(F)$. Similarly, we often write $\SD_F$, or plainly $\SD$, for $\SD_1(F)$.
\end{remark}

Recall that usual Lie bracket operators behave well with respect to changes of coordinates. Since formal brackets are just the symbols of the usual bracket operators, this readily translates to the formal setting. We phrase this fact as a Lemma.

\begin{lemma}\label{InvarianceCoordinatesBrackets}
Let $F=(F_i)_{i=1}^k\in J^{r-1}(\oplus_k T\R^n)$ and let $f\in \Diff(\R^n, \R^n)$. Consider then the pushforward $f_*F=(f_*F_i)_{i=1}^k$. Then we have
\[ 
f_*A_{(a_1,\cdots, a_\ell)}(F)=A_{(a_1,\cdots, a_\ell)}(f_*F)\]

\end{lemma}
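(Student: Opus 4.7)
The plan is to reduce the formal statement to the classical naturality of Lie brackets under diffeomorphisms, using the fact that formal brackets are, by construction, symbols of actual differential operators on sections, together with the fact that any jet is realised by an honest section.

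First, I would recall two inputs. (i) By Lemma \ref{DependanceJet}, the value $A_I(F)(p)$ depends only on $j^{\ell-1}(F)(p)$, where $\ell$ is the length of $I$; in particular the whole formal bracket is determined by any section whose $(r-1)$-jet agrees with $F$. (ii) Given any $p\in\R^n$ and any $F\in J^{r-1}(\oplus_k T\R^n)$ over $p$, there exists an honest local frame $(\tilde X_1,\ldots,\tilde X_k)$ defined on $\Op(p)$ whose $(r-1)$-jet at $p$ equals $F$ (e.g.\ take the Taylor polynomial representative of each component, as in Subsection \ref{ssec:jetsCoord}).

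Next I would carry out the identification of the formal and holonomic brackets. For an honest tuple of vector fields $\tilde X=(\tilde X_1,\ldots,\tilde X_k)$, by construction of $A_I$ as the symbol of the iterated Lie-bracket operator (see the paragraph preceding Lemma \ref{LemmaSymbolBrackets} and Remark \ref{DiffPolAreSymbols}), one has the pointwise identity
\begin{equation*}
A_{(a_1,\ldots,a_\ell)}\bigl(j^{r-1}\tilde X\bigr)(q) \;=\; [\tilde X_{a_\ell},[\ldots,[\tilde X_{a_2},\tilde X_{a_1}]\cdots]](q)
\end{equation*}
for every $q$ in the domain of $\tilde X$. This is the bridge between the formal object (the symbol) and the classical iterated Lie bracket of vector fields.

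Now the key step. Choose an honest $\tilde X$ with $j^{r-1}\tilde X(p)=F$. Then $f_*\tilde X$ is an honest tuple of vector fields near $f(p)$, and by construction of the pushforward at the level of jets we have $j^{r-1}(f_*\tilde X)(f(p)) = f_*F$. Applying the bridge identity on both sides and invoking the classical naturality of Lie brackets under diffeomorphisms (i.e.\ $f_*[X,Y]=[f_*X,f_*Y]$, extended inductively to iterated brackets) yields
\begin{equation*}
A_{(a_1,\ldots,a_\ell)}(f_*F) \;=\; A_{(a_1,\ldots,a_\ell)}\!\bigl(j^{r-1}(f_*\tilde X)\bigr)(f(p)) \;=\; f_*\bigl[\tilde X_{a_\ell},[\ldots,[\tilde X_{a_2},\tilde X_{a_1}]\cdots]\bigr](p) \;=\; f_*A_{(a_1,\ldots,a_\ell)}(F),
\end{equation*}
which is the desired equality. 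Since $p$ and $F$ were arbitrary, this proves the lemma.

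The only mildly delicate point is the first step, namely being explicit that the formal bracket really is the symbol of the holonomic operator and hence is independent of which section realises a given jet; once that is stated cleanly, the rest is an application of the standard naturality of Lie brackets, inducted on the length of $I$ if one prefers a fully self-contained argument (the base case $\ell=1$ is the tautology $f_*F_i = f_*F_i$, and the inductive step uses $f_*[X,Y]=[f_*X,f_*Y]$ together with the recursion in Equation (\ref{BracketCoord2})).
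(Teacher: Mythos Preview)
Your proof is correct and follows exactly the approach the paper intends: the paper gives no detailed proof, merely observing before the statement that ``usual Lie bracket operators behave well with respect to changes of coordinates'' and ``since formal brackets are just the symbols of the usual bracket operators, this readily translates to the formal setting.'' Your argument is a careful unpacking of precisely that sentence, realising the jet by an honest section and invoking classical naturality of the Lie bracket.
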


\begin{remark}
Lemma \ref{InvarianceCoordinatesBrackets} implies that the definition of the elements $\SD_i(F)$ does not depend on the choice of coordinates for $\R^n$. Therefore, we can extend the definition of the Lie flag (Definition \ref{FormalDi}) to arbitrary smooth manifolds. 

Indeed, we can just define it in local charts $U\subset M$ and, by the invariance under changes of coordinates (Lemma \ref{InvarianceCoordinatesBrackets}), this definition does not depend on the choice of chart. In other words, these objects are intrinsically well defined for any element $F\in J^{r-1}(\oplus_k TM)$.
\end{remark}

\subsection{Formal distributions}

\begin{definition}
 We define the bundle of $k$-frames over $M$ as the subspace of linearly independent $k$-tuples inside the space of $k$-tuples $\bigoplus_k TM$. We denote this bundle by $\overline{\bigoplus_k TM}\to M$.
\end{definition}

Note that there is a natural bundle projection that maps a $k$-tuple of pointwise linearly independent vector fields $(X_1,\cdots, X_k)$ (also called a $k$-frame) to the linear distribution they span:

\[ \pi: \overline{\bigoplus_k TM} \longrightarrow \Gr_k(TM), \]

which induces a map at the level of $r-$jets:

\[ j^r\pi: J^r\left(\overline{\bigoplus_k TM}\right) \longrightarrow J^r(\Gr_k(TM)). \]

\begin{remark}
We will often denote the map $j^r\pi$ by $\pi^r$.
\end{remark}

\begin{definition}
Given an $r$-tuple of integers of the form $\nu=(\mathfrak{n}_1=k, \mathfrak{n}_2, \cdots, \mathfrak{n}_r=\dim M)$, the differential relation 
\[\mathcal{R}^\nu \subset J^{r-1}\left(\overline{\bigoplus_k TM}\right)\]

  is defined by the set of elements ${F}\in J^{r-1}(\overline{\bigoplus_k TM})$ for which $\dim (\SD_{i}(F))=\mathfrak{n}_i$ for every $1\leq i\leq r$. We say that $\mathcal{S}^\nu$ is the differential relation of \textbf{formal bracket-generating frames} with growth vector $\nu$.
\end{definition}

\begin{definition}\label{formalBG}
In the particular case of $\nu=(\mathfrak{n}_1, \cdots, \mathfrak{n}_{r-1}, \mathfrak{n}_r=\dim M)$ being the maximal growth vector corresponding to the ambient manifold $M$ (where $\mathfrak{n}_{r-1}< \dim M$), sections to the differential relation $\mathcal{S}^\nu$  are called formal frames of \textbf{maximal growth} of step-$r$ and we denote this differential relation by $\mathcal{S}^{\text{step-}r}$.
\end{definition}

\begin{definition}\label{Srel}
A jet ${F}\in J^{r-1}(\Gr_k TM)$ is called \textbf{formally bracket-generating of step-$r$} if $F$ is the projection of some element $\tilde{F}\in \mathcal{S}^{\text{step-}r}$; i.e. $\pi^{r-1}(\tilde{F})=F$. The subset of those elements $F$ will be denoted by $\mathcal{R}^{\text{step-}r}\subset J^{r-1}\left(\Gr_k TM\right)$.
\end{definition}

\begin{definition}\label{FDstepr}
We say that a \textbf{formal distribution} of step-$r$ on $M$ is a smooth section $s:M\to\mathcal{R}^{\text{step-}r}$.
\end{definition}

\begin{remark}
Note that the set of (germs of) holonomic sections to $\mathcal{R}^{\text{step-}r}$ coincides with the set of (germs of) maximal growth distributions on $M$ (recall Definition \ref{MaxGrowthDistr}).
\end{remark} 

\begin{remark}
The terminology $\mathcal{R}^{\text{step-}2}$ was introduced in \cite{MAP} to denote the differential relation defined by distributions of $\step$-2, which are always of maximal growth. We then use the notation $\mathcal{R}^{\text{step-}r}$ to denote the differential relation defined by $\step$-$r$ maximal growth distributions, naturally extending the terminology from \cite{MAP}. 
\end{remark}

\begin{lemma}\label{independenceLieFlagFrame}
Consider jets $G, \tilde{G}\in \mathcal{S}^{\text{step-}r}$ over a point $p\in M$ so that $\pi^r(G)=\pi^r(\tilde G)$. Then the Lie flag associated to $G$ coincides with the one associated to $\tilde{G}$.
\end{lemma}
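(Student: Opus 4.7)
The plan is to show that for each $i=1,\ldots,r$ the subspace $\SD_i(G) \subset T_pM$ depends only on the distribution jet $\pi^{r-1}(G) \in J^{r-1}(\Gr_k(TM))$ and not on the full frame jet $G$. Once this is established, the hypothesis $\pi^{r-1}(G)=\pi^{r-1}(\tilde G)$ immediately yields $\SD_i(G)=\SD_i(\tilde G)$ for all $i$, i.e.\ the equality of the two formal Lie flags.

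\textbf{Step 1 (Setup).} Pass to a local chart around $p$, so that $M=\R^n$ with $p=0$. Choose smooth representatives: frames $X=(X_1,\ldots,X_k)$ and $\tilde X=(\tilde X_1,\ldots,\tilde X_k)$ on $\Op(p)$ with $j^{r-1}_p X = G$ and $j^{r-1}_p \tilde X = \tilde G$. These span germs of honest distributions $\SD$ and $\tilde\SD$ whose $(r-1)$-jets at $p$ coincide, by hypothesis.

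\textbf{Step 2 (Jet-level change of frame).} The goal is to build a smooth matrix function $A:\Op(p)\to GL(k)$ such that the frame $Y:=A\cdot X$ of $\SD$ satisfies $j^{r-1}_p Y = \tilde G$. Work in a local trivialization of $\Gr_k(TM)$ around $\SD_p=\tilde\SD_p$ in which nearby $k$-planes are parameterized as graphs $\phi\in \Hom(\R^k,\R^{n-k})$ over a fixed complement. Writing $X_i=\sum_j A^X_{ij}(E_j+\phi E_j)$ and analogously $\tilde X_i=\sum_j A^{\tilde X}_{ij}(E_j+\tilde\phi E_j)$ for $GL(k)$-valued matrices $A^X,A^{\tilde X}$, the hypothesis $j^{r-1}_p \phi = j^{r-1}_p \tilde\phi$ ensures that the candidate $A:=A^{\tilde X}(A^X)^{-1}$ (or any smooth extension of its $(r-1)$-jet at $p$) matches $A\cdot X$ with $\tilde X$ up to order $r-1$ at $p$, since the contribution of $(\phi-\tilde\phi)$ to the difference vanishes to order $r-1$.

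\textbf{Step 3 (Bracket invariance under change of frame).} By Lemma~\ref{DependanceJet}, each formal bracket $A_I(\cdot)$ of length $\ell\leq r$ factors through the $(r-1)$-jet, so
\begin{equation*}
A_I(\tilde G) \;=\; A_I(j^{r-1}_p Y) \;=\; [Y_{i_1},[Y_{i_2},[\cdots,[Y_{i_{\ell-1}},Y_{i_\ell}]\cdots]]]\big|_p,
\end{equation*}
and $A_I(G)$ is the corresponding iterated Lie bracket of the $X_j$'s at $p$. Since $Y=A\cdot X$ with $A$ smooth and $A(p)\in GL(k)$, repeated use of $[fU,V]=f[U,V]-V(f)\,U$ expresses each length-$\ell$ bracket of the $Y_i$'s as a smooth-function combination of brackets of the $X_j$'s of length $\leq\ell$. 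Evaluating at $p$ and invoking invertibility of $A(p)$, an induction on $i$ gives $\langle \mathfrak{Br}^i_Y\rangle_p = \langle \mathfrak{Br}^i_X\rangle_p$, hence $\SD_i(\tilde G)=\SD_i(G)$ for every $i$.

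\textbf{Main obstacle.} The substantive point is Step~2: making precise the fact that two distribution jets coinciding at $p$ allow one to match frames to the corresponding order. Once this is expressed as a system of linear equations on Taylor coefficients via a local trivialization of $\Gr_k(TM)$, its solvability is immediate. Step~3 is then a formal (i.e.\ symbol-level) translation of the classical, coordinate-free statement that the Lie flag of a smooth distribution depends intrinsically on the distribution and not on the choice of frame.
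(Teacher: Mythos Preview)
Your proof is correct and follows essentially the same route as the paper: both reduce the claim to the classical fact that the Lie flag of an honest distribution is independent of the choice of frame, after passing to holonomic representatives. Your Step~2 makes explicit the construction of a change-of-frame matrix $A\in GL(k)$ matching the two frame jets over the common distribution jet---a point the paper invokes only implicitly (and with a small slip, writing ``$G=\tilde G$'' where the agreement is really at the level of $\pi^{r-1}$); otherwise the arguments coincide.
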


\begin{proof}
Any jet $F \in J^{r-1}(\overline{\bigoplus_k TM})$ over $p$ can be regarded as the equivalence class of sections which coincide up to the $r-1$-order information over the point $p$. Take two sections $X=\lbrace X_1,\cdots, X_k\rbrace: M \to TM$ and $\tilde X=\lbrace \tilde{X}_1,\cdots, \tilde{X}_k\rbrace: M \to TM$ realizing, over $p$, the jets $G=(G_i)_{i=1}^k$ and $\tilde G=(\tilde G)_{i=1}^k$, respectively; i.e. so that
\[j^{r-1}(X)=G \quad\text{and}\quad j^{r-1}(\tilde{X})=\tilde{G}\quad\text{over }p.\]

It is well known that the Lie flag of a smooth distribution does not depend on the choice of frame. Therefore, since $X$ and $\tilde{X}$ coincide up to order-$r-1$ at $p$ (because $G=\tilde{G}$), then they have the same Lie flag over $p$. But since $j^{r-1}(X)=G$, the Lie flag produced by $j^{r-1}(G)$ over $p$ coincides with the one produced by $G$ (and so is the case for $j^{r-1}(\tilde X)$ and $\tilde G$, respectively).

 Putting everything together, we conclude that the Lie flag produced by $G$ coincides with the one produced by $\tilde{G}$, yielding the claim.
\end{proof}

\begin{remark}
Lemma \ref{independenceLieFlagFrame} implies that any given $F\in\mathcal{R}^{\text{step-}r}$ has a well defined associated Lie flag. Indeed, any two lifts $G, \tilde{G}\in\mathcal{S}^{\text{step-}r}$ with $\pi^r(G)=\pi^r(\tilde G)=F$ define the same formal Lie flag by Lemma \ref{independenceLieFlagFrame}. This gives raise to the following Definition.
\end{remark}

\begin{definition}
We define the (formal) \textbf{Lie flag} associated to $F\in\mathcal{R}^{\text{step-}r}$ as the Lie flag associated to any of its lifts; i.e. as the Lie flag associated to any $\tilde{F}\in\mathcal{S}^{\text{step-}r}$ with $\pi^r(\tilde{F})=F$.
\end{definition}

Let us finish this Subsection with the following Lemma, which is an immediate consequence of Lemma \ref{InvarianceCoordinatesBrackets}. It shows that the differential relation $ \mathcal{S}^{\text{step-}r}$ does not depend on the choice of coordinates.

\begin{lemma}\label{DiffInvariance}
The differential relation $\mathcal{S}^{\step -r}$ is Diff(M) invariant.
\end{lemma}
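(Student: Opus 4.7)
The plan is to deduce $\Diff(M)$-invariance of $\mathcal{S}^{\step-r}$ as an immediate consequence of the pushforward-equivariance of formal Lie brackets (Lemma \ref{InvarianceCoordinatesBrackets}) combined with the observation that $f_\ast$ is a fibrewise linear isomorphism for any diffeomorphism $f$. The condition defining $\mathcal{S}^{\step-r}$ is purely expressed in terms of dimensions of the subspaces $\SD_i(F)$, and these subspaces are spanned by formal brackets. Hence, if brackets are preserved (up to applying $f_\ast$) and $f_\ast$ preserves dimensions of linear subspaces, the relation is preserved.

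More concretely, let $f\in \Diff(M)$ and take $F=(F_1,\dots,F_k)\in\mathcal{S}^{\step-r}$ over a point $p\in M$. By definition $\dim\SD_i(F)=\mathfrak{n}_i$ for $i=1,\dots,r$, where $(\mathfrak{n}_i)_{i=1}^r$ is the maximal growth vector. First I would pass to local charts around $p$ and $f(p)$ so that Lemma \ref{InvarianceCoordinatesBrackets} applies literally (the Remark following that Lemma guarantees the intrinsic meaning on manifolds). Then, for every multi-index $I\in\mathfrak{I}_i$,
\[
A_I(f_\ast F)=f_\ast A_I(F).
\]
Taking spans over all $I\in\mathfrak{I}_i$, this gives
\[
\SD_i(f_\ast F)=\bigl\langle\mathfrak{Br}_{f_\ast F}^{\,i}\bigr\rangle=\bigl\langle\{\,f_\ast A_I(F):I\in\mathfrak{I}_i\,\}\bigr\rangle=f_\ast\bigl\langle\mathfrak{Br}_F^{\,i}\bigr\rangle=f_\ast\SD_i(F).
\]
Since $f_\ast\colon T_pM\to T_{f(p)}M$ is a linear isomorphism, $\dim\SD_i(f_\ast F)=\dim\SD_i(F)=\mathfrak{n}_i$ for every $i$, so $f_\ast F\in\mathcal{S}^{\step-r}$.

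There is no real obstacle in this proof: the only bookkeeping point is that Lemma \ref{InvarianceCoordinatesBrackets} is stated in $\R^n$, so one needs to invoke the remark that the formal brackets, and hence the spaces $\SD_i(F)$, are intrinsically defined on any manifold via local charts. Everything else follows by a one-line dimension count from the fact that pushforward by a diffeomorphism is a fibrewise linear isomorphism.
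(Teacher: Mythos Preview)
Your proof is correct and follows exactly the approach indicated in the paper, which simply states that the result is an immediate consequence of Lemma~\ref{InvarianceCoordinatesBrackets}. You have merely made explicit the dimension count via the fibrewise linear isomorphism $f_\ast$, which is precisely the content implicit in the paper's one-line justification.
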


\subsection{Existence of formal distributions.}\label{FormalExistence}

We aim to make this exposition as self-contained as possible, so let us review some elementary notions from Lie group theory that will be relevant in the upcoming discussion.

\begin{definition}\label{StratifiedLieAlgebra}
A Lie algebra $\mathfrak{g}$ is called \textbf{graded} if it is equipped with a grading compatible with the Lie bracket. It has a decomposition in vector spaces as follows:
\[\mathfrak{g}=\oplus_{i=1}^r \mathfrak{g}_i, \quad \text{where}\quad [\mathfrak{g}_i,\mathfrak{g}_j]\subset\mathfrak{g}_{i+j}.\]

If, in addition, $\mathfrak{g}_1$ generates $\mathfrak{g}$ as an algebra, then we say that $\mathfrak{g}$ is a \textbf{stratified algebra}; i.e. we have that
\[
\mathfrak{g}=\oplus_{i=1}^r \mathfrak{g}_i, \quad \text{where}\quad [\mathfrak{g}_1,\mathfrak{g}_i]=\mathfrak{g}_{i+1}.
\] 
The integer $r$ is called the \textbf{step} of the algebra $\mathfrak{g}$.
\end{definition}

\begin{definition}\label{StratifiedLieGroup}
Similarly, we say that a Lie group $G$ is a \textbf{stratified group} of step-$r$ if it is simply connected with a step-$r$ stratified Lie algebra $\mathfrak{g}=\oplus_{i=1}^r\mathfrak{g}_i$. We denote by $e\in G$ its identity element and we identify $T_eG$ with $\mathfrak{g}$.
\end{definition} 

\begin{remark}\label{nilpotency}
Stratified Lie algebras are nilpotent and, thus, so are stratified Lie groups.
\end{remark}

\begin{remark}\label{DistributionLieGroup} Note that, on stratified groups, the degree-one layer $(\mathfrak{g}_1)_e\subset T_eG$ defines, by left translation, a left-invariant distribution $\SD\subset TG$. Also, by definition, $\mathfrak{g}_1$ generates the whole Lie algebra.
\end{remark}

\begin{remark}\label{BCHformula}

The Baker-Campbell-Hausdorff formula is a well known result in the Theory of Lie groups (see, for instance, \cite[1.3]{R}). Consider a Lie algebra $\mathfrak{g}$ with corresponding connected Lie group $G$ and exponential map $\exp:\mathfrak{g}\to G$. Given two elements $X, Y\in\mathfrak{g}$, this formula is a formal series (not necessarily convergent):
\begin{equation}
Z(X,Y)=X + Y + \frac{1}{2} [X,Y] + \frac{1}{12} [X, [X,Y]] - \frac{1}{12} [Y, [X,Y]]-\frac{1}{24}[Y, [X, [X,Y]]]+\cdots
\end{equation}

such that, if it converges, then it produces the element $Z\in\mathfrak{g}$ solving the equation $\exp(X)\exp(Y)=\exp(Z)$.

Note that in the case of nilpotent Lie algebras and groups, the  Baker-Campbell-Haussdorff formula only contains a finite number of non-zero terms and thus becomes a closed finite expression. This readily allows to identify $\mathfrak{g}$ with $G$, via this formula, and $\mathfrak{g}$ is thus endowed with a Lie group structure by inheriting the group structure of $G$.
\end{remark}

The following notion is the analog of the Nilpotentisation (recall Definition \ref{Nilpotentisation}) in the formal setting.

\begin{definition}\label{MaxGrNilp}

Let $M$ be a smooth $n$-manifold. Consider a flag of subbundles of $TM$
\begin{equation*}\label{flag}
\SD_1\subset\cdots\subset\SD_{r-1}\subset\SD_r\subseteq TM.
\end{equation*}

together with a fibrewise stratified Lie algebra structure of step-$r$ (for some fibrewise Lie bracket) on
\begin{equation*}\label{GradedVectorBundle}
\SD_1 \oplus \SD_2/\SD_1 \oplus \cdots \oplus \SD_i/\SD_{i-1} \oplus \cdots \oplus \SD_r/\SD_{r-1}. 
\end{equation*} 

Such a fibrewise Lie algebra structure is called a \textbf{formal nilpotentisation} of step-$r$. If, additionally, each dimension $\dim\left(\SD_{i}/\SD_{i-1})\right)$ coincides with the $i$-th entry of a maximal growth vector for ambient dimension $n$ (in particular implying $\SD_r=TM$), we then say that the formal nilpotentisation is of \textbf{maximal growth}.\end{definition}

The next result and its proof are due to Álvaro del Pino. It states that the existence of a formal nilpotentisation of maximal growth on a smooth manifold implies the existence of a formal distribution of maximal growth.

\begin{lemma}\label{ExistenceLieAlgebraImpliesFormalDistribution}
If $M$ admits a step-$r$ formal nilpotentisation of maximal growth then it admits a formal distribution of $\step$-$r$.
\end{lemma}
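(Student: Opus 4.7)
The plan is to construct, pointwise over each $p \in M$, a formal $(r-1)$-jet whose associated formal Lie flag realises the given fibrewise stratified Lie algebra structure, and then to verify that these local models assemble into a smooth section $s : M \to \mathcal{R}^{\step -r}$. First I would pick a smooth splitting $TM = V_1\oplus V_2\oplus\cdots\oplus V_r$ with $V_1\oplus\cdots\oplus V_i = \SD_i$ for every $i$. Under this splitting, the graded bundle $\SD_1\oplus\SD_2/\SD_1\oplus\cdots\oplus\SD_r/\SD_{r-1}$ is canonically identified with $TM$, so the hypothesised fibrewise stratified Lie bracket turns each fiber $T_pM$ into a step-$r$ stratified Lie algebra $\mathfrak{g}_p$. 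In particular $\mathfrak{g}_p$ is nilpotent (Remark \ref{nilpotency}).

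Next I would apply the Baker--Campbell--Hausdorff formula (Remark \ref{BCHformula}) fibrewise: since the bracket on $T_pM$ is nilpotent of step $r$, the BCH series truncates to a polynomial map $Z_p : T_pM \times T_pM \to T_pM$, turning $(T_pM,Z_p)$ into a simply connected step-$r$ stratified Lie group $G_p$ with identity $0\in T_pM$ and Lie algebra $\mathfrak{g}_p$. By Remark \ref{DistributionLieGroup}, the degree-one layer $(\SD_1)_p\subset T_0G_p$ extends by left translation to a left-invariant $k$-distribution $\wt{\SD}^{\,p}$ on $G_p$; since the bracket of left-invariant vector fields recovers the Lie algebra bracket, the stratified condition $[\mathfrak{g}_1,\mathfrak{g}_i]=\mathfrak{g}_{i+1}$ in Definition \ref{StratifiedLieAlgebra} forces the Lie flag of $\wt{\SD}^{\,p}$ at $0$ to equal $((\SD_i)_p)_{i=1}^r$, which by hypothesis has maximal growth dimensions.

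To transfer this model to $M$, I would fix any auxiliary Riemannian metric and use its exponential map to obtain, smoothly in $p$, local diffeomorphisms $\phi_p:(\Op(0)\subset T_pM,0)\to (\Op(p)\subset M,p)$ with $d_0\phi_p = \id$. Pushing forward $\wt{\SD}^{\,p}$ through $\phi_p$ yields a germ $\SD^{\,p}$ of $k$-distribution on $\Op(p)$, and I define
\[ s(p) := j^{r-1}_p(\SD^{\,p}) \in J^{r-1}(\Gr_k TM). \]
By the remark following Definition \ref{FormalDi}, the formal Lie flag of $s(p)$ coincides with the genuine Lie flag of $\SD^{\,p}$ at $p$; by $\phi_p$-invariance and left-invariance this equals the Lie flag of $\wt{\SD}^{\,p}$ at $0$, hence has maximal growth, so $s(p)\in\mathcal{R}^{\step -r}$. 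Smoothness of $s$ in $p$ is inherited from the smoothness of each ingredient (the splitting, the metric, the polynomial BCH product, and the exponential map), together with the diffeomorphism invariance recorded in Lemma \ref{InvarianceCoordinatesBrackets} and the independence statement of Lemma \ref{independenceLieFlagFrame}.

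The main obstacle, modest but worth singling out, is the need to upgrade the \emph{fibrewise} algebraic datum into a \emph{genuine germ of distribution} at each point whose derivatives encode it faithfully: this is precisely what BCH supplies, by producing a local Lie group whose left-invariant distribution realises the prescribed nilpotentisation in its $(r-1)$-jet at the identity. Once this pointwise model is in hand, the rest of the argument is a matter of choosing the local diffeomorphisms $\phi_p$ smoothly, which is standard.
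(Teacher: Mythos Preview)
Your proposal is correct and follows essentially the same route as the paper: identify each fibre $T_pM$ with a step-$r$ stratified Lie algebra via a choice of splitting, use the truncated Baker--Campbell--Hausdorff formula to endow it with a Lie group structure, take the left-invariant distribution generated by the first layer, transport to $M$ via the Riemannian exponential map, and read off the $(r-1)$-jet. The paper phrases the global smoothness as ``the construction works with parameters over a fine enough covering'' while you define $s(p)$ pointwise and argue smoothness of the ingredients, but these amount to the same thing.
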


\begin{proof}
Assume, by hypothesis, that we have a flag of subbundles
\begin{equation}\label{FlagNilp} \SD_1\subset\cdots\subset\SD_r=TM \end{equation}
whose corresponding graded vector space
 \begin{equation}\label{GradedVectorBundle2}
\SD_1 \oplus \SD_2/\SD_1 \oplus \cdots \oplus \SD_i/\SD_{i-1} \oplus \cdots \oplus \SD_r/\SD_{r-1}. 
\end{equation}
identifies fibrewise with a stratified Lie algebra $\mathfrak{g}=\oplus_{i=1}^r \mathfrak{g}_i$.

Take a point $p\in M$ and $T_pM\simeq \mathfrak{g}$. The Baker-Campbell-Haussdorff formula (Remark \ref{BCHformula}) identifies $\mathfrak{g}$ with the corresponding nilpotent Lie group $G$ which is stratified of step-$r$ (because $\mathfrak{g}$ was). We can thus consider the left-invariant distribution $\SD\subset TG$ associated to the first layer of the grading $\mathfrak{g}_1$ (recall Remark \ref{DistributionLieGroup}). By construction, the nilpotentisation associated to $\SD$ (recall Subsection \ref{sssec:nilpotentisation}) over any point of $G$ is isomorphic to the given one (\ref{GradedVectorBundle2}).

Fix a metric $g$ on $M$ and use the exponential map to identify $T_pM\simeq\mathfrak{g}$ with a small open neighborhood $\Op(p)\subset M$. We thus identify $\Op(p)\subset M$ with the Lie group $G$ and $p$ with $e\in G$. The $r-1$-jet  $j^{r-1}(\SD)$ defines a formal distribution over $\Op(p)$ with associated formal Lie flag (\ref{FlagNilp}). 

This construction clearly works with parameters. Therefore we can choose a fine enough covering of $M$ and carry out this process  over every open set in the covering simultaneuosly. We thus end up constructing a global formal distribution on $M$ with formal Lie flag (\ref{FlagNilp}).
\end{proof}

By means of the previous Lemma we can finally prove the main result in this Subsection.

\begin{proposition}[Parallelizability implies existence of a formal structure]\label{ExistenceFormal}
Let $M$ be an $n$-dimensional parallelizable manifold and fix $1<k<n$. Then $M$ admits a $k$-rank formal bracket-generating distribution of maximal growth.
\end{proposition}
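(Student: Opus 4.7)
The plan is to invoke Lemma \ref{ExistenceLieAlgebraImpliesFormalDistribution}: it suffices to produce on $M$ a step-$r$ formal nilpotentisation of maximal growth (Definition \ref{MaxGrNilp}), where $r$ is the step of the target maximal growth vector for the pair $(k,n)$, i.e.\ the smallest integer with $\sum_{i=1}^{r} d_{k,i} \geq n$ (so that the target layer dimensions are $\mathfrak{n}_i - \mathfrak{n}_{i-1} = d_{k,i}$ for $i<r$ and $\mathfrak{n}_r - \mathfrak{n}_{r-1} = n - \sum_{i=1}^{r-1}d_{k,i}$, using Proposition \ref{gvMax}).

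First I would build a \emph{model} stratified Lie algebra $\mathfrak{g} = \mathfrak{g}_1 \oplus \cdots \oplus \mathfrak{g}_r$ with these prescribed layer dimensions. The natural candidate is the free nilpotent Lie algebra $\mathfrak{f}_k^r$ of step $r$ on $k$ generators, whose graded dimensions are exactly $(d_{k,1},\ldots,d_{k,r})$ (Subsection \ref{subsection:Hall}). Its top layer $(\mathfrak{f}_k^r)_r$ is central, so \emph{any} linear subspace there is automatically a graded ideal. Taking any subspace $I \subset (\mathfrak{f}_k^r)_r$ of dimension $\sum_{i=1}^{r} d_{k,i} - n \geq 0$ and setting $\mathfrak{g} := \mathfrak{f}_k^r/I$ yields a stratified Lie algebra of step $r$ with the correct graded dimensions. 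Stratification is preserved because $\mathfrak{g}_1$ inherits the property of generating the whole algebra from freeness of $\mathfrak{f}_k^r$, and the choice of $r$ guarantees $I \subsetneq (\mathfrak{f}_k^r)_r$, so $\mathfrak{g}_r \neq 0$.

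Next I would use parallelizability to globalise this algebraic model. Fix a global trivialisation $TM \cong M \times V$ with $\dim V = n$, and a linear flag $V_1 \subset V_2 \subset \cdots \subset V_r = V$ inside $V$ identified with the flag $\mathfrak{g}_1 \subset \mathfrak{g}_1 \oplus \mathfrak{g}_2 \subset \cdots \subset \mathfrak{g}$ after choosing linear complements. Setting $\SD_i := M \times V_i$ yields a flag of subbundles with the required ranks, and the associated graded bundle is canonically isomorphic to the trivial bundle $M \times \mathfrak{g}$; transporting the fibrewise Lie bracket of $\mathfrak{g}$ along this trivialisation produces a smooth, fibrewise stratified Lie bracket of step $r$ on $\SD_1 \oplus \SD_2/\SD_1 \oplus \cdots \oplus \SD_r/\SD_{r-1}$. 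By construction the layer dimensions match the maximal growth vector, so this is a formal nilpotentisation of maximal growth in the sense of Definition \ref{MaxGrNilp}.

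Applying Lemma \ref{ExistenceLieAlgebraImpliesFormalDistribution} then gives a formal distribution of step $r$ on $M$, which is of maximal growth by Definition \ref{formalBG}. The only mildly delicate point is the bookkeeping in the first step, namely confirming the existence of a stratified Lie algebra with the \emph{exact} prescribed layer dimensions; I expect this to be the main technical content and it is handled cleanly by the quotient-of-free-nilpotent construction above, which relies crucially on centrality of the top layer to make the chosen subspace automatically an ideal. The globalisation step is essentially trivial thanks to parallelizability, which is precisely where the hypothesis is used.
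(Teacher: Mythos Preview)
Your proposal is correct and follows essentially the same approach as the paper: reduce to Lemma \ref{ExistenceLieAlgebraImpliesFormalDistribution}, pick a model stratified Lie algebra with the right layer dimensions, and use parallelizability to transport it fibrewise onto $TM$. The only difference is that the paper simply asserts ``choose any stratified Lie algebra $\mathfrak{g}$ with the prescribed graded dimensions'' without further justification, whereas you explicitly construct one as a quotient of the free step-$r$ nilpotent algebra on $k$ generators by a graded ideal sitting in the (central) top layer---this makes your argument strictly more complete on that point.
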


\begin{proof}

By Lemma \ref{ExistenceLieAlgebraImpliesFormalDistribution} it suffices to show that $M$ admits a formal nilpotentisation of maximal growth. Denote by $(\mathfrak{n}_i)_{i=1}^r$ a maximal growth vector with $\mathfrak{n}_1=k, \mathfrak{n}_r=n$.

Choose any stratified Lie algebra $\mathfrak{g}=\oplus^r_{i=1}\mathfrak{g}_i$ so that $\dim(\mathfrak{g}_i)=\mathfrak{n}_i$. By the parallelizability of $M$, trivialise $TM$ and thus consider a fibrewise identification of $TM$ with $\mathfrak{g}$; i.e. we have fibrewise linear isomorphisms $i_p:\mathfrak{g} \to T_pM$, varying smoothly with $p\in M$. We can just translate the stratification of $\mathfrak{g}$ onto $TM$. In other words,  we can consider the pushforward $(\SD_i)_p:=(i_p)_{*}(\mathfrak{g}_i)$ of each $\mathfrak{g}_i$ and readily translate the stratified Lie Algebra structure of $\mathfrak{g}$ onto $TM$. 

We thus produce a flag of subbundles $\SD_1\subset\cdots\subset\SD_{r-1}\subset TM$ whose fibrewise associated graded vector bundle \begin{equation*}
\SD_1 \oplus \SD_2/\SD_1 \oplus \cdots \oplus \SD_i/\SD_{i-1} \oplus \cdots \oplus \SD_{r}/\SD_{r-1}
\end{equation*}
inherits the stratified Lie algebra structure of $\mathfrak{g}$ where each $\SD_i$ identifies with $\mathfrak{g}_i$. Therefore $M$ admits a formal nilpotentisation of maximal growth and, by Lemma \ref{ExistenceLieAlgebraImpliesFormalDistribution}, a formal distribution of maximal growth.
\end{proof}

\section{$h$-principle for distributions of maximal growth}\label{hprincipleMG}

The goal of this last Section is to prove the following Theorem, which will imply the main Theorem \ref{mainthm} in this article.

\begin{theorem}\label{AmplenessR}
Let $M$ be a smooth manifold of dimension $n$. The differential relation $\mathcal{R}^{\text{step-}r}\subset J^{r-1}(\Gr_k (TM))$ is ample for $k> 2$.
\end{theorem}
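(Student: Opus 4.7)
The plan is to first reduce to the frame level: show that $\mathcal{S}^{\text{step-}r}\subset J^{r-1}(\overline{\bigoplus_k TM})$ is ample in principal directions, and then transfer ampleness to $\mathcal{R}^{\text{step-}r}$ through the bundle submersion $\pi^{r-1}$, whose fibres are linear. Working in a local chart around $p\in M$, fix a principal direction $\tau=\ker(dt)$ and a base jet $F_0\in\mathcal{S}^{\text{step-}r}$. By Lemma \ref{SplittingDt} the principal subspace $\Pr^{r-1}_{\tau,F_0}$ is the affine space parametrized by the pure order-$(r-1)$ derivatives $v=(v_1,\ldots,v_k)\in(\R^n)^k$, where $v_m=u_{m,(t,\ldots,t)}$, while every other jet-coordinate of $F_0$ is held fixed.

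Since length-$\ell$ formal brackets depend only on the order-$(\ell-1)$ jet (Lemma \ref{DependanceJet}), the first $r-1$ layers $\SD_1(F),\ldots,\SD_{r-1}(F)$ of the formal Lie flag are constant on $\Pr^{r-1}_{\tau,F_0}$. Hence the only condition defining $\mathcal{S}^{\text{step-}r}$ that varies inside the principal subspace is $\SD_r(F)=T_pM$. A direct inductive computation from Lemma \ref{LemmaSymbolBrackets}, combined with Lemma \ref{antisymmetry}, shows that each length-$r$ bracket depends \emph{affinely} on $v$, and only through its two innermost indices:
\begin{equation*}
A_{(a_r,\ldots,a_1)}(F) \;=\; A_{(a_r,\ldots,a_1)}(F_0) \;+\; c_{(a_r,\ldots,a_2)}\, v_{a_1} \;-\; c_{(a_r,\ldots,a_3,a_1)}\, v_{a_2},
\end{equation*}
where $c_{(b_{r-1},\ldots,b_1)}:=u_{b_{r-1}}^t(F_0)\cdots u_{b_1}^t(F_0)$ is a scalar depending only on the fixed base jet. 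The restriction $\mathcal{S}^{\text{step-}r}\cap\Pr^{r-1}_{\tau,F_0}$ is therefore cut out by a matrix-valued affine condition on $v$: the vectors $\{A_I(F)\bmod\SD_{r-1}: I\in\mathcal{V}_r\}$ must span $T_pM/\SD_{r-1}$.

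Assembling those vectors into a matrix $M(v)$ with respect to a fixed basis of $T_pM/\SD_{r-1}$ identifies $\mathcal{S}^{\text{step-}r}\cap\Pr^{r-1}_{\tau,F_0}$ with a $GL$-type open locus in an affine space, which is precisely the situation addressed by Lemma \ref{LemmaGL}. The crucial input is then the hypothesis $k>2$: Lemma \ref{HallBasisElements} supplies Hall length-$r$ brackets such as $\mathrm{ad}_{F_j}^{r-1}(F_i)$ for several distinct pairs $i\neq j$, and the affine formula above shows that each variable $v_m$ enters independently and non-trivially into several of these rows of $M(v)$. This redundancy lets one emulate the convex-combination trick from the proof of Lemma \ref{LemmaGL}: given any $v$ in one path-component of the full-rank locus, perturb along the independent directions contributed by two different Hall brackets to express $v$ as the midpoint of two points in the other path-component. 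Packaging exactly when an affine matrix-valued condition of this kind is ample is the content of the new ampleness criterion announced in Subsection \ref{Red1}, and the bulk of the argument consists in verifying its hypotheses for $M(v)$.

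The main obstacle is combinatorial: one must choose a sub-collection of Hall length-$r$ brackets and a complement to $\SD_{r-1}$ in $T_pM$ so that the relevant minor of $M(v)$ becomes an affine $GL$-valued function of some $v_m$ that is sufficiently independent of the remaining rows to support the convex-combination swap without disturbing the previously fixed layers. The hypothesis $k>2$ is sharp: when $k=2$, a single length-$r$ Hall bracket (together with its antisymmetric partner) carries all of the top-order derivative data, so the complement of the relation in the principal subspace collapses to a hyperplane with two convex connected components and no $GL$-style swap exists, which is precisely the mechanism underlying the non-ampleness asserted in Theorem \ref{NonAmplenessRank2}.
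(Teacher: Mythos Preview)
Your outline has the right overall shape (reduce to $\mathcal{S}^{\text{step-}r}$, then analyse a matrix rank condition), but there are two genuine gaps.

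First, ampleness in principal directions (Definition~\ref{def:ampleness1}) requires that \emph{every} projection $\pi^{r-1}_{i-1}(\mathcal{S}^{\text{step-}r})$ be ample, for $i=1,\ldots,r$. You only treat the top principal subspace $\Pr^{r-1}_{\tau,F_0}$. At order $i-1<r-1$ the varying data are the pure $(i-1)$-derivatives and the condition at stake is $\dim\SD_i=\mathfrak{n}_i$ with $\mathfrak{n}_i<n$; the paper handles this separately (case $i\neq r$ in the proof of Theorem~\ref{SAmple}) via Lemma~\ref{MatricesThin}, showing the complement is thin. This intermediate step is absent from your argument.

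Second, and more seriously, you misread the content of Subsection~\ref{Red1}. The ``new criterion'' is not a statement about when an affine matrix-valued condition is ample; it is the fibered criterion (Proposition~\ref{AmplenessCriterion}) that lets one pass between $\mathcal{R}$ and $\mathcal{S}$ and, crucially, replace $F$ by \emph{any} lift $\tilde F$ with $\pi^{r-1}(\tilde F)=\pi^{r-1}(F)$ (Lemma~\ref{CondicionCriterio}). The paper exploits this freedom to choose a $\partial_t$-\emph{adapted} lift (Lemma~\ref{ExistenceAdapted}): then $u_1^t=1$ and $u_j^t=0$ for $j\geq 2$, your coefficients $c_{(\ldots)}$ collapse to $0$ or $1$, and the principal condition becomes literally ``$m_i$ fixed columns plus $k-1$ free columns must have rank $\mathfrak{n}_i$'' (Equation~(\ref{Sr})), to which Lemmas~\ref{MatricesGL}, \ref{MatricesThin}, \ref{MatricesTrivial} apply directly. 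Without the adapted frame your coefficients can degenerate---indeed they all vanish when $\partial_t\perp\SD_F$, a case you do not address (the paper disposes of it separately in Corollary~\ref{TrivialAmpleness})---and your ``emulate the convex-combination trick'' paragraph is not a proof: you would need to show that the affine map $v\mapsto M(v)$ is onto a space where the full-rank locus is ample, and with arbitrary $c$'s this is exactly what the adapted-frame reduction is designed to make transparent.
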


 In Subsection \ref{Red1} we present a new criterion to check ampleness of a Differential relation. This will constitute the first reduction step since it will allow us to reduce the problem of checking ampleness of  $\mathcal{R}^{\text{step-}r}$ to checking ampleness of  $\mathcal{S}^{\text{step-}r}$. On a second reduction step (Subsection \ref{Red2}) we will show that we can reduce the study of ampleness to checking ampleness just along some particular principal directions called \textit{non-normal}. In Subsection \ref{AdaptedFrames1} and \ref{AdaptedFrames2} we introduce and ellaborate on some particular models where checking ampleness becomes somehow tractable. Finally we carry out the argument for the proof of ampleness of $\mathcal{S}^{\text{step-}r}$ in Section \ref{AmplenessS}.

\subsection{A fibered criterion for ampleness. First reduction.}\label{Red1}

\subsubsection{Fibered criterion for ampleness}
There are several criteria to check ampleness of differential relations in certain contexts: thinness of its complement (Lemma \ref{thinSubsets}, see also \cite[18.4.2]{EM}), ampleness criterion for $A$-directed immersions (\cite[19.1.1]{EM}), elliptic operators $\mathcal{D}$ having $\rank\geq 2$ for relations defined by linear independance of systems of $\mathcal{D}-$sections (\cite[pp. 181-182]{Gro86}, \cite[20.4.1]{EM}), etc. Once any of these conditions is checked, one can directly invoke the Theorem of Convex integration (Theorem \ref{thm:convexIntegration1}) and the complete $h$-principle follows.

The goal of this subsection is to present a criterion of such type (that to our knowledge has not been explained elsewhere) that shows that ampleness of certain differential relations reduces, in a precise sense, to ampleness of some other differential relations fibering over them. 
As we will show in the next Section via a concrete example, checking ampleness of these auxilliary relations can in some cases be easier, thus showing that the criterion we present is not void.

The philosophical idea behind this criterion is as follows: in order to prove ampleness of a differential relation in $J^r(X)$, one may find easier to work with an auxiliary space $Y$ (which may play the role of being the space of frames/coordinates/additional structure of the space $X$). In particular, this allows to check (in a precise sense) ampleness of a certain Diff(M) invariant differential relation by just checking it for certain choice of local coordinates.

Let us fix the general framework. Consider $p:Y\to M$, $\rho:X\to M$ two smooth fiber bundles over the same base manifold and $\pi: Y\to X$ a surjective and submersive bundle map (that can often be thought as a projection or a quotient map). We thus have the induced map $\pi^r:J^r(Y)\to J^r(X)$. Consider an open differential relation $\mathcal{S}\subset J^r(Y)$ and and open differential relation $\mathcal{R}\subset J^r(X)$ so that the former is the pullback of the latter; i.e. ${\mathcal{S}:=(\pi^r})^*(\mathcal{R})$. We then have that $\pi^r\left(\mathcal{S}\right)=\mathcal{R}$ where fibers are mapped to fibers. This is summed up by the commutative diagram in Figure \ref{diagram}.

\begin{figure}[h]
\begin{center}
\begin{tikzpicture}[x=0.75pt,y=0.75pt,yscale=-1,xscale=1]

\draw    (84.13,132.83) -- (110.41,159.96) ;
\draw [shift={(111.8,161.4)}, rotate = 225.92] [color={rgb, 255:red, 0; green, 0; blue, 0 }  ][line width=0.75]    (10.93,-4.9) .. controls (6.95,-2.3) and (3.31,-0.67) .. (0,0) .. controls (3.31,0.67) and (6.95,2.3) .. (10.93,4.9)   ;
\draw    (172.27,72.43) -- (172.58,104.6) ;
\draw [shift={(172.6,106.6)}, rotate = 269.44] [color={rgb, 255:red, 0; green, 0; blue, 0 }  ][line width=0.75]    (10.93,-4.9) .. controls (6.95,-2.3) and (3.31,-0.67) .. (0,0) .. controls (3.31,0.67) and (6.95,2.3) .. (10.93,4.9)   ;
\draw    (161.93,132.6) -- (134.03,160) ;
\draw [shift={(132.6,161.4)}, rotate = 315.53] [color={rgb, 255:red, 0; green, 0; blue, 0 }  ][line width=0.75]    (10.93,-4.9) .. controls (6.95,-2.3) and (3.31,-0.67) .. (0,0) .. controls (3.31,0.67) and (6.95,2.3) .. (10.93,4.9)   ;
\draw    (72.27,72.43) -- (72.58,104.6) ;
\draw [shift={(72.6,106.6)}, rotate = 269.44] [color={rgb, 255:red, 0; green, 0; blue, 0 }  ][line width=0.75]    (10.93,-4.9) .. controls (6.95,-2.3) and (3.31,-0.67) .. (0,0) .. controls (3.31,0.67) and (6.95,2.3) .. (10.93,4.9)   ;
\draw    (99.4,122.2) -- (145,122.58) ;
\draw [shift={(147,122.6)}, rotate = 180.48] [color={rgb, 255:red, 0; green, 0; blue, 0 }  ][line width=0.75]    (10.93,-4.9) .. controls (6.95,-2.3) and (3.31,-0.67) .. (0,0) .. controls (3.31,0.67) and (6.95,2.3) .. (10.93,4.9)   ;
\draw    (99.8,54.2) -- (145.4,54.2) ;
\draw [shift={(147.4,54.2)}, rotate = 180.48] [color={rgb, 255:red, 0; green, 0; blue, 0 }  ][line width=0.75]    (10.93,-4.9) .. controls (6.95,-2.3) and (3.31,-0.67) .. (0,0) .. controls (3.31,0.67) and (6.95,2.3) .. (10.93,4.9)   ;

\draw (67,115.4) node [anchor=north west][inner sep=0.75pt]    {$Y$};
\draw (164.67,113.6) node [anchor=north west][inner sep=0.75pt]    {$X$};
\draw (113.8,164.8) node [anchor=north west][inner sep=0.75pt]    {$M$};
\draw (56.4,49) node [anchor=north west][inner sep=0.75pt]    {$J^{r}( Y)$};
\draw (152.27,49) node [anchor=north west][inner sep=0.75pt]    {$J^{r}( X)$};
\draw (158.07,143.53) node [anchor=north west][inner sep=0.75pt]  [font=\small]  {$\rho $};
\draw (77.07,145.2) node [anchor=north west][inner sep=0.75pt]  [font=\small]  {$p$};
\draw (115.27,109) node [anchor=north west][inner sep=0.75pt]  [font=\small]  {$\pi $};
\draw (114.87,38) node [anchor=north west][inner sep=0.75pt]  [font=\small]  {$\pi ^{r}$};
\draw (27.4,49) node [anchor=north west][inner sep=0.75pt]    {$\mathcal{S} \subset $};
\draw (189.4,49) node [anchor=north west][inner sep=0.75pt]    {$\supset \mathcal{R}$};

\end{tikzpicture}

\end{center}
\caption{Submersive bundle map $\pi:Y\to X$ between smooth fiber bundles $Y\to M$ and $X\to M$ so that the associated $r$-jet extension $\pi^r$ maps the differential relation $\mathcal{S}$ into $\mathcal{R}$. The map $\pi$ can often be thought as a quotient map.}\label{diagram}
\end{figure}

\begin{lemma}\label{ExistenceLift}
Every $F\in J^r(X)$ has a lift $\tilde{F}\in J^r(Y)$ so that $\pi^r(\tilde{F})=F$.
\end{lemma}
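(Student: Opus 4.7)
The plan is to construct a genuine local section of $Y \to M$ whose $r$-jet at the base point maps to $F$ under $\pi^r$; since $\pi^r$ is defined via composition with $\pi$, it suffices to find a local section of $\pi$ itself through an appropriate point.

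First I would unpack $F$: write $p := \pi^r_{bs}(F) \in M$ and pick any local section $s : \Op(p) \to X$ of $\rho$ with $j^r_p s = F$. Such a section exists because $X \to M$ is a smooth fiber bundle, so every point of $J^r(X)$ is represented by a germ of section.

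Next I would lift $s$ pointwise. Because $\pi: Y \to X$ is surjective, the fiber $\pi^{-1}(s(p)) \subset Y$ is non-empty; pick any $y$ in it. Because $\pi$ is a submersion, the local submersion theorem (equivalently, the fact that any surjective submersion admits smooth local sections through any preimage point) provides an open neighborhood $V \subset X$ of $s(p)$ and a smooth section $\sigma: V \to Y$ of $\pi$ with $\sigma(s(p)) = y$. Shrinking $\Op(p)$ if necessary, I may assume $s(\Op(p)) \subset V$.

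Now define $\tilde s := \sigma \circ s : \Op(p) \to Y$. Since $\pi$ is a bundle map over $M$, i.e.\ $\rho \circ \pi = p$, we get $p \circ \tilde s = \rho \circ \pi \circ \sigma \circ s = \rho \circ s = \id_{\Op(p)}$, so $\tilde s$ is a section of $Y \to M$. Setting $\tilde F := j^r_p \tilde s \in J^r(Y)$, the naturality of the jet functor under composition yields
\[
\pi^r(\tilde F) \;=\; \pi^r\bigl(j^r_p \tilde s\bigr) \;=\; j^r_p(\pi \circ \tilde s) \;=\; j^r_p(\pi \circ \sigma \circ s) \;=\; j^r_p s \;=\; F,
\]
as required. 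The only non-formal step is the existence of a local section $\sigma$ of $\pi$ through a prescribed point, which is immediate from $\pi$ being a surjective submersion and is thus the mildest of ``obstacles''.
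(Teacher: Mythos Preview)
Your proof is correct and follows essentially the same idea as the paper's: both use that a surjective submersion admits local sections (equivalently, is locally a projection) to lift a representative section of $F$. The paper's version is terser, simply invoking the local projection model for $\pi$, while you spell out the construction of $\sigma$ and the composition $\tilde s = \sigma \circ s$ explicitly; the content is the same.
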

\begin{proof}
This follows by the surjectivity and the submersive condition of $\pi$. Indeed, choose a local model where $\pi$ is regarded, locally, as a projection map and thus the statement readily follows.
\end{proof}

The following Lemma shows that $\pi^r: J^r(Y)\to J^r(X)$ is locally an affine bundle that maps principal subspaces to principal subspaces.

\begin{lemma}\label{AffineFibration}
For any $q\in Y$ there exists an open neighborhood $\mathcal{U}\subset Y$ of $q$ such that:
\begin{itemize}
\item[i)] $\pi^r: J^r(\mathcal{U})\to J^r(\mathcal{V})$ is an affine bundle, where $\mathcal{V}:=\pi(\mathcal{U})$.
\item[ii)] Principal subspaces of the fibration $J^r(\mathcal{U})\to p(\mathcal{U})$ are preimages of the principal subspaces of $J^r(\mathcal{V})\to\rho(\mathcal{V})$: 
\[\Pr^r_{\tau,F}=(\pi^r)^{-1}\left(\Pr^r_{\tau,\pi^r(F)}\right).\]

\end{itemize}
\end{lemma}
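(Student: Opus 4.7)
The plan is to produce an adapted local model for $\pi$ using the submersion theorem and then read both statements off the coordinate description of $\pi^r$. Fix $q\in Y$ and use that $\pi:Y\to X$ is a surjective submersion compatible with the bundle projections $p,\rho$ to $M$. The parametric submersion theorem gives a neighbourhood $\mathcal{U}\ni q$ together with trivialisations $\mathcal{U}\simeq U\times F_X\times F_Y'$ and $\mathcal{V}:=\pi(\mathcal{U})\simeq U\times F_X$ over a common open $U\subset M$ in which $\pi$ becomes the projection $(x,a,b)\mapsto(x,a)$.

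For part (i), a local section of $\mathcal{U}\to U$ is then a pair $(a(x),b(x))$ with $\pi\circ(a,b)=a$. Hence $\pi^r$ sends the Taylor data $\{a^{(\alpha)}(p),b^{(\alpha)}(p)\}_{|\alpha|\leq r}$ over $p\in U$ to $\{a^{(\alpha)}(p)\}_{|\alpha|\leq r}$, which fibrewise is the projection of a product of affine jet spaces onto one factor. Therefore $\pi^r:J^r(\mathcal{U})\to J^r(\mathcal{V})$ is an affine bundle locally, with model fibre the jet space of $F_Y'$-valued maps.

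For part (ii), fix a hyperplane $\tau\subset T_pM$ and choose a transverse coordinate $t$ on $U$ so that $\tau=\ker(dt)$. The splitting $j^r=j^{\bot(\tau,r)}\oplus\partial_t^r$ from Subsection \ref{ssec:principalSubspaces} is defined purely by the direction $\partial_t$ on $M$, while the coordinate form of $\pi^r$ produced above is the pointwise projection that forgets the $b$-factor and therefore acts on the $a$- and $b$-Taylor data independently. This shows that $\pi^r$ intertwines the affine projections $\pi^r_{\bot(\tau,r)}:J^r\to J^{\bot(\tau,r)}$ upstairs and downstairs; identifying the corresponding fibres then yields the claimed equality $\Pr^r_{\tau,F}=(\pi^r)^{-1}(\Pr^r_{\tau,\pi^r(F)})$.

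The substantive step is really the production of the adapted coordinates in part (i): once $\pi$ has been put into product form, part (ii) is a naturality check of the hyperplane splitting under fibrewise linear projections. I do not anticipate any obstacle beyond bookkeeping with multi-indices and unwinding the definition of a principal subspace as a fibre of $\pi^r_{\bot(\tau,r)}$.
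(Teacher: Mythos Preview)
Your proposal is correct and follows essentially the same approach as the paper: choose local trivialisations in which $\pi$ becomes (after the submersion theorem) a linear projection between fibre coordinates, identify $J^r$ with the product of symmetric polynomial spaces, and then read off both the affine-bundle structure and the compatibility with principal subspaces from this factor-by-factor description. The only cosmetic difference is that you split the $Y$-fibre as $F_X\times F_Y'$ so that $\pi$ is literally a projection, whereas the paper keeps $\mathbb{R}^m\to\mathbb{R}^{\tilde m}$ as a submersion; your choice makes the ``$\pi^r$ forgets the $b$-data'' statement slightly cleaner, and your justification of part (ii) via intertwining of the $\bot(\tau,r)$-projections is a bit more explicit than the paper's one-line ``direct consequence of (i)''.
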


\begin{proof}
For any point $q\in Y$ and $x:=\pi(q)\in X$, choose local charts $q\in\mathcal{U}\subset Y$, $x\in\mathcal{V}\subset X$, so that both fibrations are trivial; i.e. $\mathcal{U}\simeq\R^n\times\R^m$, $\mathcal{V}\simeq\R^n\times\R^{\tilde{m}}$ where $n=\dim(M)$ and the fiber $\R^m$ factor submers onto the fiber $\R^{\tilde{m}}$ factor via $\pi$.

We can thus extend these coordinates (recall Section \ref{ssec:jetsCoord}) to the level of $r$-jets as follows:

\begin{equation}\label{Fibers1}
J^r(Y)\supset J^r(\mathcal{U})\simeq J^r(\R^n\times \R^m)\simeq \R^n\times \R^m\times \Hom(\R^n, \R^m)\times\Sym^2(\R^n, \R^m)\times\cdots\times\Sym^r(\R^n, \R^m),
\end{equation}
\begin{equation}\label{Fibers2} 
J^r(X)\supset J^r(\mathcal{V})\simeq J^r(\R^n\times \R^{\tilde{m}})\simeq \R^n\times \R^{\tilde{m}}\times \Hom(\R^n, \R^{\tilde{m}})\times\Sym^2(\R^n, \R^{\tilde{m}})\times\cdots\times\Sym^r(\R^n, \R^{\tilde{m}}).
\end{equation}

 Therefore the induced map $\pi^r: J^r(\mathcal{U}) \buildrel{\pi^r}\over{\to} J^r(\mathcal{V}) $ maps each of the factors in the fibers accordingly; i.e. each factor in (\ref{Fibers1})  is mapped to its homologous factor in (\ref{Fibers2}).
We have that $\pi$ (which acts on the $j^0$-part of the fibers) is a trivial affine bundle (and note that $\pi$ is just the truncated part of $\pi^r$ acting on the first two factors $\R^n\times\R^m$ of the product; $\pi:\R^n\times\R^m\to\R^n\times\R^{\tilde{m}}$). The rest of the factors of the fibers are affine spaces which are clearly mapped affinely by $\pi^r$, thus yielding $i)$.
Once we have this local description of jet spaces as a product, $ii)$ is now just a direct consequence of $i)$.
\end{proof}

\begin{remark}\label{CriterionSmallerJets}
Note that if we restrict the whole construction to lower jet spaces $J^i(Y) \subset J^r(Y)$, $J^i(X) \subset J^r(Y)$, $i<r$, the same result follows for $i$-jets. Indeed, we have the projected relations $\pi^r_i(\mathcal{S})$, $\pi^r_i(\SR)$ and the map $\pi^i$ is just the truncated part of $\pi^r$ acting on $i$-jets. Therefore, the same proof applies verbatim in this context.
\end{remark}

As a consequence of Lemma \ref{AffineFibration}, we can establish the following Lemmas.

\begin{lemma}\label{FC1}
Consider $F\in J^r(X)$ and a lift $\tilde{F}\in J^r(Y)$; i.e. so that $\pi^r(\tilde{F})=F$. Take a principal codirection $\tau$. Then, for any $i\leq r$, ampleness of $\mathcal{S}^i_{\tau, \tilde{F}}\subset \Pr^i_{\tau,\tilde{F}}$  is equivalent to ampleness of $\mathcal{R}^i_{\tau, F}\subset \Pr^i_{\tau, F}$.
\end{lemma}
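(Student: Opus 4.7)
The plan is to exploit the affine-bundle structure from Lemma \ref{AffineFibration} and Remark \ref{CriterionSmallerJets}, which makes the restriction $\pi^i|_{\Pr^i_{\tau,\tilde F}}: \Pr^i_{\tau,\tilde F} \to \Pr^i_{\tau,F}$ a surjective affine map of affine spaces; its fibers are affine subspaces and in particular path-connected. Since $\mathcal{S}$ is by hypothesis the pullback of $\mathcal{R}$, one has $\mathcal{S}^i_{\tau,\tilde F} = (\pi^i)^{-1}(\mathcal{R}^i_{\tau,F})$ on principal subspaces. Combined with path-connectedness of the fibers (which allows to lift paths and then connect endpoints vertically), this gives the identity
\[ (\mathcal{S}^i_{\tau,\tilde F})_{\tilde y} \;=\; (\pi^i)^{-1}\bigl((\mathcal{R}^i_{\tau,F})_y\bigr), \quad y := \pi^i(\tilde y), \]
and likewise the emptiness of $\mathcal{R}^i_{\tau,F}$ is equivalent to the emptiness of $\mathcal{S}^i_{\tau,\tilde F}$, taking care of the trivial ampleness cases on both sides.

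For the implication $\mathcal{S}^i_{\tau,\tilde F}$ ample $\Rightarrow$ $\mathcal{R}^i_{\tau,F}$ ample, I would push the convex hull forward. Since $\pi^i$ is affine it commutes with convex combinations, so
\[ \Pr^i_{\tau,F} \;=\; \pi^i\bigl(\Pr^i_{\tau,\tilde F}\bigr) \;=\; \pi^i\bigl(\Conv((\mathcal{S}^i_{\tau,\tilde F})_{\tilde y})\bigr) \;=\; \Conv\bigl(\pi^i((\mathcal{S}^i_{\tau,\tilde F})_{\tilde y})\bigr) \;\subset\; \Conv\bigl((\mathcal{R}^i_{\tau,F})_y\bigr), \]
where the last inclusion uses that $\pi^i((\mathcal{S}^i_{\tau,\tilde F})_{\tilde y})$ is a path-connected subset of $\mathcal{R}^i_{\tau,F}$ passing through $y$, hence contained in the path-component of $y$.

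For the converse, I would establish the auxiliary identity $\Conv(\pi^{-1}(A)) = \pi^{-1}(\Conv(A))$ for any surjective affine map $\pi: E \to B$ of affine spaces and any $A \subset B$. The inclusion $\subset$ is immediate since $\pi^{-1}(\Conv(A))$ is convex and contains $\pi^{-1}(A)$. For the reverse inclusion, choosing an affine section trivialises $\pi$ as $E \cong B \times K$, where $K$ is the kernel of its linear part, so that any $p = (b, k)$ with $b = \sum_j \lambda_j a_j$ is realised as the convex combination $p = \sum_j \lambda_j (a_j, k)$ of points in $\pi^{-1}(A)$. Applying this identity with $A = (\mathcal{R}^i_{\tau,F})_y$ and invoking ampleness of $\mathcal{R}^i_{\tau,F}$ yields $\Conv((\mathcal{S}^i_{\tau,\tilde F})_{\tilde y}) = \pi^{-1}(\Pr^i_{\tau,F}) = \Pr^i_{\tau,\tilde F}$.

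The main subtlety is the auxiliary commutation $\Conv \circ \pi^{-1} = \pi^{-1} \circ \Conv$ in the reverse direction; everything else is a transparent unpacking of the affine-bundle structure. I expect no further obstacle: the statement is essentially the observation that ampleness is a property that transfers cleanly across affine fibrations of principal subspaces, provided the relation upstairs is the pullback of the relation downstairs.
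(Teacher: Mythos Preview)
Your proof is correct and follows essentially the same approach as the paper: both exploit the affine-bundle structure of Lemma~\ref{AffineFibration} (and Remark~\ref{CriterionSmallerJets}) to transfer convex combinations back and forth, using a local trivialisation $E \cong B \times K$ for the converse direction. Your formulation is somewhat more careful in that you explicitly establish the correspondence between path-components via $(\mathcal{S}^i_{\tau,\tilde F})_{\tilde y} = (\pi^i)^{-1}\bigl((\mathcal{R}^i_{\tau,F})_y\bigr)$ and isolate the identity $\Conv(\pi^{-1}(A)) = \pi^{-1}(\Conv(A))$, whereas the paper argues more informally with convex combinations ``averaging'' the basepoints $F$ and~$\tilde F$.
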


\begin{proof}
By Lemma \ref{AffineFibration}, $\pi^r$ becomes an affine fibration when restricted to small enough open sets of $J ^r(Y)$ that also maps principal subspaces upstairs $\Pr^r_{\lambda,\tilde{F}}$ to principal subspaces downstairs $\Pr^r_{\lambda,F}$ (and also $\mathcal{S}^r_{\tau, \tilde{F}}$ upstairs to $\mathcal{R}^r_{\tau, F}$ downstairs). We then have that convex combinations within $\mathcal{S}^r_{\tau, \tilde{F}}$ upstairs averaging\footnote{We say that a convex combination $\sum_i \lambda_i p_i$ (where $\sum_i\lambda_i=1$) averages $F$ if  $\sum_i \lambda_i p_i=F$.} $\tilde{F}$ translate, via $\pi^r$, to the corresponding convex combinations in $\mathcal{R}^r_{\tau, F}$ downstairs averaging $F$. 

Conversely, we want to show that a convex combination $\sum_i \lambda_i p_i$ in $\mathcal{R}^r_{\tau, F}$ averaging $F$ (where $p_i\in \mathcal{R}^r_{\tau, F}$) can be lifted to a convex combination in $\mathcal{S}^r_{\tau, \tilde{F}}$ averaging $\tilde{F}$. By Lemma \ref{AffineFibration}, $\pi^r$ locally becomes, upon choosing small enough neighborhoods, a trivial affine fibration $\pi^r: J^r(\mathcal{V})\times W\to J^r(\mathcal{V})$, where $W$ denotes the affine fiber. Let  $J^r(\mathcal{V})\times\lbrace w\rbrace$ be the leaf containing $\tilde{F}$. It is clear then that the convex combination $\sum_i \lambda_i \tilde{p}_i$ does the job, where $\tilde{p}_i:= \lbrace p_i\rbrace\times\lbrace w\rbrace$.

Arguing in the same fashion for $\SR^i_{\tau, F}$ and $\Pr^i_{\lambda, F}$  (recall the notation from Remark \ref{NotationPr}), for $i\leq r$, the claim follows as well (Remark \ref{CriterionSmallerJets}).
\end{proof}

Let us state an obvious consequence of Lemma \ref{FC1}.

\begin{lemma}\label{CondicionCriterio}
Let $F, G\in J^r(Y)$ so that $\pi^r(F)=\pi^r(G)$. Then, for $1\leq i\leq r$, ampleness of $\mathcal{S}^i_{\tau, F}\subset \Pr^i_{\tau, F}$ is equivalent to ampleness of $\mathcal{S}^i_{\tau, G}\subset \Pr^i_{\tau, G}$ for any principal codirection $\tau$.
\end{lemma}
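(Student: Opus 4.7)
The statement is essentially a formal corollary of Lemma \ref{FC1}, so my plan is simply to chain two instances of that Lemma through a common downstairs element.

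The strategy is as follows. Since $\pi^r(F)=\pi^r(G)$, set $H:=\pi^r(F)=\pi^r(G)\in J^r(X)$. Then both $F$ and $G$ are lifts of $H$ in the sense of Lemma \ref{ExistenceLift}. Fix the codirection $\tau$ and an integer $1\leq i\leq r$.

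First I would apply Lemma \ref{FC1} to the pair $(H,F)$: this yields that ampleness of $\mathcal{S}^i_{\tau,F}\subset \Pr^i_{\tau,F}$ is equivalent to ampleness of $\mathcal{R}^i_{\tau,H}\subset \Pr^i_{\tau,H}$. Next, apply Lemma \ref{FC1} to the pair $(H,G)$: this gives that ampleness of $\mathcal{S}^i_{\tau,G}\subset \Pr^i_{\tau,G}$ is equivalent to ampleness of the same set $\mathcal{R}^i_{\tau,H}\subset \Pr^i_{\tau,H}$. Composing these two equivalences through the common middle term yields the claimed equivalence between ampleness at $F$ and ampleness at $G$.

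There is essentially no obstacle here, since all the work was done in proving Lemma \ref{FC1}: the key content (that $\pi^r$ locally identifies principal subspaces upstairs with principal subspaces downstairs via an affine fibration, and maps $\mathcal{S}$ onto $\mathcal{R}$) has already been established in Lemma \ref{AffineFibration} and exploited in Lemma \ref{FC1}. One should just note that Remark \ref{CriterionSmallerJets} ensures the argument works verbatim for all $i\leq r$, not only for $i=r$, so the statement follows at every jet level. Conceptually, the lemma just says that the ampleness of $\mathcal{S}$ along $\tau$ at a given jet depends only on the image jet downstairs in $J^r(X)$, which is an immediate reflection of the fact that $\mathcal{S}$ is pulled back from $\mathcal{R}$.
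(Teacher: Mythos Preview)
Your proposal is correct and matches the paper's approach exactly: the paper simply states this lemma as ``an obvious consequence of Lemma~\ref{FC1}'' without giving a separate proof, and your argument (applying Lemma~\ref{FC1} twice to the common image $H=\pi^r(F)=\pi^r(G)$) is precisely the intended justification.
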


We now state the ampleness-criterion.

\begin{proposition}[\textbf{Fibered ampleness-criterion}]\label{AmplenessCriterion}
Ampleness of $\mathcal{S}\subset J^r(Y)$ is equivalent to ampleness of $\mathcal{R}\subset J^r(X)$. 
\end{proposition}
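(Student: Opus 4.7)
The plan is to unpack the definition of ampleness in principal directions and to leverage the two auxiliary results (existence of lifts and the fibrewise equivalence of ampleness) that have just been established.

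Recall that by Definition \ref{def:ampleness1}, $\mathcal{R}$ is ample in principal directions if, for every $r' \in \{1,\dots,r\}$, every $F \in \pi^r_{r'}(\mathcal{R})$ over a point $p\in M$, and every non-zero $\lambda \in T_pM$, the slice $\mathcal{R}^{r'}_{\lambda,F} \subset \Pr^{r'}_{\lambda,F}$ is ample in the sense of Definition \ref{def:ampleAffine}. The same statement applies verbatim to $\mathcal{S}$, with lifts living upstairs in $J^r(Y)$. The point is that the projection $\pi^r : J^r(Y) \to J^r(X)$ commutes with the jet truncations $\pi^r_i$ and, by Lemma \ref{AffineFibration} (together with Remark \ref{CriterionSmallerJets}), it is fibrewise affine and sends principal subspaces at level $i$ to principal subspaces at level $i$ for every $i\leq r$. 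Hence one can check ampleness upstairs or downstairs slice by slice, at every order $i$.

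For the implication $\mathcal{S}$ ample $\Rightarrow$ $\mathcal{R}$ ample, I would fix any order $i\le r$, any $F \in \pi^r_i(\mathcal{R})$ over $p$, and any non-zero $\lambda \in T_pM$. Using Lemma \ref{ExistenceLift} applied to the relation $\pi^r_i(\mathcal{R})$ (which is itself the image of $\pi^r_i(\mathcal{S})$ under the truncated submersive map $\pi^i$), pick a lift $\tilde F \in \pi^r_i(\mathcal{S})$ with $\pi^i(\tilde F) = F$. By hypothesis $\mathcal{S}^i_{\lambda,\tilde F}$ is ample in $\Pr^i_{\lambda,\tilde F}$, and Lemma \ref{FC1} immediately translates this into ampleness of $\mathcal{R}^i_{\lambda,F}$ in $\Pr^i_{\lambda,F}$. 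Since $i$, $F$ and $\lambda$ were arbitrary, $\mathcal{R}$ is ample in principal directions.

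The reverse implication is even more direct: fix $i\le r$, $\tilde F \in \pi^r_i(\mathcal{S})$ over $p$, and a non-zero $\lambda\in T_pM$. Set $F := \pi^i(\tilde F) \in \pi^r_i(\mathcal{R})$; this is automatic from $\mathcal{S} = (\pi^r)^*(\mathcal{R})$ and the compatibility of $\pi^r$ with the jet projections. By hypothesis $\mathcal{R}^i_{\lambda,F}\subset\Pr^i_{\lambda,F}$ is ample, and applying Lemma \ref{FC1} in the opposite direction yields ampleness of $\mathcal{S}^i_{\lambda,\tilde F}\subset\Pr^i_{\lambda,\tilde F}$. This proves the equivalence.

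The only real subtlety, and the one I would double-check carefully when writing up, is that the argument is genuinely order-by-order: Lemma \ref{FC1} is stated for any $i\le r$, so the equivalence of ampleness propagates through every truncation $\pi^r_i$ simultaneously. Beyond this bookkeeping the proof is essentially a one-line invocation of Lemmas \ref{ExistenceLift} and \ref{FC1}, with no further combinatorial or analytical obstacle.
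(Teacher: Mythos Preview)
Your proposal is correct and follows essentially the same approach as the paper: both directions are obtained by invoking Lemma~\ref{ExistenceLift} (existence of lifts) and Lemma~\ref{FC1} (fibrewise equivalence of ampleness), applied order by order for every $i\leq r$ and every principal direction. The only cosmetic difference is that you lift at level $i$ directly, whereas the paper lifts at level $r$ and then truncates; this makes no substantive difference.
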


\begin{proof}

If $\mathcal{S}$ is ample, then for any codirection $\tau$, any $i\leq r$ and any $F\in J^r(X)$ there exists a lift $\tilde{F}\in J^r(Y)$ (Lemma \ref{ExistenceLift}) so that $\pi^r(\tilde{F})=F$ and $\mathcal{S}^i_{\tau, \tilde{F}}\subset \Pr^i_{\tau,\tilde{F}}$ is ample. Therefore, it follows by Lemma \ref{FC1} that, for any codirection $\tau$, any $i\leq r$ and any $F\in J^r(X)$,  $\mathcal{R}^i_{\tau, {F}}\subset \Pr^i_{\tau,{F}}$ is ample, thus yielding ampleness of $\mathcal{R}$.

Conversely, if $\SR$ is ample, then for any codirection $\tau$, any $i\leq r$ and any ${G}\in J^r(Y)$, we have that $\mathcal{R}^i_{\tau, \pi^r(G)}\subset \Pr^i_{\tau,\pi^r(G)}$ is ample. Then, by Lemma \ref{FC1}, we have that for any codirection $\tau$, any $i\leq r$ and any  $G\in J^r(Y)$ it follows that $\mathcal{R}^i_{\tau, G}\subset \Pr^i_{\tau, G}$ is ample. This yields ampleness of $\mathcal{S}$.
\end{proof}

We will apply the previous criterion for the case of a differential relation in the context of distributions on manifolds.

\subsubsection{Particularising to distributions.}

We will apply the Fibered Criterion in order to reduce ampleness of $\mathcal{R}^{\step -r}$ to some easier to check condition on the bundle of $k$-frames.
Recall (Section \ref{FBD}) that we have the surjective and submersive bundle map $\pi:\overline{\bigoplus_k TM}\to\Gr_k(TM)$ which maps each $k$-frame to the $k$-plane it spans and thus induces a map at the level of $r-$jets:

\begin{figure}[h]
\begin{center}
\begin{tikzpicture}[x=0.75pt,y=0.75pt,yscale=-1,xscale=1]

\draw    (226.27,72.43) -- (226.27,104.6) ; 
\draw [shift={(226.27,106.6)}, rotate = 269.44] [color={rgb, 255:red, 0; green, 0; blue, 0 }  ][line width=0.75]    (10.93,-4.9) .. controls (6.95,-2.3) and (3.31,-0.67) .. (0,0) .. controls (3.31,0.67) and (6.95,2.3) .. (10.93,4.9)   ;
\draw    (87.27,72.43) -- (87.27,104.6) ; 
\draw [shift={(87.27,106.6)}, rotate = 269.44] [color={rgb, 255:red, 0; green, 0; blue, 0 }  ][line width=0.75]    (10.93,-4.9) .. controls (6.95,-2.3) and (3.31,-0.67) .. (0,0) .. controls (3.31,0.67) and (6.95,2.3) .. (10.93,4.9)   ;
\draw    (135.4,122.2) -- (179.4,122.2) ; 
\draw [shift={(179.4,122.2)}, rotate = 180.48] [color={rgb, 255:red, 0; green, 0; blue, 0 }  ][line width=0.75]    (10.93,-4.9) .. controls (6.95,-2.3) and (3.31,-0.67) .. (0,0) .. controls (3.31,0.67) and (6.95,2.3) .. (10.93,4.9)   ;
\draw    (135.4,57.2) -- (179.4,57.2) ;
\draw [shift={(177.4,57.2)}, rotate = 180.48] [color={rgb, 255:red, 0; green, 0; blue, 0 }  ][line width=0.75]    (10.93,-4.9) .. controls (6.95,-2.3) and (3.31,-0.67) .. (0,0) .. controls (3.31,0.67) and (6.95,2.3) .. (10.93,4.9)   ;

\draw (65,115.4) node [anchor=north west][inner sep=0.75pt]    {$\overline{\bigoplus_k TM}$};
\draw (198.67,115.6) node [anchor=north west][inner sep=0.75pt]    {$\Gr_k(TM)$};
\draw (56.4,49) node [anchor=north west][inner sep=0.75pt]    {$J^{r}( \overline{\bigoplus_k TM})$};
\draw (186.27,50) node [anchor=north west][inner sep=0.75pt]    {$J^{r}( \Gr_k(TM))$};
\draw (148.87,109) node [anchor=north west][inner sep=0.75pt]  [font=\small]  {$\pi $};
\draw (148.87,41) node [anchor=north west][inner sep=0.75pt]  [font=\small]  {$\pi ^{r}$};
\draw (-3,50) node [anchor=north west][inner sep=0.75pt]    {$\mathcal{S}^{\step -r} \subset $};
\draw (270.4,50) node [anchor=north west][inner sep=0.75pt]    {$\supset \mathcal{R}^{\step -r}$};

\end{tikzpicture}
\end{center}
\end{figure}

We make the choices $Y= \overline{\bigoplus_k TM}$, $X=\Gr_k(TM)$ and the corresponding differential relations $\mathcal{S}^{\step -r}\subset J^{r}( \overline{\bigoplus_k TM})$ and $\mathcal{R}^{\step -r}\subset J^{r}( \Gr_k(TM))$. 

By Proposition \ref{AmplenessCriterion}, ampleness of $\mathcal{R}^{\step -r}$ is equivalent to ampleness of $\mathcal{S}^{\step -r}$. We will prove that $\mathcal{S}^{\step -r}$ is ample. This will, thus, yield ampleness of $\mathcal{R}^{\step -r}$.

\subsection{ Second reduction: restricting to non-normal principal directions.}\label{Red2}

The goal of this step is to check that ampleness of $\mathcal{S}^{\step -r}$ with respect to certain directions called ``normal'' follows trivially. This will imply that, after this step, we can reduce the study of ampleness of $\mathcal{S}^{\step -r}$ to non-normal principal directions. This will be encoded by Lemma \ref{PerpendicularEffect2}.

Fix a formal solution $F=(F_i)_{i=1}^k\in J^{r-1}( \overline{\bigoplus_k TM})$ and write $\SD_F=\langle j^0(F_1),\cdots, j^0(F_k)\rangle$ (we often drop the subindex and just write $\SD$ if it is clear from the context). 

\begin{remark}\label{RemarkPartialt}
Since convex integration works locally; i.e. it is implemented chart by chart, we can assume for the rest of the Section that $M=\R^n$. 
Up to changes of coordinates, when checking ampleness with respect to a certain direction $X_i(p)\in T_p\R^n$ over a point $p\in\R^n$, we can assume without loss of generality (recall Lemma \ref{DiffInvariance}) that this direction coincides with the first coordinate direction; i.e. $X_i(p)=\partial_t$.  

Similarly, we will consider the splitting $\R^n=\R[t]\times\R^{n-1}$ which induces the splitting of $F$ in Equation (\ref{FormalSplitting}). 
So, if it is not explicitly stated otherwise, whenever we write $\partial_t$ we are referring to $\partial_1$. Nonetheless, we will consistently use the letter $t$ for didactical reasons along the exposition.
\end{remark}

Whenever we talk about metric properties such as orthogonality, we will be referring to the Euclidean metric of $\R^n$. For the ease of notation we will denote $\mathcal{S}$ for $\mathcal{S}^{\step -r}$ along the rest of the Subsection.
 
\begin{definition}
A locally defined non-vanishing direction/vector field $X_i:\mathcal{U}\subset \R^n\to T\R^n$ is said to be \textbf{normal} at a point $p\in \R^n$ with respect to $F$ if it is normal to the distribution $\SD_F$  with respect to the Euclidean metric of $\R^n$ over $p$; i.e. $X_i(p)\perp\SD_F(p)$. Otherwise, we say that it is \textbf{non-normal}.
\end{definition}

\subsubsection{Principal subspaces} Recall that the principal subspaces  $\Pr^s_{\partial_t,F}$    correspond to the space of all $s$-jets that differ with $F$ only on the formal pure order-$s$  partial derivatives $j^s_t(F)=\left(j^s_t(F_i)\right)_{i=1}^k$ with respect to $\partial_t$ (where the usual identification of $F$ with $\pi^r_s(F)$ has been made, recall Remark \ref{NotationPr}).

Then, a principal subspace $\Pr^s_{\partial_t, F}$ can be expressed as:

\begin{equation}\label{PrincipalSubspaceDescription}
\Pr^s_{\partial_t, F}=\biggl\{ 
\left( (j^{\bot(\partial_t, s)}(F_i), 0) + (0, w_i)\right)_{i=1}^k: w_i\in\R^n
\biggr\},
\end{equation}

where the free $w_i\in \R^n$ parametrize all possible order-$s$ pure formal derivatives of $F_i$ with respect to $\partial_t$. 

\subsubsection{Normal directions and trivial ampleness}

Before proving the aforementioned Lemma \ref{PerpendicularEffect2}, we first state a more general result (Lemma \ref{PerpendicularEffect1}) which will not only imply Lemma  \ref{PerpendicularEffect2} but which will also be useful later on in this article.

Consider the first coordinate direction $\partial_t\in T_p\R^n$ over $p\in \R^n$ and a bracket expression $A_{(a_1,\cdots, a_\mathfrak{r})}(\cdot)$ of length $\mathfrak{r}\leq r$. Next Lemma states that if the expression $A_{(a_1,\cdots, a_\mathfrak{r})}(F)$  involves two components of $F$ with $j^0$-components orthogonal to $\partial_t$, then the whole expression does not depend on $j^{\mathfrak{r}-1}_t(F)$.

\begin{lemma}\label{PerpendicularEffect1} Consider $F=(F_i)_{i=1}^k\in J^{r-1}( \overline{\bigoplus_k T\R^n})$ over $p\in\R^n$. If a formal length-$\mathfrak{r}$ bracket $A_{(a_1,\cdots, a_{\mathfrak{r}})}(F)$ involves at least two components $F_{a_i}, F_{a_j}$ $(i,j\leq\mathfrak{r})$ from $F$ satisfying 

\begin{equation}\label{OthogonalityConditionComponents}
 j^0(F_{a_i}), j^0(F_{a_j})\perp\partial_t,
\end{equation}

 then it does not depend on $j^{\mathfrak{r}-1}_t(F)$. More precisely, there exists a map $h$ such that for all $\tilde{F}\in J^{r-1}( \overline{\bigoplus_k T\R^n})$ whose $j^0(F_{a_i}), j^0(F_{a_j})$ components satisfy Condition (\ref{OthogonalityConditionComponents}), the expression $A_{(a_1,\cdots, a_{\mathfrak{r}})}$ factors through $h$ as follows:

\begin{equation}
A_{(a_1,\cdots, a_{\mathfrak{r}})}(\tilde{F})={h}\circ  j^{\bot(\partial_t,\mathfrak{r-1})}(\tilde{F}). 
\end{equation}
\end{lemma}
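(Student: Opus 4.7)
The plan is to proceed by induction on the bracket length $\mathfrak{r}$, combining the recursive formula of Lemma \ref{LemmaSymbolBrackets} with the simplified-form guarantee of Lemma \ref{ExistenceSimplifiedForm}. The key intermediate claim to establish is: in the simplified-form expansion of $A_{(a_1,\ldots,a_\mathfrak{r})}(F)$, for every $m\in\{1,\ldots,\mathfrak{r}\}$ and every coordinate $i$, the coefficient of the pure $(\mathfrak{r}-1)$-order $t$-derivative $D_t^{\mathfrak{r}-1}(u_{a_m}^i)$ is either identically zero or equal, up to sign, to the single monomial $\prod_{\alpha\neq m}u_{a_\alpha}^t$.

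For the base case $\mathfrak{r}=2$, this reads off directly from Equation (\ref{BracketL2}): the derivatives $D_t(u_{a_1}^i)$ and $D_t(u_{a_2}^i)$ appear with coefficients $-u_{a_2}^t$ and $u_{a_1}^t$, respectively. For the inductive step, I would write $A_{(a_1,\ldots,a_\mathfrak{r})}(F)=[F_{a_1},p]$ with $p=A_{(a_2,\ldots,a_\mathfrak{r})}(F)$ and expand via
$$[F_{a_1},p]=\sum_{i,j=1}^n\bigl(u_{a_1}^j D_j(p^i)-p^j D_j(u_{a_1}^i)\bigr)\partial_i.$$
The summand $p^j D_j(u_{a_1}^i)$ only produces first-order derivatives of $u_{a_1}$, and $p$ itself carries derivative information of order at most $\mathfrak{r}-2$, so this piece contributes no pure $(\mathfrak{r}-1)$-order $t$-derivative once $\mathfrak{r}\geq 3$. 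In the first summand only the choice $j=t$ delivers a top-order contribution, producing $u_{a_1}^t D_t(p^i)$; by the Leibniz rule a genuinely pure $D_t^{\mathfrak{r}-1}(u_{a_m}^i)$ arises only when $D_t$ hits the already pure-$(\mathfrak{r}-2)$-order derivative factor of the monomial in $p^i$ supplied by the inductive hypothesis, leaving the remaining order-zero factors $u_{a_\alpha}^t$ intact (hitting any of those would produce a mixed, rather than pure, top-order derivative). Multiplying $u_{a_1}^t$ by the inductive product then yields the claimed monomial $\prod_{\alpha\neq m}u_{a_\alpha}^t$.

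Granted the intermediate claim, the Lemma follows by a pigeonhole argument: if $u_{a_i}^t(p)=u_{a_j}^t(p)=0$ at the base point, then for each $m$ the product $\prod_{\alpha\neq m}u_{a_\alpha}^t$ retains at least one of these two vanishing factors, since removing a single index $m$ can suppress at most one of them. Hence at $p$ every coefficient of a pure $(\mathfrak{r}-1)$-order $t$-derivative is zero. Combining this with Lemma \ref{DependanceJet} (which reduces the dependence of $A_{(a_1,\ldots,a_\mathfrak{r})}$ to the $(\mathfrak{r}-1)$-jet) and Lemma \ref{SplittingDt} (which identifies the $j^{\mathfrak{r}-1}_t(F)$ part with exactly these pure $t$-derivatives), I conclude that on the orthogonality locus the bracket factors through $j^{\bot(\partial_t,\mathfrak{r}-1)}(F)$, defining the required map $h$.

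The main obstacle will be the combinatorial bookkeeping in the inductive step: one must verify that no Leibniz cross-term in the expansion of $D_t(p^i)$ can contribute extra monomials to the coefficient of $D_t^{\mathfrak{r}-1}(u_{a_m}^i)$ beyond the one predicted by the inductive product structure. Once that is pinned down the vanishing at $p$ reduces cleanly to the elementary pigeonhole observation on the two zero factors.
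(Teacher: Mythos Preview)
Your argument is correct and follows essentially the same route as the paper's proof: both expand the bracket in simplified form and observe that any pure $D_t^{\mathfrak r-1}$-derivative must be accompanied by the order-zero factors $u_{a_\alpha}^t$, at least two of which vanish by hypothesis. The paper compresses this into a one-line count (``one $D_t$ per contribution of the $\partial_t$-component of each $j^0(F_{a_m})$''), whereas you make it rigorous via an induction on $\mathfrak r$ that pins down the coefficient of $D_t^{\mathfrak r-1}(u_{a_m}^i)$ as $\pm\prod_{\alpha\neq m}u_{a_\alpha}^t$; your pigeonhole step is then exactly what the paper's count encodes. One small point worth making explicit in your write-up: a top-order factor $D_t^{\mathfrak r-1}(u_{a_m}^s)$ with $s\neq i$ never appears in the $i$-th component (an easy auxiliary induction shows the highest-order factor in any monomial of the $i$-th component carries superscript $i$), so restricting attention to $D_t^{\mathfrak r-1}(u_{a_m}^i)$ loses nothing.
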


\begin{remark}
Note that, potentially, $F_{a_i}$ could denote the same component in $F$ as $F_{a_j}$. That would be the case if $a_i=a_j$ and, thus, the corresponding component appeared twice in the entries of $A_{(a_1,\cdots, a_r)}(F)$.
\end{remark}

\begin{proof}[Proof of Lemma \ref{PerpendicularEffect1}]
Since $\partial_t\perp j^0(F_{a_i}), j^0(F_{a_j})$, then these two vectors do not have $\partial_t$-component when written in coordinates:
\[
j^0(F_{a_i})=\color{blue}0\color{black}\cdot\partial_t + u^{a_i}_2\cdot\partial_2 + \cdots + u^{a_i}_n\cdot\partial_n,
\]
\[
j^0(F_{a_j})=\color{blue}0\color{black}\cdot\partial_t + u^{a_j}_2\cdot\partial_2 + \cdots +  u^{a_j}_n\cdot\partial_n.
\]

Express the bracket $A_{(a_1,\cdots, a_r)}(F)$   in simplified form (Lemma \ref{ExistenceSimplifiedForm}).  The maximal number $m$ of iterated derivations of the form $D_t\circ\cdots\circ D_t(u_{\ell_i}^q)$ that could potentially appear in the expression is at most $\mathfrak{r}-2$ (one per each contribution of the $\partial_t$-component of each $j^0(F_{a_m})$, $a_m\neq {a_i},{a_j}$). This means that such expression can be expressed purely in terms of jet-coordinates $u_{(i, J)}$ with $J\in\mathcal{S}_{\mathfrak{r}-1}$ and $J\neq (t,\cdots, t)\in\NS_{\mathfrak{r}-1}$. 

We thus conclude (Lemma \ref{SplittingDt}) that $A_{(a_1,\cdots, a_{\mathfrak{r}})}(\tilde{F})$ does not depend on $j^{\mathfrak{r}-1}_t(F)$ or, equivalently,  there is a factorization through some map $h$ as follows: $A_{(a_1,\cdots, a_\mathfrak{r})}(\cdot)={h}\circ  j^{\bot(\partial_t,{\mathfrak{r}-1})}(\cdot)$ for such jets satisfying Condition (\ref{OthogonalityConditionComponents}).\end{proof}

The following Lemma shows that any change to the pure order-$\mathfrak{r}-1$ information with respect to a direction $\partial_t$ perpendicular to $\SD$ does not contribute at all to changing the length-$\mathfrak{r}$ brackets of elements in $\SD$ (and, so, neither the bracket-generating condition defined based on them).

\begin{lemma}\label{PerpendicularEffect2} For jets $F=(F_i)_{i=1}^k\in J^{r-1}( \overline{\bigoplus_k T\R^n})$ over $p\in\R^n$ for which $\partial_t\perp\SD_F$, length-$\mathfrak{r}$ brackets $A_{(a_1,\cdots, a_\mathfrak{r})}(F)$ do not depend on $j^{\mathfrak{r}-1}_t(F)$ . Equivalently, there exists a map $h$ so that for such jets the bracket factors as:
\begin{align*}
A_{(a_1,\cdots, a_\mathfrak{r})}(F)={h}\circ j^{\bot(\partial_t,{\mathfrak{r}-1})}(F)
\end{align*}

\end{lemma}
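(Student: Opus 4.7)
The plan is to derive this lemma as an immediate corollary of the more general Lemma \ref{PerpendicularEffect1} proved just above. The key observation is that the hypothesis $\partial_t \perp \SD_F$ is strictly stronger than the hypothesis of Lemma \ref{PerpendicularEffect1} applied to \emph{every} length-$\mathfrak{r}$ bracket, so the conclusion transfers without further work.

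More concretely, by definition $\SD_F=\langle j^0(F_1),\cdots, j^0(F_k)\rangle$, so the orthogonality condition $\partial_t\perp \SD_F$ is equivalent to requiring $\partial_t\perp j^0(F_i)$ for every $i=1,\dots,k$. In particular, for any length-$\mathfrak{r}$ bracket $A_{(a_1,\dots,a_\mathfrak{r})}(F)$ with $\mathfrak{r}\geq 2$, all of its entries $F_{a_1},\dots,F_{a_\mathfrak{r}}$ have $j^0$-components orthogonal to $\partial_t$. In particular, there are (at least) two such entries satisfying the orthogonality condition (\ref{OthogonalityConditionComponents}) required in Lemma \ref{PerpendicularEffect1}. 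Thus that lemma applies verbatim, providing a map $h$ such that
\[
A_{(a_1,\cdots, a_\mathfrak{r})}(F)=h\circ j^{\bot(\partial_t,\mathfrak{r}-1)}(F),
\]
which is exactly the claim.

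Finally, the case $\mathfrak{r}=1$ is immediate: by Equation (\ref{BracketCoord1}), $A_{(a_1)}(F)=\sum_{i=1}^n u^i_{a_1}\partial_i$ depends only on $j^0(F_{a_1})$, which is clearly part of the data encoded by $j^{\bot(\partial_t,0)}(F)$, so the factorisation is trivial. The main obstacle, insofar as there is one, was already handled in Lemma \ref{PerpendicularEffect1}, namely the bookkeeping in the simplified form showing that each $\partial_t$-component of some $j^0(F_{a_\ell})$ is needed to produce a pure iterated $D_t$-derivation; with at least two components orthogonal to $\partial_t$, at most $\mathfrak{r}-2$ iterations of $D_t$ can appear, which is insufficient to involve the pure order-$(\mathfrak{r}-1)$ derivative $j^{\mathfrak{r}-1}_t(F)$. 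Here that bookkeeping is for free because \emph{all} components are orthogonal to $\partial_t$.
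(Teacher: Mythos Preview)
Your proof is correct and takes essentially the same approach as the paper's: both observe that $\partial_t\perp\SD_F$ forces $j^0(F_a)\perp\partial_t$ for \emph{every} index $a$, so any length-$\mathfrak{r}$ bracket automatically has (at least) two entries satisfying the orthogonality hypothesis of Lemma~\ref{PerpendicularEffect1}, and that lemma does the rest. Your additional treatment of the case $\mathfrak{r}=1$ is harmless overcompleteness (the splitting $j^{\bot(\partial_t,0)}\oplus j^0_t$ is not really defined in the paper's framework, and in practice the lemma is only invoked for $\mathfrak{r}\geq 2$), but it does not affect correctness.
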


\begin{figure}[h] 
	\includegraphics[scale=0.475]{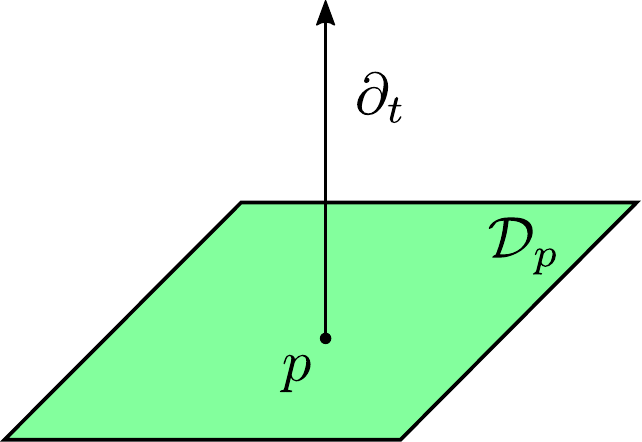}
	\centering
	\caption{When $\partial_t\perp\SD_F$ at a point $p\in \R^n$, any formal bracket of length $\mathfrak{r}$ involving components of $F$ does not depend on the pure $\mathfrak{r}-1$-order information in the direction of $\partial_t$ (Lemma \ref{PerpendicularEffect2}). Therefore, ampleness of $ \mathcal{S}^i_{\partial_t, F}$ within $ \Pr^i_{\partial_t,F}$ holds trivially for all $1\leq i \leq r-1$ in this case (Corollary \ref{TrivialAmpleness}). }\label{orthogonal}
\end{figure}
\begin{proof}
Since $\partial_t\perp\SD_p$, no $j^0(F_a)$ has $\partial_t$-component; i.e. for any $a\in\lbrace 1,\cdots, k\rbrace$ we can write:

\[
j^0(F_a)=\sum_{i=1}^n a_i\partial_i = \color{blue}0\color{black}\cdot\partial_t + a_2\cdot\partial_2 + \cdots +  a_n\cdot\partial_n.
\]

The result then readily follows by Lemma \ref{PerpendicularEffect1}.
\end{proof}

\begin{remark}
Note that we did not use the fact that $k>2$ in the previous Lemmas; i.e. they apply to the $\rank$-$2$ case as well.
\end{remark}

\begin{corollary}\label{TrivialAmpleness}
If $\partial_t\perp\SD_F$ then $ \mathcal{S}^i_{\partial_t, F}$ is trivially ample in $ \Pr^i_{\partial_t,F}$ for $1\leq i\leq r-1$.
\end{corollary}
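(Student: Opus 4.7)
The strategy is to argue that when $\partial_t\perp \SD_F$, the defining conditions of the projected relation $\pi^{r-1}_i(\mathcal{S}^{\step -r})$ do not depend on the $j^i_t$ coordinate of a jet. Consequently, the intersection $\mathcal{S}^i_{\partial_t, F}=\pi^{r-1}_i(\mathcal{S}^{\step -r})\cap\Pr^i_{\partial_t, F}$ will be forced to be either empty or the whole principal subspace, both of which are trivially ample in the sense of Definition \ref{def:ampleAffine}.

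First I would describe the projected relation $\pi^{r-1}_i(\mathcal{S}^{\step -r})\subset J^i(\overline{\bigoplus_k T\R^n})$. By Lemma \ref{DependanceJet}, each length-$\ell$ bracket $A_I$ is a well-defined function on $J^{\ell-1}$, so for $\ell\leq i+1$ the rank conditions $\dim \SD_\ell(G)=\mathfrak{n}_\ell$ make sense on the $i$-jet $G$, and these are precisely the conditions cutting out $\pi^{r-1}_i(\mathcal{S}^{\step -r})$ (the higher-order conditions can always be achieved by an appropriate choice of higher-order data in a lift to $J^{r-1}$). Next I would invoke Lemma \ref{PerpendicularEffect2}: every $G\in \Pr^i_{\partial_t, F}$ shares its $j^0$-part with $F$, hence $\partial_t\perp \SD_G$ as well, and so for each $\mathfrak{r}\leq i+1\leq r$ the bracket $A_I(G)$ depends only on $j^{\bot(\partial_t,\mathfrak{r}-1)}(G)$. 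In particular these brackets are independent of $j^i_t(G)$, which is the sole coordinate varying across $\Pr^i_{\partial_t, F}$.

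The conclusion follows immediately: as $G$ ranges over $\Pr^i_{\partial_t, F}$, the spaces $\SD_\ell(G)$ for $\ell\leq i+1$ stay constant and hence the defining rank conditions of $\pi^{r-1}_i(\mathcal{S}^{\step -r})$ are simultaneously satisfied on the whole principal subspace or on none of it. I do not anticipate any genuine obstacle here, as the statement is essentially Lemma \ref{PerpendicularEffect2} translated into the language of principal subspaces; the only care required is the bookkeeping check that the rank conditions defining $\pi^{r-1}_i(\mathcal{S}^{\step -r})$ factor through $j^{\bot(\partial_t, i)}(G)$, which is exactly what Lemma \ref{PerpendicularEffect2} provides in the orthogonal regime.
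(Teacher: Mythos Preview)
Your proposal is correct and follows essentially the same approach as the paper: both invoke Lemma \ref{PerpendicularEffect2} to conclude that, when $\partial_t\perp\SD_F$, the brackets of length $\leq i+1$ (and hence the rank conditions cutting out the projected relation) are independent of $j^i_t$, so $\mathcal{S}^i_{\partial_t,F}$ is either empty or all of $\Pr^i_{\partial_t,F}$. You are slightly more explicit than the paper in identifying $\pi^{r-1}_i(\mathcal{S}^{\step-r})$ with the first $i{+}1$ rank conditions, and your parenthetical assertion that the higher-order conditions can always be achieved by a suitable lift is precisely what both arguments tacitly rely on.
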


\begin{proof}

If $F\in\mathcal{S}^{\step -r}$,  then $\dim\left(\langle\mathfrak{Br}^j\rangle\right)=\mathfrak{n}_j$ for all $1\leq j\leq r$, where $(\mathfrak{n}_j)_{j=1}^r$ is a maximal growth vector (by Definition \ref{formalBG}). Take any $i=1,\cdots, r-1$. Any other $\tilde{F}\in \Pr^i_{\partial_t, F}$ only differs from $F$ in the pure order-$i$ formal partial derivatives with respect to $\partial_t$ (Equation \ref{PrincipalSubspaceDescription}) and, therefore, by Lemma \ref{PerpendicularEffect2}, it has the same associated sets $\mathfrak{Br}^j$ for $1\leq j\leq r$ (thus satisfying the formally maximal growth condition as well). We conclude thus that any $\tilde{F}\in \Pr^i_{\partial_t, F}$ is a formal solution of  $\mathcal{S}^{\step -r}$, yielding trivial ampleness of $ \mathcal{S}^i_{\partial_t, F}$ within $ \Pr^i_{\partial_t,F}$ for $1\leq i\leq r-1$.\end{proof}

\subsection{Adapted frames with respect to non-normal directions}\label{AdaptedFrames1}

The goal of this Subsection is to introduce some local frames associated to a given distribution and a non-normal direction where ampleness of $\mathcal{S}^{\step -r}$ can be easily checked.

\begin{definition}

Let $\SD$ be a distribution on $\R^n$. We say that a frame $\mathfrak{Fr}=\lbrace X_1,\cdots, X_k\rbrace$ of $\SD$  is \textbf{ adapted to} the direction $\partial_t$ if the following conditions hold pointwise:

\begin{itemize}

\item[i)] $X_1$ is the orthogonal projection of $\partial_t$ onto the distribution $\SD$.
\item[ii)] For  $i\geq 2$, the vector fields $X_i$ are orthogonal to $X_1$; i.e. they lie in $\langle X_1\rangle^\perp\cap\SD$.\end{itemize}
\end{definition}

\begin{figure}[h] 
	\includegraphics[scale=0.34]{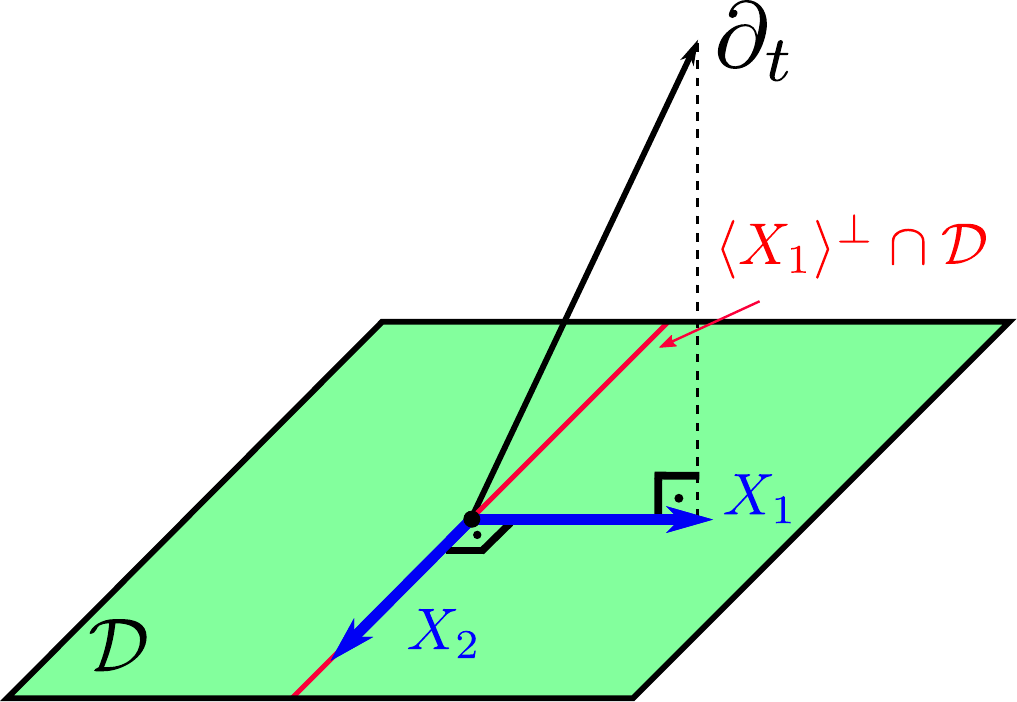}
	\centering
	\caption{Example of a  rank-$2$ distribution $\SD$ and a frame $\mathfrak{Fr}=\lbrace X_1, X_2\rbrace$ (in blue) adapted to the non-normal direction $\partial_t$: note that $X_1$ is the normal projection of $\partial_t$ onto $\SD$ by the Euclidean metric of $\R^3$ and $X_2$ lies in $\langle X_1\rangle^\perp\cap\SD$ (in red).  }\label{AdaptedFrame}
\end{figure}

\begin{remark}\label{orth}
Conditions $i)$ and $ii)$ imply, in particular, that  $X_i\perp\partial_t$ for $i\geq 2$. See Figure \ref{AdaptedFrame}.
\end{remark}

The definition above naturally extends to the context of $r-1$-jets.

\begin{definition}

Take $F=(F_i)_{i=1}^k\in J^{r-1}( \overline{\bigoplus_k T\R^n})$. We say that $F$ at $p\in\R^n$ is \textbf{adapted to }$\partial_t$ if the frame $\lbrace j^0(F_1),\cdots, j^0(F_k)\rbrace$ of $\SD:=\langle j^0(F_1),\cdots, j^0(F_k)\rangle$  is adapted to $\partial_t$ at $p$.
\end{definition}

Since we are working in coordinates where $\partial_t$ denotes the first coordinate direction of $\R^n$ (recall Remark \ref{RemarkPartialt}), the definition above implies the elements $j^0(F_j)$ to admit expressions of the form:
\begin{align}\label{AdaptedInCoordinates}
j^0(F_1) =\phantom{,} & 1\cdot \partial_t + \sum_{\substack{j=2}}^n u_1^j\partial_j, \quad u_j^1\in\R \\\label{AdaptedInCoordinates2}
j^0(F_i) =\phantom{,} & 0\cdot\partial_t + \sum_{\substack{j=2}}^n u^j_i\partial_j  \quad\text{ for }i\geq 2, \quad u_i^j\in\R.
\end{align}

\begin{lemma}[Existence of $\partial_t$-adapted frames]\label{ExistenceAF}
Consider a distribution $\SD$ and a point $p\in \R^n$. If the integral field $X_t\subset T\R^n$ of the coordinate direction $\partial_t$ is non-normal with respect to $\SD$, then there exists a local frame of $\SD$ which is adapted to $X_t$ in a small neighborhood $\Op(p)$.
\end{lemma}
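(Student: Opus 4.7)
The plan is to construct the adapted frame in two steps: first produce $X_1$ via orthogonal projection, then complete it to a frame whose remaining vectors lie in $\langle X_1\rangle^\perp\cap \SD$ by a Gram--Schmidt-type adjustment.

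First, I would define $X_1$ directly using the hypothesis. Let $\pi_\SD:T\R^n\to \SD$ denote fibrewise orthogonal projection onto $\SD$ with respect to the Euclidean metric of $\R^n$; this is a smooth bundle map, since $\SD$ is a smooth subbundle. Set $X_1:=\pi_\SD(\partial_t)$. By hypothesis $\partial_t$ is non-normal to $\SD$ at $p$, i.e. $X_1(p)\neq 0$. Non-vanishing is an open condition, so $X_1$ is a smooth non-vanishing section of $\SD$ on some neighbourhood $\Op(p)$, and by construction it satisfies condition $i)$ of the definition of $\partial_t$-adapted frame.

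Next, I would extend $X_1$ to a local frame of $\SD$ and then orthogonalise the remaining vectors against $X_1$. Since $\SD$ is a smooth rank-$k$ subbundle and $X_1$ is a non-vanishing section on $\Op(p)$, a standard local trivialisation argument provides sections $Y_2,\ldots,Y_k\in\Gamma(\SD|_{\Op(p)})$ such that $\{X_1,Y_2,\ldots,Y_k\}$ is a local frame of $\SD$ (possibly after shrinking $\Op(p)$). Now define, for $i=2,\ldots,k$,
\[
X_i := Y_i - \frac{\langle Y_i,X_1\rangle}{\langle X_1,X_1\rangle}\, X_1,
\]
which is well-defined because $\langle X_1,X_1\rangle$ does not vanish on $\Op(p)$. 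Each $X_i$ is a smooth section of $\SD$ (being a linear combination of sections of $\SD$), and by construction $\langle X_i,X_1\rangle=0$, so $X_i\in \langle X_1\rangle^\perp\cap\SD$, verifying condition $ii)$. Finally, since each $X_i$ differs from $Y_i$ only by a multiple of $X_1$, the collection $\{X_1,X_2,\ldots,X_k\}$ spans the same subspace as $\{X_1,Y_2,\ldots,Y_k\}$ at every point of $\Op(p)$ and is therefore a local frame of $\SD$.

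The argument has essentially no hard part: the only ingredient beyond linear algebra is the openness of the non-normality condition and the local existence of a smooth frame of $\SD$ extending a given non-vanishing section, both of which are standard. The only point that requires minimal care is shrinking $\Op(p)$ so that all three conditions (non-vanishing of $X_1$, existence of the completing frame $Y_2,\ldots,Y_k$, and linear independence of the resulting $\{X_1,X_2,\ldots,X_k\}$) hold simultaneously, which is automatic since each is an open condition satisfied at $p$.
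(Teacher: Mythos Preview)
Your proof is correct and follows essentially the same approach as the paper: define $X_1$ as the orthogonal projection of $\partial_t$ onto $\SD$ (nonzero near $p$ by openness of non-normality), then complete to a frame lying in $\langle X_1\rangle^\perp\cap\SD$. The only difference is cosmetic---you spell out the second step via an explicit Gram--Schmidt adjustment, whereas the paper simply invokes the existence of a local frame of the rank-$(k-1)$ subbundle $\langle X_1\rangle^\perp\cap\SD$.
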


\begin{proof}
Since non-normality is an open condition, there exists, on a small neighborhood $\Op(p)$ of $p$, a non-zero smooth projection $X_1$ of $X_t$ onto $\SD$. We can always locally complete $X_1$ to a frame $\mathfrak{Fr}=\lbrace X_1,\cdots, X_k\rbrace$ of $\SD$ adapted to $\partial_t$ by considering a frame of $\langle X_1\rangle^\perp$ inside $\SD$. This proves the local existence of adapted frames.
\end{proof}

Lemma \ref{ExistenceAF} readily implies, in terms of jets, the following lemma.

\begin{lemma}[Existence of $\partial_t$-adapted jets]\label{ExistenceAdapted}
Consider $F\in J^{r-1}(\Gr_k(T\R^n))$ over a point $p\in\R^n$ such that the direction $\partial_t\in T_p\R^n$ is not normal to $\SD_F= j^0(F)$. Then there always exists an element  $\tilde{F}=(\tilde{F}_i)_{i=1}^k\in J^{r-1}( \overline{\bigoplus_k T\R^n})$ projecting to $F$ by $\pi^r$ (recall Section \ref{FBD}) which is adapted to $\partial_t$.
\end{lemma}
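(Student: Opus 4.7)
The proof I have in mind is essentially a three-line argument that chains together the existence of adapted frames at the smooth level (Lemma \ref{ExistenceAF}) with the functoriality of jet extensions with respect to the bundle map $\pi:\overline{\bigoplus_k TM}\to\Gr_k(TM)$.

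First, I would realise $F$ as the $(r-1)$-jet at $p$ of an actual smooth distribution. Any jet can be realised by a germ of a section, so pick a smooth $k$-distribution $\SD$ defined on a neighbourhood $\Op(p)\subset\R^n$ with $j^{r-1}_p(\SD)=F$. In particular $\SD(p)=j^0(F)$, so by hypothesis the direction $\partial_t\in T_p\R^n$ is non-normal to $\SD$ at $p$. Since non-normality is an open condition, after shrinking $\Op(p)$ we may assume $\partial_t$ is non-normal to $\SD$ throughout $\Op(p)$.

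Next, I would apply Lemma \ref{ExistenceAF} to produce a smooth local frame $\mathfrak{Fr}=\lbrace X_1,\cdots,X_k\rbrace$ of $\SD$ on $\Op(p)$ adapted to $\partial_t$. Then I set
\[
\tilde{F}_i:=j^{r-1}_p(X_i),\qquad \tilde{F}:=(\tilde{F}_1,\cdots,\tilde{F}_k).
\]
Because the $X_i$ are pointwise linearly independent at $p$, the tuple $\tilde{F}$ lies in $J^{r-1}(\overline{\bigoplus_k T\R^n})$. By construction $\lbrace X_1,\cdots,X_k\rbrace$ spans the distribution $\SD$, so the $(r-1)$-jet of the induced distribution agrees with the $(r-1)$-jet of $\SD$ at $p$; equivalently, $\pi^{r-1}(\tilde{F})=j^{r-1}_p(\SD)=F$.

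Finally, being adapted to $\partial_t$ is a pointwise condition that only involves the $0$-jet part of $\tilde F$, namely $j^0(\tilde F_i)=X_i(p)$. Since $\mathfrak{Fr}$ was chosen adapted to $\partial_t$ at every point of $\Op(p)$, in particular at $p$, the lift $\tilde{F}$ is adapted to $\partial_t$ as required. I do not foresee any real obstacle: the only point that deserves a line of care is that the induced projection $\pi^{r-1}$ really does send the jet of a frame to the jet of its spanned distribution, which is immediate from the definition of $\pi$ as the spanning map together with the naturality of jet prolongation under bundle morphisms.
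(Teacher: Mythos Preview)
Your argument is correct and is precisely the paper's approach: the paper simply states that Lemma \ref{ExistenceAF} ``readily implies'' Lemma \ref{ExistenceAdapted}, and what you have written is exactly the unpacking of that implication---realise $F$ by a germ of distribution, apply Lemma \ref{ExistenceAF} to obtain an adapted frame, and take its $(r-1)$-jet.
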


\subsection{Principal subspaces and adapted frames}\label{AdaptedFrames2}

We will give in this Subsection a description of principal subspaces and their intersection with $\mathcal{S}^{\step -r}$  in terms of adapted frames. We will introduce some notation first.

Consider some jet $\tilde{F}\in J^{r-1}\left(\Gr_k(T\R^n)\right)$. Take now $F=(F_1,\cdots, F_k)\in J^{r-1}( \overline{\bigoplus_k T\R^n})$ adapted to $\partial_t$ and so that $\pi^{r-1}(F)=\tilde{F}$ (which exists by Lemma \ref{ExistenceAdapted}).

Consider $\mathfrak{Br}^1:=\lbrace j^0(F_1), j^0(F_2),\cdots, j^0(F_{k})\rbrace$ so that $\SD_F=\langle  j^0(F_1), j^0(F_2),\cdots, j^0(F_{k})\rangle$. 

Let us split the set of indices $\mathfrak{I}_i=\lbrace (\ell_1,\cdots, \ell_j) : j\leq i \text{ and } \ell_1,\cdots, \ell_j\in\lbrace 1,\cdots, k\rbrace\rbrace\cup\lbrace O\rbrace$ (recall Definition \ref{OrderedMI}) into 

\begin{equation}\label{SplittingSubindices} 
\mathfrak{I}_i=\mathfrak{I}_i^t\sqcup \tilde{\mathfrak{I}}_i,
\end{equation}

where  $\tilde{\mathfrak{I}}_i$ denotes the set of multi-indices of length less or equal than $i$ where the index $1$ does not appear $i-1$ times. More precisely, denote by $\Sigma_j$ the symmetric group of degree $j$. Then:

\begin{align*} 
&\tilde{\mathfrak{I}}_i:= \Big\{ (\ell_1,\cdots, \ell_j) : 0\leq j\leq i,\quad \forall\sigma\in\Sigma_{j}\quad \forall m\leq k,\quad \sigma  (\ell_1,\cdots, \ell_j) \neq(\underbrace{1,\cdots, 1}_{i-1}, m)   \Big\} \\
&\mathfrak{I}_i^t:= \Big\{ (\ell_1,\cdots, \ell_i) : \exists \sigma\in\Sigma_{i}\quad \exists m\leq k,\quad \sigma  (\ell_1,\cdots, \ell_i) = (\underbrace{1,\cdots, 1}_{i-1}, m)\Big\}.
\end{align*}
\begin{remark}
The case $j=0$ in the definition of the set $\tilde{\mathfrak{I}}_i$ corresponds to the empty subindex $O=()$ which also belongs to the set.
\end{remark}

Splitting (\ref{SplittingSubindices}) induces a splitting in $\mathfrak{Br}^{i}$ (recall Definition \ref{BrFormal} and Remark \ref{ommitF}) as $\mathfrak{Br}^{i}=  {\mathfrak{Br}}_{\perp, t}^{i} \sqcup \mathfrak{Br}^{i}_t$, where 
\begin{equation}\label{Br2types} {\mathfrak{Br}}_{\perp, t}^{i}=\lbrace A_J(F): J\in \tilde{\mathfrak{I}}_i\rbrace \quad \text{and}\quad \mathfrak{Br}^{i}_t=\lbrace A_J(F): J\in\mathfrak{I}_i^t\rbrace.\end{equation}

 We then (pointwise) define the vector spaces $\SD_\perp^i:=\langle {\mathfrak{Br}}_{\perp, t}^{i}\rangle$ and $\SD^i_t:=\langle\mathfrak{Br}^{i}_t\rangle$, where 

\begin{equation}\label{SDsum}
\SD_i=\SD_\perp^i + \SD^i_t.
\end{equation}

\begin{remark}
The notation chosen for denoting the planes $\SD_\perp^i $ and $ \SD^i_t$ will be justified in the context of adapted $r-1$-jets by Remark \ref{Terminology}. We may write $\SD_\perp^i(F)$ and $ \SD^i_t(F)$ whenever we want to make it explicit that these objects are associated to a specific jet $F=(F_1,\cdots, F_k)\in J^{r-1}( \overline{\bigoplus_k T\R^n})$.
\end{remark}

\begin{remark} Note that the sum in equation (\ref{SDsum}) is not necessarily a direct sum in general. Lemma \ref{Dsplits} will show that it actually is under additional hypothesis.
\end{remark}


Denote by $\mathfrak{h}^i_m, \mathfrak{p}^i_m$ the length-$i$ multi-indices of the form $\mathfrak{h}^i_m:= (t,\cdots, t, m)\in\mathcal{I}^t_i$ and $\mathfrak{p}^i_m=(t,\cdots, m, t)\in\mathcal{I}^t_i$, respectively. 

\begin{remark}\label{BracketsOpposite} Note that, by Lemma \ref{antisymmetry}, the only length-$i$ brackets in $\mathfrak{Br}_t^i(F)$ which are potentially non-zero are the ones of the form $A_{\mathfrak{h}^i_m}(F)$ and $A_{\mathfrak{p}^i_m}(F)$, $m\neq t$. Moreover, Lemma \ref{antisymmetry} also implies that the following equality holds: $A_{\mathfrak{h}^i_m}(F)=- A_{\mathfrak{p}^i_m}(F)$. Thus, brackets $A_{\mathfrak{h}^i_m}(F)$, $m\geq 2$, are generators of $\SD_t^i(F)$.
\end{remark}

\begin{lemma}\label{Dsplits}
If $F\in\mathcal{S}^{\step -r}$ and $i<r$, then
\begin{itemize}
\item[i)] $\SD_i(F)=\SD_\perp^i(F) \oplus \SD^i_t(F)$ and, moreover,
\item[ii)] $\dim(\SD^i_t(F))=k-1$.
\end{itemize}
\end{lemma}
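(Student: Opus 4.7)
The plan is to deduce both statements from the free Lie algebra structure enjoyed by the nilpotentisation: because $F\in\mathcal{S}^{\step-r}$ and $i<r$, the quotient $\SD_i(F)/\SD_{i-1}(F)$ is canonically isomorphic to the length-$i$ component $\mathfrak{Lie}_k^i$ of the free Lie algebra on $k$ generators (as explained in the discussion preceding Proposition \ref{gvMax}), and the classes of length-$i$ formal brackets $A_J(F)$ satisfy only antisymmetry and Jacobi.

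For part (ii), I would first observe via Remark \ref{BracketsOpposite} together with Lemma \ref{antisymmetry} that $\SD^i_t(F)$ is spanned by the $k-1$ elements $A_{\mathfrak{h}^i_m}(F)=\mathrm{ad}_{F_1}^{i-1}(F_m)$ for $m=2,\ldots,k$ (the case $m=1$ vanishing by antisymmetry of $[F_1,F_1]$). Lemma \ref{HallBasisElements} identifies each such bracket with a distinct length-$i$ Hall basis element, so they project to $k-1$ linearly independent elements of $\mathcal{V}_i\subset\mathfrak{Lie}_k^i\cong\SD_i(F)/\SD_{i-1}(F)$. Linear independence upstairs in $\SD_i(F)$ follows at once, yielding $\dim\SD^i_t(F)=k-1$ and, as a byproduct, $\SD^i_t(F)\cap\SD_{i-1}(F)=0$.

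For part (i), the index splitting (\ref{SplittingSubindices}) already gives $\SD_i(F)=\SD^i_\perp(F)+\SD^i_t(F)$, so the only content is directness. My approach would be via a multidegree argument: to each index $I$ one associates its multidegree $\mu(I)\in\mathbb{N}^k$ recording how many times each of $1,\ldots,k$ appears. Both Jacobi and antisymmetry preserve $\mu$, so $\mathfrak{Lie}_k^i$ decomposes as a direct sum indexed by multidegrees of total weight $i$. Every generator of $\SD^i_t(F)$ carries the special multidegree $(i-1,0,\ldots,1,\ldots,0)$ with the $1$ in some slot $m\geq 2$, while by the very definition of $\tilde{\mathfrak{I}}_i$ no length-$i$ generator of $\SD^i_\perp(F)$ has such a multidegree. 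Hence the images of $\SD^i_\perp(F)$ and $\SD^i_t(F)$ in $\SD_i(F)/\SD_{i-1}(F)$ lie in complementary multigraded components and intersect trivially. Since $\SD_{i-1}(F)\subset\SD^i_\perp(F)$ (because $\tilde{\mathfrak{I}}_i$ contains all indices of length $<i$), combining this with $\SD^i_t(F)\cap\SD_{i-1}(F)=0$ from (ii) yields $\SD^i_t(F)\cap\SD^i_\perp(F)=0$, completing the decomposition.

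The main obstacle I anticipate is the careful bookkeeping needed to transfer the abstract multidegree statement about $\mathfrak{Lie}_k^i$ to the concrete setting of formal brackets of jets. However, since the formal brackets are by construction the symbols of the usual Lie bracket operators (Lemma \ref{LemmaSymbolBrackets}) and multidegree is preserved by the algebraic identities defining $\mathfrak{Lie}_k$, this should amount to a routine translation once the statement is organised cleanly; the genuine mathematical content is packaged entirely within the maximal growth hypothesis and the free Lie algebra identification.
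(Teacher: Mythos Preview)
Your proposal is correct and follows essentially the same route as the paper: both arguments rest on the identification $\SD_i(F)/\SD_{i-1}(F)\cong\mathfrak{Lie}_k^i$ for $i<r$, invoke Remark~\ref{BracketsOpposite} to reduce $\SD_t^i(F)$ to the $k-1$ generators $A_{\mathfrak{h}_m^i}(F)$, and use Lemma~\ref{HallBasisElements} to locate these as Hall basis elements. The only difference is in how directness in (i) is extracted: the paper appeals directly to the Hall basis (the $A_{\mathfrak{h}_m^i}(F)$ are basis vectors, hence not combinations of the remaining brackets), whereas you isolate the multidegree decomposition of $\mathfrak{Lie}_k^i$ explicitly and combine it with the observation $\SD_{i-1}(F)\subset\SD_\perp^i(F)$. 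Your version is slightly more self-contained---it makes transparent the mechanism (preservation of multidegree under Jacobi and antisymmetry) that the paper's invocation of the Hall basis leaves implicit, and which in fact already appears inside the proof of Lemma~\ref{HallBasisElements}---but the two arguments are really the same idea wearing different clothes.
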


\begin{proof}
 First note that, for $i<r$, the elements $\SD_i(F)/\SD_{i-1}(F)$ have the same dimensions as the subspaces $\mathfrak{Lie}_n^i$ in the free Lie algebra $\mathfrak{Lie}_n$ (by Definition \ref{formalBG} together with Lemma \ref{gvMax}). Take a Hall basis $H$ associated to the ordered set $\lbrace F_1,\cdots, F_k\rbrace$. The subset $\mathcal{V}_i$ (Definition \ref{hall}) provides thus a basis of  $\SD_i(F)/\SD_{i-1}(F)$. By Corollary \ref{HallBasisElements}, the $k-1$ elements $A_{\mathfrak{h}^i_m}(F), (m=2,\cdots, k)$ cannot be expressed as combinations of elements from  $\mathfrak{Br}^{i}_\perp$, yielding $i)$. Additionally, they conform a basis of $\mathfrak{Br}^{i}_t$ since they are linearly independent (by Corollary \ref{HallBasisElements}) and they generate $\mathfrak{Br}^{i}_t$ (by Remark \ref{BracketsOpposite}). This yields $ii)$. \end{proof}

The following result is just a rephrasing of the notion of (formal) growth vector from Definition \ref{FormalDi} in terms of Equation (\ref{SDsum}). We will phrase it as a lemma.

\begin{lemma}
The condition for $\SD$ having growth vector $\nu_\SD=(\mathfrak{n}_i)_{i=1}^r$ is equivalent to:
\begin{equation}\label{conditionBG2}
\text{ for each }i\geq 1,\text{ }\dim\left( {\SD}_{\perp}^{i} + \SD_t^{i}\right) = \mathfrak{n}_i. 
\end{equation}

 or, equivalently, 
\begin{equation}\label{conditionBG}
\text{ for each }i\geq 1,\text{ }\dim\left(\SD_t^{i}/{\SD}_{\perp}^{i}\right)=\mathfrak{n}_i-\dim({\SD}^i_{\perp}). 
\end{equation} 

\end{lemma}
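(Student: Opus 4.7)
The plan is to observe that this lemma is essentially a direct unpacking of Definition \ref{FormalDi} combined with the decomposition in Equation (\ref{SDsum}); there is no genuine content beyond an algebraic rearrangement, so the proof should be only a few lines. First, I would recall that by Definition \ref{FormalDi}, $\SD$ has growth vector $(\mathfrak{n}_i)_{i=1}^r$ precisely when $\dim(\SD_i(F)) = \mathfrak{n}_i$ for each $i \geq 1$. Next, I would invoke the decomposition $\SD_i = \SD_\perp^i + \SD_t^i$ established in Equation (\ref{SDsum}), which immediately gives $\dim(\SD_i) = \dim(\SD_\perp^i + \SD_t^i)$ and therefore the first rephrasing (\ref{conditionBG2}).

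For the equivalence between (\ref{conditionBG2}) and (\ref{conditionBG}), the key point is to interpret the quotient $\SD_t^i/\SD_\perp^i$ as $(\SD_t^i + \SD_\perp^i)/\SD_\perp^i$, since $\SD_\perp^i$ need not be contained in $\SD_t^i$ in general. Under this convention, the standard dimension formula for quotients of vector spaces yields
\[
\dim\bigl((\SD_t^i + \SD_\perp^i)/\SD_\perp^i\bigr) = \dim(\SD_t^i + \SD_\perp^i) - \dim(\SD_\perp^i),
\]
so assuming (\ref{conditionBG2}) the right-hand side equals $\mathfrak{n}_i - \dim(\SD_\perp^i)$, giving (\ref{conditionBG}); conversely, (\ref{conditionBG}) rearranges to (\ref{conditionBG2}). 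The only subtle point worth flagging in the write-up is this notational convention for the quotient, since $\SD_\perp^i \cap \SD_t^i$ may be nonzero; everything else is immediate. I do not anticipate any genuine obstacle.
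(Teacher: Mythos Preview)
Your proposal is correct and matches the paper's own treatment: the paper does not give a separate proof but simply introduces the lemma as ``just a rephrasing of the notion of (formal) growth vector from Definition \ref{FormalDi} in terms of Equation (\ref{SDsum}),'' which is exactly the unpacking you describe. Your remark clarifying that $\SD_t^i/\SD_\perp^i$ should be read as $(\SD_t^i+\SD_\perp^i)/\SD_\perp^i$ is a helpful addition that the paper leaves implicit.
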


\begin{lemma}\label{CasesRanks}
Let $F\in\mathcal{S}^{\step -r}$ and $m_i:=\dim\left(\SD^i_\perp(F)\right)$. The following statements hold:
\begin{itemize}
\item[i)] if $i=r$, then $\mathfrak{n}_i=n$,
\item[ii)] if $i<r$, then $m_i+k-1=\mathfrak{n}_i<n$.
\end{itemize}
\end{lemma}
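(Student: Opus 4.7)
The plan is to deduce both statements almost directly from the setup already in place: Definition \ref{formalBG} (which fixes what it means for $F$ to lie in $\mathcal{S}^{\step -r}$) together with Lemma \ref{Dsplits} (which gives the splitting $\SD_i(F) = \SD^i_\perp(F) \oplus \SD^i_t(F)$ with $\dim \SD^i_t(F) = k-1$ whenever $i<r$).

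For part i), I would simply invoke Definition \ref{formalBG}: by hypothesis $F$ is a formal frame of maximal growth of step-$r$, so the associated growth vector $(\mathfrak{n}_j)_{j=1}^r$ is the maximal growth vector corresponding to the ambient dimension, whose last entry is $\mathfrak{n}_r = \dim M = n$. Nothing else is needed here.

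For part ii), the argument splits in two. First, to show $\mathfrak{n}_i<n$ for $i<r$, I would note that Definition \ref{formalBG} explicitly requires $\mathfrak{n}_{r-1}<\dim M = n$, and since the growth vector is nondecreasing we have $\mathfrak{n}_i\leq \mathfrak{n}_{r-1}<n$. Second, to establish the equality $m_i + k - 1 = \mathfrak{n}_i$, I would apply Lemma \ref{Dsplits}: since $i<r$, the decomposition $\SD_i(F) = \SD^i_\perp(F) \oplus \SD^i_t(F)$ is a direct sum with $\dim \SD^i_t(F) = k-1$, so
\[
\mathfrak{n}_i = \dim \SD_i(F) = \dim \SD^i_\perp(F) + \dim \SD^i_t(F) = m_i + (k-1),
\]
which is exactly the identity claimed.

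Since both ingredients are already packaged as earlier results in the paper, there is no genuine obstacle; the only ``content'' of the lemma is recording the numerical consequences of Lemma \ref{Dsplits} for later use. The slightly delicate point to double-check is the strict inequality $\mathfrak{n}_{r-1}<n$, which is built into Definition \ref{formalBG} precisely to distinguish the top step of the Lie flag from the lower ones; this is what prevents $\SD_t^i$ from collapsing into $\SD_\perp^i$ in the range $i<r$ and is thus what makes the direct-sum decomposition in Lemma \ref{Dsplits} available.
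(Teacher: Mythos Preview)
Your proposal is correct and matches the paper's own proof essentially line for line: part i) is read off from Definition \ref{formalBG}, the equality in ii) comes from the direct-sum decomposition and dimension count of Lemma \ref{Dsplits}, and the strict inequality $\mathfrak{n}_i<n$ for $i<r$ is exactly the observation that one is not at the last step of the flag. The only cosmetic difference is that the paper cites Equation (\ref{conditionBG2}) alongside Lemma \ref{Dsplits}, which is redundant since the latter already contains the needed direct-sum statement.
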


\begin{proof}
The first statement $i)$ just follows from $F$ being formally bracket generating of $\step$-$r$.  The first equality in $ii)$ follows from Equation (\ref{conditionBG2}) together with Lemma \ref{Dsplits}, whereas the inequality $\mathfrak{n}_i<n$ follows from the fact that we are not in the last step; i.e. $\SD_i\neq \SD_r=T\R^n$.
\end{proof}

\subsubsection{Principal subspaces and adapted frames}

The following two Lemmas state that the only brackets in $\mathfrak{Br}^{\mathfrak{m}}$ that depend on $j^{\mathfrak{m}-1}_t(F)$ are the ones in $\mathfrak{Br}^{\mathfrak{m}}_t$.

\begin{lemma}\label{Lemma:Dperp} Take a $\partial_t$-adapted jet $F=(F_i)_{i=1}^k\in J^{r-1}( \overline{\bigoplus_k T\R^n})$. Consider a length $\mathfrak{m}\geq 2$ formal bracket of the form $A_J(F)$, $J\in\tilde{\mathfrak{I}}_{\mathfrak{m}}$ (recall Eq. (\ref{Br2types})). Then $A_J(F)$ does not depend on $j^{\mathfrak{m}-1}_t(F)$; i.e. the bracket expression factors, for adapted $r-1$-jets, through some map $h$ as follows:
\begin{equation*}\label{BracketR2}
A_J(F)={h}\circ  j^{\bot(\partial_t,\mathfrak{m-1})}(F)
\end{equation*}

\end{lemma}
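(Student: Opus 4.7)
The plan is to show that this follows essentially directly from Lemma \ref{PerpendicularEffect1}, once one unpacks the combinatorial meaning of the condition $J \in \tilde{\mathfrak{I}}_{\mathfrak{m}}$ for a length-$\mathfrak{m}$ multi-index. No new bracket computation is needed; the content of the lemma is that adaptedness packages exactly the hypothesis of Lemma \ref{PerpendicularEffect1}.

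First I would observe that a multi-index $J$ of length exactly $\mathfrak{m}$ lies in $\tilde{\mathfrak{I}}_{\mathfrak{m}}$ precisely when no permutation of $J$ equals $(\underbrace{1,\ldots,1}_{\mathfrak{m}-1}, m)$ for any $m \leq k$. Taking $m = 1$ rules out $J = (1,\ldots,1)$, while taking $m \geq 2$ rules out every $J$ in which the index $1$ appears exactly $\mathfrak{m}-1$ times. Consequently the number of entries of $J$ equal to $1$ is at most $\mathfrak{m} - 2$, so at least two entries of $J$ belong to $\{2,\ldots,k\}$. Call these two entries $a_i$ and $a_j$ (they are allowed to be equal as labels, which is harmless in view of the remark after Lemma \ref{PerpendicularEffect1}).

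Next I would use the adaptedness hypothesis. Since $F$ is $\partial_t$-adapted, the expressions in Equations (\ref{AdaptedInCoordinates}) and (\ref{AdaptedInCoordinates2}) together with Remark \ref{orth} give $j^0(F_a) \perp \partial_t$ for every $a \geq 2$. In particular
\[ j^0(F_{a_i}) \perp \partial_t \quad \text{and} \quad j^0(F_{a_j}) \perp \partial_t, \]
so the two components of $F$ singled out in the previous paragraph fulfil hypothesis (\ref{OthogonalityConditionComponents}) of Lemma \ref{PerpendicularEffect1}.

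Finally I would conclude by invoking Lemma \ref{PerpendicularEffect1} with $\mathfrak{r} = \mathfrak{m}$: the length-$\mathfrak{m}$ bracket $A_J(F)$ factors as $A_J(F) = h \circ j^{\bot(\partial_t,\mathfrak{m}-1)}(F)$, i.e.\ it does not depend on the pure order-$(\mathfrak{m}-1)$ formal derivative $j^{\mathfrak{m}-1}_t(F)$. The main (and essentially only) obstacle is the combinatorial bookkeeping showing that membership in $\tilde{\mathfrak{I}}_{\mathfrak{m}}$ forces at least two non-$1$ entries; once that is in place the lemma is immediate from the already-established perpendicularity result.
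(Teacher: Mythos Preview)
Your proof is correct and follows essentially the same approach as the paper: you unpack the combinatorial condition $J\in\tilde{\mathfrak{I}}_{\mathfrak{m}}$ to conclude that at least two entries of $J$ are different from $1$, then use $\partial_t$-adaptedness (Remark \ref{orth}) to get the orthogonality hypothesis of Lemma \ref{PerpendicularEffect1} and conclude. The only difference is that you spell out the counting argument in more detail than the paper does.
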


\begin{proof}
Let $J=(\ell_{\mathfrak{m}}, \cdots, \ell_1)$. Since $J\in\tilde{\mathfrak{I}}_{\mathfrak{m}}$, then the expression $A_J(F)=[F_{\ell_\mathfrak{m}},[ \cdots [F_{\ell_3},[F_{\ell_2}, F_{\ell_1}]],\cdots]$ belongs to ${\mathfrak{Br}}_{\perp, t}^{\mathfrak{m}}$ and it involves at least two components $F_{\ell_a}, F_{\ell_b}$ different from $F_1$. Now, since $F$ is $\partial_t$-adapted, then $j^0(F_{\ell_a}), j^0(F_{\ell_b})\perp\partial_t$ and the claim follows from Lemma \ref{PerpendicularEffect1}.
\end{proof}

By Remark \ref{BracketsOpposite}, brackets of the form $A_I(F)$ with $I=(t,\cdots, t, j)\in \mathfrak{I}_{\mathfrak{i}}^t$ constitute a set of generators of the whole $\SD_t^i(F)$. We thus focus our study on those jets in the following Lemma.

\begin{lemma}\label{Lemma:Dt} Let $I=(t,\cdots, t, j)\in \mathfrak{I}_{\mathfrak{m}}^t$ where $\mathfrak{m}\geq 2$, $j\neq t$. Then the bracket $A_I(\cdot)$ admits the following decomposition for $\partial_t$-adapted jets  $F=(F_i)_{i=1}^k\in J^{r-1}( \overline{\bigoplus_k T\R^n})$:

\begin{equation}\label{BracketT}
A_I(F)= P^{\mathfrak{m}-1}_t(F_j)+\nu_j(F_j)
\end{equation} 

(recall Definition \ref{FormalDT}) where $\nu_j(F_j)$ is an expression not depending on $j^\mathfrak{m-1}_t(F)$ or, more precisely: there is a map $h_j$ such that for $\partial_t$-adapted jets $F$, $\nu_j(F_j)={h}_j\left( j^{\bot(\partial_t,\mathfrak{m-1})}(F_j)\right)$.
\end{lemma}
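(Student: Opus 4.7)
The plan is a straightforward induction on $\mathfrak{m}$, leveraging Lemma \ref{LemmaSymbolBrackets} together with the adapted conditions $u_1^1=1$ and $u_j^1=0$ (for $j\neq 1$) from (\ref{AdaptedInCoordinates})--(\ref{AdaptedInCoordinates2}). The key bookkeeping observation driving the induction is the following: if a differential polynomial $E$ involves no pure-$t$ jet-coordinate $u^a_{b,(t,\ldots,t)}$ of order $\ell$, then neither $D_t(E)$ nor $D_s(E)$ for $s\neq t$ involves any pure-$t$ jet-coordinate of order $\ell+1$. Indeed $D_s(u^a_{b,J})=u^a_{b,(J,s)}$ is pure-$t$ of length $\ell+1$ only when $s=t$ and $J$ is pure-$t$ of length $\ell$, which is excluded either by the hypothesis on $E$ or by $s\neq t$.

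For the base case $\mathfrak{m}=2$, I would apply the simplified expression (\ref{BracketL2}) for $[F_1,F_j]$. Substituting the adapted conditions kills the $u_j^1 D_t(u_1^i)$ term and singles out $u_1^1 D_t(u_j^i)=u^i_{j,(t)}=P^1_t(F_j)^i$. The residual sum $\sum_{s\geq 2}\bigl(u_1^s\, u^i_{j,(s)}-u_j^s\, u^i_{1,(s)}\bigr)$ involves only order-zero coordinates paired with mixed order-one derivatives, so it is free of $j^1_t(F)$ and provides $\nu_j$ in this case.

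For the inductive step, I would use the recursion $A_I(F)=[F_1,B^{(\mathfrak{m}-1)}]$, where $B^{(\mathfrak{m}-1)}:=A_{(1,\ldots,1,j)}(F)$ carries $\mathfrak{m}-2$ copies of $1$, and the inductive hypothesis provides $B^{(\mathfrak{m}-1),i}=u^i_{j,(t,\ldots,t)}+\nu^{(\mathfrak{m}-1),i}$ with $\nu^{(\mathfrak{m}-1),i}$ a differential polynomial of total order $\leq\mathfrak{m}-2$ avoiding $j^{\mathfrak{m}-2}_t(F)$. Expanding via (\ref{BracketCoord2}) and using $u_1^1=1$ gives
\[A_I(F)^i=D_t(B^{(\mathfrak{m}-1),i})+\sum_{s\geq 2}u_1^s\,D_s(B^{(\mathfrak{m}-1),i})-\sum_s B^{(\mathfrak{m}-1),s}\,D_s(u_1^i).\]
The $D_t$ term produces $P^{\mathfrak{m}-1}_t(F_j)^i$ plus $D_t(\nu^{(\mathfrak{m}-1),i})$, the latter avoiding $j^{\mathfrak{m}-1}_t(F)$ by the key observation; the $s\geq 2$ sum avoids it for the same reason. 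In the final sum, each summand $B^{(\mathfrak{m}-1),s}\,D_s(u_1^i)$ is a polynomial whose constituent jet-coordinate factors have order at most $\max(\mathfrak{m}-2,1)<\mathfrak{m}-1$ (for $\mathfrak{m}\geq 3$), and a pure-$t$ jet-coordinate of order $\mathfrak{m}-1$ is a single coordinate rather than a product, so this contribution cannot contain one either. Collecting all remainders yields $\nu_j$, and the asserted factorization through $h_j$ follows because every jet-coordinate appearing in $\nu_j$ belongs to $j^{\bot(\partial_t,\mathfrak{m}-1)}(F)$.

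The main obstacle is keeping the inductive hypothesis sufficiently strong: one must track not only that $\nu^{(\mathfrak{m}-1)}$ excludes $j^{\mathfrak{m}-2}_t(F)$ but also that it has total order $\leq\mathfrak{m}-2$, since otherwise Leibniz rule applied under $D_t$ could a priori spawn new pure-$t$ contributions of the critical order. With this strengthened hypothesis in place, the calculation reduces to routine multi-index tracking.
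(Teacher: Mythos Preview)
Your proof is correct. Both your argument and the paper's rest on the same inputs---the adapted conditions $u_1^1=1$, $u_j^1=0$ for $j\neq 1$, and the recursive bracket formula (\ref{BracketCoord2})---but they are organised differently. The paper proceeds in one shot: it writes $A_I(F)$ in simplified form and observes directly that a pure $D_t^{\mathfrak{m}-1}$ iteration applied to a single zero-order coordinate can only arise by picking up the $u_1^1$-factor at every one of the $\mathfrak{m}-1$ outer levels, which forces the target to be $u_j^i$ and gives exactly $(u_1^1)^{\mathfrak{m}-1}D_t^{\mathfrak{m}-1}(u_j^i)=P^{\mathfrak{m}-1}_t(F_j)^i$; the remainder is then automatically free of $j^{\mathfrak{m}-1}_t(F)$. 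Your inductive route unwinds the same count level by level, at the cost of having to strengthen the hypothesis (tracking the total order of $\nu^{(\mathfrak{m}-1)}$) so that Leibniz under $D_t$ does not reintroduce critical-order terms. The paper's version is terser; yours makes the bookkeeping more explicit and would generalise more mechanically if one needed finer control over the remainder.
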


\begin{proof}

Since $F$ is $\partial_t$-adapted, recall from  Eq. (\ref{AdaptedInCoordinates}) and (\ref{AdaptedInCoordinates2}) that we can write in jet-coordinates (introduced in Subsection \ref{jetcoordinates}):
\begin{align}\label{EqJetF1}
&F_t=(1, u_t^2, \cdots, u_t^n, \cdots, u_{t, J}^\ell,\cdots),  \quad 1\leq \ell\leq n,\quad J\in\NS_{r-1},\\ \label{EqJetF2}
&F_j=(0, u_j^2,\cdots, u_j^n, \cdots, u_{j, J}^\ell,\cdots),   \quad 1\leq \ell\leq n, \quad J\in\NS_{r-1},
\end{align}
where $F_t=F_1$ (recall Remark \ref{RemarkPartialt}). We expand the bracket $A_I(F)$ by writing it in simplified form (Definition \ref{SimplifiedForm}) following Lemma \ref{LemmaSymbolBrackets} and we isolate the terms involving order-$\mathfrak{m}-1$ derivatives with respect to $\partial_t$ from the rest of the expression as follows:

\begin{equation}\label{desarrollo}
A_I(F)=[F_t,[F_t,\cdots, [F_t, F_j]\cdots]]=\sum_{i=1}^n \left({u_t^t}\right)^{\mathfrak{m}-1}\cdot\underbrace{D_t\circ\cdots\circ D_t}_{\mathfrak{m}-1} (u_j^i) \cdot \partial_i + \sum_{i=1}^n \nu^i_j\cdot \partial_i.
\end{equation}

Note that each term $ \left({u_t^t}\right)^{\mathfrak{m}-1}\cdot D_{(t,\cdots, t)}(u_j^i)$  on the right-hand side comes from the contribution of the $\mathfrak{m}-1$ $u_t^1$-entries of each $F_t$ together with the entry $u_j^i$ from $F_j$. Additionally, note from Eq. (\ref{EqJetF1}) that $u_t^1=1$.

Since $F_j$ does not have $u_j^t$-component  ($u_j^t=0$ by Eq. (\ref{EqJetF2})), there are no more order-$\mathfrak{m}-1$ derivations with respect to $\partial_t$ in the expression. So, $\sum_{i=1}^n \nu^i_j$ is just a polynomial expression not containing a single term involving order-$\mathfrak{m}-1$ derivatives $D_t\circ\cdots\circ D_t$.

By noting that $\sum_{i=1}^n D_t\circ\cdots \circ D_t (u_j^i)\cdot\partial_i= \sum_{i=1}^n u_{i, (t,\cdots, t)}^j$ and by denoting $\nu_j:=\sum_{i=1}^n \nu_j^i$, we conclude that $A_I(F)=P^{\mathfrak{m}-1}_t(F_j) +\nu_j$, where $\nu_j$ does not depend on $j^\mathfrak{m-1}_t(F)$ or, equivalently, is of the form $\nu_j={h}_j\left( j^{\bot(\partial_t,\mathfrak{m-1})}(F_j)\right)$ for some map $h_j$.
\end{proof}

\begin{remark}\label{Terminology}
Lemma \ref{Lemma:Dperp} implies that $\SD^i_\perp(F)$ only depends on  $j^{\bot(\partial_t,{i-1})}(F_j)$ whereas, by Lemma \ref{Lemma:Dt} together with Remark \ref{BracketsOpposite}, $\SD^i_t$ can be expressed as:
\begin{equation}
\SD^i_t(F)=\langle P^{i-1}_t(F_2) + \nu_2,\cdots, P^{i-1}_t(F_k)+\nu_k \rangle, \quad \text{ where }\nu_j={h}_j\left( j^{\bot(\partial_t,{i-1})}(F_j)\right) \text{ for some map }h_j.
\end{equation}

\end{remark}
Recall  (Lemma \ref{SplittingDt} and Lemma \ref{FormalDT}) that $P_t^i(\cdot)$ just encodes the formal order-$i$ pure derivative in the $\partial_t$-direction. We can thus give the following equivalent description of a principal subspace  $\Pr^{i}_{\partial_t, {F}}$  in terms of $\SD^{i+1}_\perp$ and $\SD^{i+1}_t$:

\begin{equation}\label{PpalSubspace}
  \Pr^{i}_{\partial_t, {F}} = \left\{ G\in  J^{i}\left( \overline{\bigoplus_k T\R^n}\right)\ \middle\vert \begin{array}{l}
    j^{\bot(\tau, i)}(G)=j^{\bot(\tau, i)}(F), \\
    \SD_{i+1}(G)=\overbrace{\color{black}\SD^{i+1}_\perp({F})}^{\color{black}\SD^{i+1}_{\perp}(G)}+\color{black}\overbrace{\color{black}\langle \color{blue} w_1\color{black}+\nu_2\color{blue},\cdots, w_{k-1}\color{black}+\nu_k\color{blue}\color{black}\rangle,}^{\color{black}\SD^{i+1}_t(G)} \color{black}\quad \color{blue} w_i\in\R^n \color{black}
  \end{array}\right\},
\end{equation}

where $\nu_j={h}_j\left( j^{\bot(\partial_t,{i-1})}(F_j)\right) \text{ for some map }h_j$. Blue has been used to denote the ``free'' variables that parametrize the space whereas ``black'' is reserved for the fixed variables determined by $F$.

\begin{remark}
The second condition in the description of the principal subspace $\Pr^{i}_{\partial_t, {F}}$ (Equation \ref{PpalSubspace}) is redundant, since the first condition $j^{\bot(\tau, i)}(G)=j^{\bot(\tau, i)}(F)$ already characterises $\Pr^{i}_{\partial_t, {F}}$. Nonetheless, we include it since it is descriptive and  will be useful in order to describe $  \mathcal{S}^{i-1}_{\partial_t, \tilde{F}}$ (see Equation (\ref{Sr})).
\end{remark}

\subsection{Ampleness of $\mathcal{S}^{\step -r}$}\label{AmplenessS}

This subsection is devoted to the proof of ampleness of $\mathcal{S}^{\step -r}$ for distributions of $\rank>2$ on manifolds of dimension $n\geq 3$. By the Fibered Ampleness Criterion (Proposition \ref{AmplenessCriterion}) this will imply Theorem \ref{AmplenessR} and, ultimately, Theorem \ref{mainthm}.

We will first state some lemmas about ampleness of subspaces of matrices that will be useful later in the proof of the Theorem. We isolate them here for clarity and because they are interesting on their own.

Let us fix some notation. Consider the space of $(\ell\times q)$-matrices with first $k$ columns fixed for some choice of $k<q$ vectors in $\R^q$ and denote it by $ \mathcal{A}_{\ell\times q} $. Denote by $ \overline{\mathcal{A}_{\ell\times q} }\subset \mathcal{A}_{\ell\times q} $ the subset of maximal rank matrices in $\mathcal{A}_{\ell\times q}$.  We will study ampleness of such subspace depending on the values of $k, \ell$ and $q$.

Let us begin studying the case $\ell=q$:

\begin{lemma}\label{MatricesGL}
If $q-k\geq 2$, then the space of maximal rank matrices with first $k$ columns fixed $ \overline{\mathcal{A}_{q\times q} }$ is ample within the space of all matrices with identical $k$ columns fixed $\mathcal{A}_{q\times q}$ for any choice of fixed columns.
\end{lemma}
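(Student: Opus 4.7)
The plan is to reduce ampleness of $\overline{\mathcal{A}_{q\times q}}$ inside $\mathcal{A}_{q\times q}$ to the already-proved ampleness of $GL(q-k)$ inside $\mathcal{M}_{(q-k)\times(q-k)}$ from Lemma \ref{LemmaGL}, which is applicable precisely because of the hypothesis $q-k\geq 2$. First I would dispense with the trivial case: if the fixed columns $c_1,\ldots,c_k$ are linearly dependent, then no completion of them to a $q\times q$ matrix can be invertible, so $\overline{\mathcal{A}_{q\times q}}=\emptyset$ and ampleness holds trivially by Definition \ref{def:ampleAffine}.

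Assume now $c_1,\ldots,c_k$ are linearly independent. Set $V:=\langle c_1,\ldots,c_k\rangle$ and choose any linear complement $W\subset\R^q$, so that $\R^q=V\oplus W$; let $\pi\colon\R^q\to W$ be the projection along $V$. The affine space $\mathcal{A}_{q\times q}$ is parametrized by the tuple of free columns $(v_1,\ldots,v_{q-k})\in(\R^q)^{q-k}$, which using the direct sum decomposes as $(\R^q)^{q-k}=V^{q-k}\oplus W^{q-k}$. A matrix in $\mathcal{A}_{q\times q}$ is invertible if and only if $\{c_1,\ldots,c_k,v_1,\ldots,v_{q-k}\}$ is a basis of $\R^q$, equivalently if and only if $\{\pi(v_1),\ldots,\pi(v_{q-k})\}$ is a basis of $W$. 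Under the natural identification $W^{q-k}\cong\mathcal{M}_{(q-k)\times(q-k)}$, this says that $\overline{\mathcal{A}_{q\times q}}$ corresponds precisely to the product $V^{q-k}\times GL(W)$ sitting inside $V^{q-k}\oplus W^{q-k}$.

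Now, since $q-k\geq 2$, Lemma \ref{LemmaGL} gives that $GL(q-k)$ is ample inside $\mathcal{M}_{(q-k)\times(q-k)}$, so each of its two connected components $GL^{\pm}(W)$ has convex hull equal to $W^{q-k}$. Because $V^{q-k}$ is connected, the path-connected components of $V^{q-k}\times GL(W)$ are exactly $V^{q-k}\times GL^{\pm}(W)$. To verify that each such component has convex hull equal to the whole $V^{q-k}\oplus W^{q-k}$, take any $(x,y)$ in this ambient space and write $y=\sum_i\lambda_i y_i$ as a convex combination with $y_i\in GL^{\pm}(W)$; then $(x,y)=\sum_i\lambda_i(x,y_i)$ exhibits $(x,y)$ as a convex combination of elements of $V^{q-k}\times GL^{\pm}(W)$, yielding the claim. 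There is no real obstacle in this argument beyond the linear-algebraic identification in the second paragraph; the only bookkeeping subtlety is tracking connected components through the product, which is trivialised by the connectedness of the $V^{q-k}$ factor.
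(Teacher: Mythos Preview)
Your proof is correct and follows essentially the same strategy as the paper: dispose of the dependent-columns case trivially, then quotient by the span of the fixed columns to reduce to Lemma~\ref{LemmaGL}. The paper's version is terser (it phrases the reduction as ``remove the first $k$ columns and rows'' and then immediately invokes ampleness of $GL(q-k)$), whereas you spell out the product decomposition $V^{q-k}\times GL(W)$ and verify explicitly that ampleness survives taking a product with a connected affine factor; this extra bookkeeping is a genuine (if small) improvement in rigor over the paper's one-line conclusion.
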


\begin{proof}

If the first fixed $k$ columns are not $k$ linearly independent vectors in $\R^q$ then the space $\overline{\mathcal{A}_{q\times q} }$ is empty and trivially ample.

Otherwise, consider the projection map $\tilde{\pi}:\mathcal{A}_{q\times q} \to \mathcal{M}_{q-k\times q-k}$ that projects any matrix $A_{q\times q}$ to its submatrix  $M_{q-k\times q-k}$ resulting from removing its first $k$ columns and rows. By regarding the columns of a matrix $A$ as vectors in $\R^q$, this map can be thought as a quotient by the subspace spanned by the first $k$ column vectors. Ampleness now follows by ampleness of $GL(q-k)$ (Lemma \ref{LemmaGL}).
\end{proof}

We now study separately the case $\ell\neq q$. We will also assume in the statement that the first $k$ columns are linearly independent since, otherwise, ampleness follows trivially.

\begin{lemma}\label{MatricesThin}
Let $k<\ell < q$, then the space $ \overline{\mathcal{A}_{\ell\times q} }$ of maximal rank matrices with first $k$ columns linearly independent and fixed, is the complement of a thin singularity within the space of matrices with identical $k$ columns fixed $\mathcal{A}_{q\times q}$ and is, thus, ample.
\end{lemma}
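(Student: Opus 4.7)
The plan is to identify the complement $\Sigma := \mathcal{A}_{\ell \times q} \setminus \overline{\mathcal{A}_{\ell \times q}}$ as an algebraic subvariety of codimension $\geq 2$ inside the affine space $\mathcal{A}_{\ell \times q}$, and then to argue that any such ``thin'' complement is ample in the sense of Definition \ref{def:ampleAffine}. The first $k$ columns $v_1, \ldots, v_k$ are fixed and linearly independent, so they span a $k$-dimensional subspace $V \subset \R^\ell$; the free parameters are the remaining $q - k$ columns $u_{k+1}, \ldots, u_q \in \R^\ell$, making $\mathcal{A}_{\ell \times q}$ an affine space of dimension $\ell(q-k)$. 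A matrix lies in $\Sigma$ precisely when its columns fail to span $\R^\ell$, equivalently when the residues $\bar u_{k+1}, \ldots, \bar u_q$ in $W := \R^\ell/V \cong \R^{\ell-k}$ fail to span $W$.

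The codimension count reduces to the classical codimension formula for determinantal varieties. The linear submersion $\mathcal{A}_{\ell \times q} \to \mathcal{M}_{(\ell-k) \times (q-k)}$ sending $(u_{k+1}, \ldots, u_q) \mapsto (\bar u_{k+1}, \ldots, \bar u_q)$ has affine fibres of dimension $k(q-k)$, and $\Sigma$ is the preimage of the variety $\mathcal{D}$ of $(\ell-k) \times (q-k)$ matrices of rank $\leq \ell - k - 1$. The classical formula says that the rank-$\leq r$ locus in $\mathcal{M}_{m \times n}$ has codimension $(m-r)(n-r)$; with $m = \ell - k$, $n = q - k$, $r = \ell - k - 1$ this yields $\mathrm{codim}(\mathcal{D}) = q - \ell + 1$, and hence also $\mathrm{codim}_{\mathcal{A}_{\ell \times q}}(\Sigma) = q - \ell + 1$. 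The hypothesis $\ell < q$ is exactly what ensures that this codimension is $\geq 2$, establishing thinness.

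Once thinness is in place, the ampleness conclusion is routine. Since $\mathrm{codim}(\Sigma) \geq 2$, the open set $\overline{\mathcal{A}_{\ell \times q}} = \mathcal{A}_{\ell \times q} \setminus \Sigma$ is path-connected, so by Definition \ref{def:ampleAffine} one only needs to check that its convex hull fills all of $\mathcal{A}_{\ell \times q}$. Given any $M \in \mathcal{A}_{\ell \times q}$, I would pick any direction $N$ in the vector space of matrices with first $k$ columns zero such that the line $\{M + tN : t \in \R\}$ is not contained in $\Sigma$; such $N$ exist because $\Sigma$ is a proper subvariety. The line then meets $\Sigma$ at only finitely many values of $t$, so for any $t$ lying outside this finite set together with its negative, the identity $M = \tfrac{1}{2}(M + tN) + \tfrac{1}{2}(M - tN)$ realises $M$ as a convex combination of two elements of the (unique) path-component of $\overline{\mathcal{A}_{\ell \times q}}$.

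The only nontrivial input is the classical $(m - r)(n - r)$ codimension formula for determinantal varieties; everything else is a clean transcription, to an ordinary affine space, of the ``thin complement implies ample'' principle underlying Lemma \ref{thinSubsets}. I do not anticipate a genuine obstacle beyond ensuring that the bookkeeping of which columns are fixed matches the hypotheses $k < \ell < q$ at each step.
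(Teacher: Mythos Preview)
Your proof is correct and follows essentially the same approach as the paper: both rely on the classical codimension formula for determinantal varieties to conclude thinness, and hence ampleness. The paper's proof is a single sentence citing this fact from \cite[2.2.1]{EM}, whereas you have spelled out the reduction to the quotient $\R^\ell/V$ to account for the fixed columns and have given an explicit convex-combination argument for ampleness in the affine sense; both additions are sound and make the argument more self-contained, but they do not constitute a genuinely different route.
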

\begin{proof}
The result follows from the fact that the space of non-maximal rank $(\ell\times q)$-matrices has codimension greater than $1$ (as a stratified subset) within the space of $(\ell\times q)$-matrices (see, for example, \cite[2.2.1]{EM}) .
\end{proof}

Note that if the first fixed $k$ columns constitute a maximal rank submatrix themselves, then ampleness follows trivially. This is encoded by the following  Lemma whose proof is trivial:

\begin{lemma}\label{MatricesTrivial}
Let $k= \ell < q$. The space $ \overline{\mathcal{A}_{\ell\times q} }$ of maximal rank matrices with first $k$ columns linearly independent and fixed coincides with the whole space of matrices with identical $k$ columns fixed $\mathcal{A}_{q\times q}$ and is, thus, trivially ample.
\end{lemma}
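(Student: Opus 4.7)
The plan is to observe that the hypothesis $k=\ell$ is so strong that it forces $\overline{\mathcal{A}_{\ell\times q}}$ to coincide with the entire ambient space $\mathcal{A}_{\ell\times q}$, after which ampleness is immediate from Definition \ref{def:ampleAffine}.

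First I would recall that for an $\ell\times q$ matrix with $\ell<q$, the maximal possible rank is $\ell$ (the number of rows). Next I would use the hypothesis that the first $k$ columns are linearly independent together with $k=\ell$: this means we have $\ell$ linearly independent vectors sitting in the $\ell$-dimensional space $\R^\ell$, so these columns already span $\R^\ell$. Consequently, for \emph{any} choice of the remaining $q-k$ columns, the full column space of the resulting $\ell\times q$ matrix equals $\R^\ell$, so the matrix has rank $\ell$, which is maximal.

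This shows the inclusion $\mathcal{A}_{\ell\times q}\subset \overline{\mathcal{A}_{\ell\times q}}$; the reverse inclusion is tautological, hence $\overline{\mathcal{A}_{\ell\times q}}=\mathcal{A}_{\ell\times q}$. Since the ambient space is an affine space and $\overline{\mathcal{A}_{\ell\times q}}$ is the whole of it, its unique path-connected component has convex hull equal to $\mathcal{A}_{\ell\times q}$, so the set is ample (trivially, in the sense of Definition \ref{def:ampleAffine}).

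There is no main obstacle: the content of the lemma is purely bookkeeping, and the only thing to be careful with is to read the hypothesis correctly (the first $k$ fixed columns are linearly independent \emph{in $\R^\ell$}, where $\ell=k$). The lemma is stated separately merely to round out the case analysis begun in Lemma \ref{MatricesGL} and Lemma \ref{MatricesThin}, which together with this one cover the ranges $k=\ell=q$ (handled by Lemma \ref{MatricesGL} via $q-k\geq 2$), $k<\ell<q$ (Lemma \ref{MatricesThin}), and the present case $k=\ell<q$.
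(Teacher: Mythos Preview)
Your proof is correct and is precisely the reasoning the paper intends: it declares the proof ``trivial'' and gives no argument, and you have spelled out exactly why (the $k=\ell$ linearly independent fixed columns already span $\R^\ell$, so every completion has maximal rank $\ell$, whence $\overline{\mathcal{A}_{\ell\times q}}=\mathcal{A}_{\ell\times q}$ and ampleness is trivial in the sense of Definition~\ref{def:ampleAffine}).

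One small inaccuracy in your closing commentary (not in the proof itself): Lemma~\ref{MatricesGL} does \emph{not} handle the case $k=\ell=q$; it handles square matrices $\ell=q$ with $q-k\geq 2$, i.e.\ at least two free columns. The case $k=\ell=q$ would give $q-k=0$ and is never needed in the paper. The trichotomy actually used later is: $\ell=q$ with $q-k\geq 2$ (Lemma~\ref{MatricesGL}), $k<\ell<q$ (Lemma~\ref{MatricesThin}), and $k=\ell<q$ (the present lemma).
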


The case $\ell=q$ and $k=q-1$ is left for the last Section of the paper (see Lemma \ref{MatricesHyperplanes}).

Let us finally prove ampleness of $\mathcal{S}^{\step -r}\subset J^{r-1}\left( \overline{\bigoplus_k TM}\right)$ for $k>2$.

\begin{theorem}\label{SAmple}
 $\mathcal{S}^{\step -r}\subset J^{r-1}\left( \overline{\bigoplus_k TM}\right)$ is ample if $k>2$.
\end{theorem}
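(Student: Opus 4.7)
The plan is to check ampleness of $\mathcal{S}^{\step-r}$ at every principal direction and every level $1\le i\le r-1$; Theorem \ref{AmplenessR} will then follow from the Fibered Ampleness Criterion (Proposition \ref{AmplenessCriterion}). Working locally with $M=\R^n$, fix $F\in\mathcal{S}^{\step-r}$, a direction $\partial_t$ (identified with the first coordinate direction by Remark \ref{RemarkPartialt}), and $i$. If $\partial_t\perp\SD_F$ at the base point, Corollary \ref{TrivialAmpleness} gives ampleness for free. Otherwise $\partial_t$ is non-normal, and by Lemma \ref{CondicionCriterio} combined with Lemma \ref{ExistenceAdapted} I may replace $F$ by an adapted lift of its projection to $J^{r-1}(\Gr_k(T\R^n))$, which still lies in $\mathcal{S}^{\step-r}$ since this relation is the pullback of $\mathcal{R}^{\step-r}$ (by Lemma \ref{independenceLieFlagFrame}).

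Using the explicit parametrisation of the principal subspace from Equation~(\ref{PpalSubspace}), $\Pr^i_{\partial_t,F}$ is coordinatised by $k$ free vectors $v=P^i_t(F_1), w_1=P^i_t(F_2),\dots,w_{k-1}=P^i_t(F_k)$ in $\R^n$. The conditions at levels $\le i$ depend only on the fixed $(i-1)$-jet of $F$, so they hold automatically. Granting that the level-$j$ conditions for $j>i+1$, which involve higher-order data still free in $J^{r-1}$, can be realised by a generic lift (an openness assertion), membership in $\mathcal{S}^i_{\partial_t,F}$ reduces to the single level-$(i+1)$ condition
\[ \dim\bigl(V+\langle w_1+\nu_2,\dots,w_{k-1}+\nu_k\rangle\bigr)=\mathfrak{n}_{i+1}, \]
where $V:=\SD^{i+1}_\perp(F)$ and the translations $\nu_j$ are fixed on the principal subspace, in the spirit of Remark \ref{Terminology}. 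Since $v$ does not appear, ampleness of $\mathcal{S}^i_{\partial_t,F}$ in $\Pr^i_{\partial_t,F}$ is equivalent to ampleness of the set of $(k-1)$-tuples $(w_1,\dots,w_{k-1})\in(\R^n)^{k-1}$ whose images in $\R^n/V$ span a subspace of dimension $\mathfrak{n}_{i+1}-\dim V$.

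I then split on the step. For intermediate steps $i+1<r$, Lemma \ref{CasesRanks}(ii) yields $\dim V+(k-1)=\mathfrak{n}_{i+1}<n$, so $\dim(\R^n/V)\ge k>k-1$. The failure locus---tuples whose images in $\R^n/V$ are linearly dependent---has codimension $\ge 2$ in $(\R^n)^{k-1}$, and Lemma \ref{MatricesThin} provides ampleness. For the top step $i+1=r$, write $d:=n-\dim V\in\{0,\dots,k-1\}$; the condition becomes that the $k-1$ vectors span $\R^d$. If $d=0$ it is vacuous; if $1\le d\le k-2$ the failure locus again has codimension $\ge 2$ and Lemma \ref{MatricesThin} applies; if $d=k-1$ the condition is that a $(k-1)\times(k-1)$ matrix be invertible, handled by Lemma \ref{MatricesGL} (ultimately Lemma \ref{LemmaGL}) provided $k-1\ge 2$.

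The substantive obstacle, and the unique place where the hypothesis $k>2$ enters, is this last subcase $d=k-1$ of the top step: ampleness there amounts to ampleness of $GL(k-1)\subset\mathcal{M}_{(k-1)\times(k-1)}$, which holds by Lemma \ref{LemmaGL} for $k\ge 3$ but fails for $k=2$, since $GL(1)=\R^*\subset\R$ is disconnected with each component convex. This failure is fully consistent with Theorem \ref{NonAmplenessRank2}. A subsidiary technical point to justify is the lifting statement invoked above---that every $G\in J^i$ satisfying the level-$(i+1)$ condition can be extended to a full element of $\mathcal{S}^{\step-r}\subset J^{r-1}$---but this is standard: the additional conditions at higher orders cut out proper stratified subsets in the higher-order jet data, so a generic extension always exists.
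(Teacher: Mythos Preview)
Your proposal is correct and follows essentially the same route as the paper's proof: reduce to adapted jets via Lemma~\ref{CondicionCriterio}, express the principal condition as a rank condition on the $k-1$ free vectors $w_j=P^i_t(F_{j+1})$ modulo the fixed subspace $\SD^{i+1}_\perp$, and then run the identical case split (thin complement for intermediate steps, $GL(k-1)$ for the top step with $d=k-1$). You are slightly more explicit than the paper on two points---you note that the extra free variable $v=P^i_t(F_1)$ does not enter the condition (the paper silently suppresses it in equation~(\ref{PpalSubspace})), and you flag the lifting issue identifying $\pi^{r-1}_i(\mathcal{S}^{\step-r})$ with the level-$\le i{+}1$ conditions, which the paper takes for granted in writing down equation~(\ref{Sr})---but neither changes the architecture of the argument.
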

\begin{proof}
Since convex integration works locally and is implemented chart by chart, we assume $M=\R^n$. Consider a point $p\in\R^n$ and take $F=(F_1,\cdots, F_k)\in J^{r-1}\left( \overline{\bigoplus_k T\R^n}\right)$ over $p$. Take a principal codirection $\eta\in T_p^*\R^n$ which, by the duality given by the Euclidean metric, defines a principal direction $\partial_t$ (recall Remark \ref{IntegrableHyperplanes}). We will check ampleness with respect to $\partial_t$.

 By the Reduction Step 2, in order to check ampleness, we can assume that $\partial_t$ is non-normal to $\SD=\langle j^0(F_1),\cdots, j^0(F_k)\rangle$.
Choose another $\tilde{F}=(\tilde{F}_1,\cdots, \tilde{F}_k)\in J^{r-1}\left( \overline{\bigoplus_k TM}\right)$ such that $\pi^r(\tilde{F})=\pi^r(F)$ and so that it is an adapted $r-1$-jet with respect to the direction $\partial_t$ (Lemma \ref{ExistenceAdapted}); i.e.:

\begin{itemize}
\item[i)] $\pi^{r-1}(\tilde{F})=\pi^{r-1}(F)$; i.e. they define the same element in $J^{r-1}\left(\Gr_k(TM)\right)$ and
\item[ii)] $j^0(\tilde{F}_1)$ is the orthogonal projection of $\partial_t$ to $\SD$ with respect to the Euclidean metric of $\R^n$ .
\end{itemize}

For the ease of notation, we still denote by $\mathcal{S}^{\step -r}$  the projected relation $\pi^{r-1}_{i-1}\left(\mathcal{S}^{\step -r}\right)$. Recall that, by Lemma \ref{CondicionCriterio}, $\mathcal{S}^{\step -r}_{\partial_t, F}\subset\Pr_{\partial_t, F}^{i-1}$ is ample if and only $\mathcal{S}^{\step -r}_{\partial_t, \tilde{F}}\subset\Pr_{\partial_t, \tilde{F}}^{i-1}$ is ample. So, we will check ampleness of the latter.

Consider the subspaces $\SD^i_\perp(\tilde{F})$ and $\SD_t^i(\tilde{F})$ associated to $\tilde{F}$. By Equation (\ref{PpalSubspace}) we can express the principal subspace as:

\begin{equation}\label{PpalSubspaceA}
  \Pr^{i-1}_{\partial_t, {\tilde{F}}} = \left\{ G\in  J^{i-1}\left( \overline{\bigoplus_k T\R^n}\right)\ \middle\vert \begin{array}{l}
    j^{\bot(\partial_t, i-1)}(G)=j^{\bot(\partial_t, i-1)}(\tilde{F}), \\
    \SD_{i}(G)=\overbrace{\color{black}\SD^{i}_\perp(\tilde{F})}^{\color{black}\SD^{i}_{\perp}(G)}+\color{black}\overbrace{\color{black}\langle \color{blue} w_1\color{black}+\nu_2\color{blue},\cdots, w_{k-1}\color{black}+\nu_k\color{blue}\color{black}\rangle,}^{\color{black}\SD^{i}_t(G)} \color{black}\quad \color{blue} w_i\in\R^n \color{black}
  \end{array}\right\}
\end{equation}

Combining (\ref{PpalSubspaceA}) with (\ref{conditionBG}) the intersection $\mathcal{S}^{i-1}_{\partial_t, \tilde{F}}:=\Pr^{i-1}_{\partial_t, {\tilde{F}}}\cap \mathcal{S}^{\step -r}$ of the principal subspace with $\mathcal{S}^{\step -r}$ reads as:

\begin{equation}\label{Sr}
  \mathcal{S}^{i-1}_{\partial_t, \tilde{F}} = \left\{ G\in  J^{i-1}\left( \overline{\bigoplus_k T\R^n}\right)\ \middle\vert \begin{array}{l}
j^{\bot(\partial_t, i-1)}(G)=j^{\bot(\partial_t, i-1)}(\tilde{F}), \\
\dim \big( \color{black}\langle \overbrace{\color{blue}w_1\color{black}+\nu_2\color{blue},\cdots, w_{k-1}\color{black}+\nu_{k}}^{\color{blue} w_i \text{ are free, } \color{black} \nu_i \text{ are frozen.} }\rangle\color{black}/\overbrace{\color{black}\SD^{i}_\perp(\tilde{F})}^{\color{black}\text{frozen data}}\big)\color{black}=\overbrace{\color{black}\mathfrak{n}_i-\dim(\SD^{i}_\perp(\tilde{F}))}^{\text{frozen data}}, \quad \color{blue}  w_i\in\R^n \color{black}
  \end{array}\right\}
\end{equation}

Note that all data are fixed except for the $k-1$ vectors $w_i \in\R^n$ corresponding to the pure formal partial derivatives of order $i-1$ with respect to the principal direction $\partial_t$. Denote $m_i:=\dim(\SD^i_\perp(\tilde{F}))$. 

Note that in order to check ampleness of  $\mathcal{S}^{i-1}_{\partial_t, \tilde{F}}$ within $ \Pr^{i-1}_{\partial_t, {F}}$, the constants $\nu_2,\cdots, \nu_k$ are irrelevant since they are fixed and can thus be subsummed within the free variables \color{blue}$w_1,\cdots, w_{k-1}$\color{black}. We can, thus, assume without loss of generality that $\nu_2=\cdots=\nu_k=0$.

Take a generating set $\lbrace v_1,\cdots, v_{m_i}\rbrace$ of $\SD^i_\perp(\tilde{F})$; i.e. so that $\langle v_1,\cdots, v_{m_i}\rangle=\SD^i_\perp(\tilde{F})$. Define $\mathcal{B}_{n\times(m_i+k-1)}\subset \mathcal{M}_{n\times(m_i+k-1)}$ as the subset of matrices $B\in\mathcal{M}_{n\times(m_i+k-1)}$ whose first $m_i$ columns are formed by the vectors $v_1,\cdots, v_{m_i}$.
Additionally, denote by $\overline{\mathcal{B}}_{n\times(m_i+k-1)}\subset\mathcal{B}_{n\times(m_i+k-1)}$ the subset of matrices in $\mathcal{B}_{n\times(m_i+k-1)}$ of $\rank(B)=\mathfrak{n}_i$, where $\mathfrak{n}_i$ denotes the $i$-th entry of a maximal growth vector (Remark \ref{notationni}).

\begin{figure}[h]
	\includegraphics[scale=1]{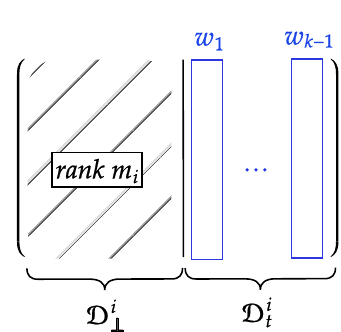}
	\centering
	\caption{The subspace $\mathcal{B}_{n\times(m_i+k-1)}$ is formed by those matrices with first $m_i$ columns fixed (striped area) and generating the $m_i$-dimensional subspace $\SD^i_\perp(\tilde{F})$. The other $k-1$ columns (in blue) are not fixed. Checking ampleness of $\mathcal{S}^{i-1}_{\partial_t, \tilde{F}}\subset\Pr_{\partial_t, \tilde{F}}^{i-1}$ tantamounts to checking ampleness of the space of $\mathfrak{n}_i$-rank matrices $\overline{\mathcal{B}}_{n\times(m_i+k-1)}\subset \mathcal{B}_{n\times(m_i+k-1)}$.}\label{matrixAmple}
\end{figure}

Note that showing ampleness of $\mathcal{S}^{i-1}_{\partial_t, \tilde{F}}\subset\Pr_{\partial_t, \tilde{F}}^{i-1}$ tantamounts to showing ampleness of the space $\overline{\mathcal{B}}_{n\times(m_i+k-1)}\subset\mathcal{B}_{n\times(m_i+k-1)}$. We distinguish several cases in order to conclude:

\begin{itemize}

\item[i)] If $i\neq r$, then $m_i+k-1=\mathfrak{n}_i<n$ (Lemma \ref{CasesRanks}) and ampleness of $\overline{\mathcal{B}}_{n\times(m_i+k-1)}\subset\mathcal{B}_{n\times(m_i+k-1)}$ tantamounts to ampleness of the subset of $\rank=m_i+k-1<n$ matrices (maximal rank) with $m_i$ columns fixed inside the space of all matrices with the same columns fixed. This subset has a {thin} complement by Lemma \ref{MatricesThin} and, thus, ampleness follows.

\item[ii)] If $i=r$, then $\mathfrak{n}_i=n$ (Lemma \ref{CasesRanks}) and ampleness of $\overline{\mathcal{B}}_{n\times(m_r+k-1)}\subset\mathcal{B}_{n\times(m_r+k-1)}$ tantamounts to ampleness of the subset of $\rank=n$ matrices (maximal rank) with $m_r$ columns fixed inside the space of all matrices with the same columns fixed. Note that the case $n>m_r+k-1$ cannot take place since this would contradict the fact that $F$ was a formal solution in the first place. We have thus two possible cases:

\begin{itemize}

\item[ii.a)] If $n<m_r+k-1$ then there are two possibilities. If $m_r=n$, then ampleness holds trivially by Lemma \ref{MatricesTrivial}. Otherwise,  the space of maximal rank matrices with $m_r$ columns fixed has a thin complement in $\mathcal{B}_{n\times(m_r+k-1)}$ by Lemma \ref{MatricesThin} and ampleness follows as well.

\item[ii.b)] If  $n=m_r+k-1$, ampleness follows since the space of maximal rank matrices with $m_r$ columns fixed in $\mathcal{B}_{n\times(m_r+k-1)}$ is ample if $k-1\geq 2$ by Lemma \ref{MatricesGL} (although it does not have a thin complement, recall Remark \ref{NonThinnesOfGL}).

\end{itemize}

\end{itemize}
This completes the proof.\end{proof}

The following result, which we isolate here since it has interest on its own, follows from the proof of Theorem \ref{SAmple} (precisely, from case $i)$ in the case distinction).
\begin{lemma}
Fix $i<r$. Let $p\in\R^n$ and let $\partial_t\in T_p\R^n$ be a non-normal direction. Then for a $\partial_t$-adapted jet $\tilde{F}$ over $p$, $\mathcal{S}^{i-1}_{\partial_t, \tilde{F}}$ has a thin complement within $\Pr_{\partial_t, \tilde{F}}^{i-1}$.
\end{lemma}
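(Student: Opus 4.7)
The plan is simply to extract the stronger intermediate conclusion already established in case $i)$ of the proof of Theorem \ref{SAmple}, which in fact yields thinness and not merely ampleness.

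First I would reuse the affine identification constructed in the proof of Theorem \ref{SAmple}. After replacing the original jet by the $\partial_t$-adapted jet $\tilde{F}$ (legitimate by Lemma \ref{CondicionCriterio}), the principal subspace $\Pr^{i-1}_{\partial_t,\tilde{F}}$ is parametrised affinely by $k-1$ free vectors $w_1,\ldots,w_{k-1}\in\R^n$, which encode the formal order-$(i-1)$ pure $\partial_t$-derivatives of $\tilde{F}_2,\ldots,\tilde{F}_k$. Through Equation (\ref{Sr}), after absorbing the frozen vectors $\nu_j$ into the free $w_{j-1}$ and fixing a generating set $v_1,\ldots,v_{m_i}$ of $\SD^i_\perp(\tilde{F})$ with $m_i:=\dim \SD^i_\perp(\tilde{F})$, the condition defining $\mathcal{S}^{i-1}_{\partial_t,\tilde{F}}$ inside $\Pr^{i-1}_{\partial_t,\tilde{F}}$ becomes
\[
\rank\bigl(v_1\,|\,\cdots\,|\,v_{m_i}\,|\,w_1\,|\,\cdots\,|\,w_{k-1}\bigr)=\mathfrak{n}_i.
\]
This provides an affine identification $\Pr^{i-1}_{\partial_t,\tilde{F}}\simeq\mathcal{B}_{n\times(m_i+k-1)}$ under which $\mathcal{S}^{i-1}_{\partial_t,\tilde{F}}$ corresponds precisely to the maximal-rank stratum $\overline{\mathcal{B}}_{n\times(m_i+k-1)}$.

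Second I would invoke Lemma \ref{CasesRanks}: the hypothesis $i<r$ gives $m_i+k-1=\mathfrak{n}_i<n$, so the demanded rank is exactly the maximum possible for matrices in $\mathcal{B}_{n\times(m_i+k-1)}$. Lemma \ref{MatricesThin} then applies and tells us that the complement of $\overline{\mathcal{B}}_{n\times(m_i+k-1)}$ inside $\mathcal{B}_{n\times(m_i+k-1)}$ is a stratified subset of codimension $\geq 2$, i.e.\ a thin singularity. Pulling this back through the affine identification of the previous paragraph gives the sought thinness of the complement of $\mathcal{S}^{i-1}_{\partial_t,\tilde{F}}$ inside $\Pr^{i-1}_{\partial_t,\tilde{F}}$.

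The only point requiring care — and the place where I expect most of the bookkeeping — is to verify that the identification $\Pr^{i-1}_{\partial_t,\tilde{F}}\simeq\mathcal{B}_{n\times(m_i+k-1)}$ is genuinely an affine diffeomorphism, so that the notion of a thin stratified subset transports faithfully between the two sides (and, in particular, that codimension-$\geq 2$ is preserved after further intersecting with principal subspaces coming from other directions, as required by Lemma \ref{thinSubsets}). This is essentially immediate from the coordinate description in Subsection \ref{jetcoordinates} together with Equations (\ref{PpalSubspaceA})--(\ref{Sr}): varying each pure $\partial_t$-derivative $w_j$ independently corresponds exactly to varying the last $k-1$ columns of the matrix $B$ independently. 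Once this is in place, no further analytic content is needed beyond Lemma \ref{MatricesThin}.
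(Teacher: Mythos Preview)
Your proposal is correct and follows essentially the same approach as the paper, which simply points to case~$i)$ of the proof of Theorem~\ref{SAmple}. Two minor remarks: the invocation of Lemma~\ref{CondicionCriterio} is superfluous since the statement already hands you a $\partial_t$-adapted $\tilde{F}$, and the parenthetical about intersecting with principal subspaces from other directions goes beyond what the lemma asserts (it only claims thinness within the single principal subspace $\Pr^{i-1}_{\partial_t,\tilde{F}}$).
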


\begin{remark}[Associated singularities are not thin in the last step of the free case]
Note that, for $i=r$ in the free case (Definition \ref{FreeType}), the equality $m_r+k-1=n$ always holds (Lemma \ref{Dsplits}) and thus case $ii.a)$ in the case distinction above can never take place. This means that $\mathcal{S}^{r-1}_{\partial_t, \tilde{F}}\subset\Pr_{\partial_t, \tilde{F}}^{r-1}$ is either trivially ample or its ampleness follows by ampleness of $GL(m_r+k-1)$. Therefore, either ampleness of $\mathcal{S}^{r-1}_{\partial_t, \tilde{F}}\subset\Pr_{\partial_t, \tilde{F}}^{r-1}$ holds trivially or it holds without yielding a thin singularity (Remark \ref{NonThinnesOfGL}). 
\end{remark}

\section{The rank-$2$ case}

In this last Section we treat separately the $\rank$-2 case. We will show that the maximal growth condition for rank-$2$ distributions does not give raise to ample differential relations. By the arguments carried out until this point (see First Reduction, Subsection \ref{Red1}), ampleness of $\mathcal{R}^{\step -r}$ is equivalent to ampleness of $\mathcal{S}^{\step -r}$. We will show that $\mathcal{S}^{\step -r}$ is not ample for rank-$2$ distributions, thus showing that neither is $\mathcal{R}^{\step -r}$.

\begin{lemma}\label{MatricesHyperplanes}
The space $\overline{\mathcal{A}_{q\times q} }$ of maximal rank matrices with the first $q-1$ columns fixed (and linearly independent) is the complement of a hyperplane within the space $\mathcal{A}_{q\times q}$ of all matrices with identical fixed columns. Therefore, $\mathcal{A}_{q\times q}\subset \overline{\mathcal{A}_{q\times q} }$  is not ample.
\end{lemma}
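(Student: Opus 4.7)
The plan is to reduce the statement to Example \ref{hyperplane} by identifying $\mathcal{A}_{q\times q}$ with $\mathbb{R}^q$ and exhibiting the singularity locus inside it as a hyperplane.

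First I would fix notation: write the given linearly independent columns as $v_1,\dots,v_{q-1}\in\mathbb{R}^q$, so that every $A\in\mathcal{A}_{q\times q}$ takes the form $A_w=(v_1,\dots,v_{q-1},w)$ for some $w\in\mathbb{R}^q$. The assignment $w\mapsto A_w$ is a bijective affine map $\mathbb{R}^q\to\mathcal{A}_{q\times q}$. Under this identification, the subset $\overline{\mathcal{A}_{q\times q}}$ corresponds to those $w$ for which $A_w$ has maximal rank $q$, i.e.\ $\det(v_1,\dots,v_{q-1},w)\neq 0$.

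Next I would use multilinearity of the determinant: since $v_1,\dots,v_{q-1}$ are fixed, the map
\[ \varphi:\mathbb{R}^q\longrightarrow\mathbb{R},\qquad \varphi(w):=\det(v_1,\dots,v_{q-1},w) \]
is a linear functional on $\mathbb{R}^q$. Linear independence of $v_1,\dots,v_{q-1}$ implies $\varphi\not\equiv 0$, since any completion of $v_1,\dots,v_{q-1}$ to a basis produces a $w$ with $\varphi(w)\neq 0$. Therefore $H:=\ker\varphi\subset\mathbb{R}^q$ is an honest hyperplane, and the set of singular matrices in $\mathcal{A}_{q\times q}$ corresponds precisely to $H$. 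Equivalently, $H$ is the hyperplane spanned by $v_1,\dots,v_{q-1}$, and $w\notin H$ is exactly the condition that $A_w$ be invertible.

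Thus $\overline{\mathcal{A}_{q\times q}}$ is identified, via the affine isomorphism $w\mapsto A_w$, with $\mathbb{R}^q\setminus H$. By Example \ref{hyperplane}, the complement of a hyperplane in a real affine space is not ample: it has two convex connected components, each of which coincides with its own convex hull and therefore fails to span the whole ambient affine space. The claim follows. The step I expect to be the main obstacle is essentially bookkeeping---making sure the identification $\mathcal{A}_{q\times q}\cong\mathbb{R}^q$ is genuinely affine so that ampleness (a purely affine notion, by Definition \ref{def:ampleAffine}) transports correctly between the two spaces---but this is automatic since only the last column varies freely.
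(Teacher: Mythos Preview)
Your argument is correct and matches the paper's proof essentially verbatim: both observe that $\det(v_1,\dots,v_{q-1},w)$ is a nonzero linear functional in $w$, so the singular locus is a hyperplane, and then invoke Example~\ref{hyperplane}. Your version is slightly more explicit in verifying that the functional is nonzero via linear independence of the fixed columns, but otherwise the approaches coincide.
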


\begin{proof}

The space $ \overline{\mathcal{A}_{q\times q} }$ has two connected components, the one corresponding to matrices with positive determinant and the one with negative determinant. The complement of such space in $\mathcal{A}_{q\times q}$ corresponds to matrices with zero determinant. These matrices are determined by a linear equation (given by $det(A)=0$, which is linear in the entries of the non-fixed column) and thus represent a hyperplane within $\mathcal{A}_{q\times q}$. Non-ampleness follows (recall Example \ref{hyperplane}).
\end{proof}

\begin{theorem}
Let $M$ be a differentiable manifold of dimension $n\geq 3$ and consider the differential relation $\mathcal{S}^{\step -r}\subset J^{r-1}( \overline{\bigoplus_2 TM})$. Along any principal subspace, $\Pr^{r-1}_{\partial_t, F}\cap\mathcal{S}^{\step -r}$ is either not ample or trivially ample.
\end{theorem}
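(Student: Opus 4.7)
The plan is to mirror the case analysis used in the proof of Theorem \ref{SAmple} for the rank-$2$ setting. The key point is that for $k=2$ we have $k-1=1$ free vector parameterizing the principal subspace, so the ``$GL$-case'' used in part $ii.b)$ of that proof (which relied on Lemma \ref{LemmaGL} and required $k-1\geq 2$) is no longer available and is replaced by the hyperplane complement situation of Lemma \ref{MatricesHyperplanes}.

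First I would reduce to the non-normal case. If $\partial_t \perp \SD_F$ then Corollary \ref{TrivialAmpleness} (applied with $i=r-1$) gives that $\mathcal{S}^{r-1}_{\partial_t, F}$ equals the whole principal subspace $\Pr^{r-1}_{\partial_t,F}$, which is trivial ampleness. Assume therefore that $\partial_t$ is non-normal and, by Lemma \ref{ExistenceAdapted}, choose a $\partial_t$-adapted lift $\tilde F$ with $\pi^{r-1}(\tilde F)=\pi^{r-1}(F)$; by Lemma \ref{CondicionCriterio} the ampleness of $\mathcal{S}^{r-1}_{\partial_t, F}\subset \Pr^{r-1}_{\partial_t,F}$ is equivalent to that of $\mathcal{S}^{r-1}_{\partial_t, \tilde F}\subset \Pr^{r-1}_{\partial_t,\tilde F}$. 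Using the explicit description (\ref{Sr}) for $k=2$, the condition to lie in $\mathcal{S}^{r-1}_{\partial_t, \tilde F}$ becomes: the $n\times(m_r+1)$ matrix whose first $m_r$ columns are a fixed generating set of $\SD^{r}_\perp(\tilde F)$ and whose last column is $w_1$ (after absorbing the frozen $\nu_2$) has rank $\mathfrak{n}_r=n$. Here $m_r:=\dim\SD^{r}_\perp(\tilde F)$ is fixed data.

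Now I would split into three subcases according to $m_r$. If $m_r=n$, then $\SD^{r}_\perp(\tilde F)=T_p\R^n$ already and any choice of $w_1$ works, so $\mathcal{S}^{r-1}_{\partial_t,\tilde F}$ coincides with $\Pr^{r-1}_{\partial_t,\tilde F}$ and ampleness is trivial. If $m_r<n-1$, then the matrix has only $m_r+1<n$ columns, so rank $n$ is impossible and $\mathcal{S}^{r-1}_{\partial_t,\tilde F}$ is empty, again trivially ample in the sense of Definition \ref{def:ampleAffine}. The remaining case $m_r=n-1$ is the only substantive one: the matrix is $n\times n$ with the first $n-1$ columns fixed and linearly independent, and the rank-$n$ condition cuts out the complement of the hyperplane $\{\det=0\}$ in $\Pr^{r-1}_{\partial_t,\tilde F}$. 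By Lemma \ref{MatricesHyperplanes} (equivalently, Example \ref{hyperplane}) this complement has two convex components and is not ample. In all three cases the conclusion holds, finishing the proof.

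The argument is not hard once the machinery from Subsections \ref{Red2}--\ref{AdaptedFrames2} is in place; the only conceptual point worth emphasizing is that the obstruction lives exactly at the top step $i=r$ (the lower-order principal subspaces still have thin complements, by the same analysis as case $i)$ of the proof of Theorem \ref{SAmple}), and it arises from the codimension-$1$ determinantal equation which for $k=2$ governs the last layer of the Lie flag.
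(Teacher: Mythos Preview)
Your proof is correct and follows essentially the same route as the paper's own proof: reduce to the non-normal case via Corollary \ref{TrivialAmpleness}, pass to a $\partial_t$-adapted lift, translate the condition into a rank condition on an $n\times(m_r+1)$ matrix with $m_r$ fixed columns, and then split according to the value of $m_r$; the non-ample case $m_r=n-1$ is handled by Lemma \ref{MatricesHyperplanes} in both arguments. The only difference is that you explicitly include the case $m_r<n-1$ (empty intersection, hence trivially ample), whereas the paper rules it out by assuming $F$ is a formal solution; your version is slightly more general and matches the literal wording of the statement.
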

\begin{proof}
Take $F=(F_1,\cdots, F_k)\in J^{r-1}( \overline{\bigoplus_2 TM})$ over a point $p\in M$ and a principal direction $\partial_t\in T_pM$. If $\partial_t\perp\SD(F)$, then ampleness follows trivially (Corollary \ref{TrivialAmpleness}). 
In other case, we can reproduce verbatim the same argument from the proof of Theorem \ref{SAmple}. In that case, denoting by $m_r=\rank\left(\SD_\perp^r\right)$, we have that ampleness of  $\Pr^{r-1}_{\partial_t, F}\cap\mathcal{S}^{\step -r}$ is equivalent to ampleness of the space of $\rank$-$n$ matrices with $m_r$ columns fixed (i.e. all but one) $\overline{\mathcal{A}}_{n\times(m_r+1)}$ within the space of all matrices with the same fixed columns $\mathcal{A}_{n\times(m_r+1)}$. 

Considering the same case distinction as in the proof of Theorem \ref{SAmple}, and following the same notation for its subcases: ii.a) and ii.b), we encounter two possibilities:

\begin{itemize}
\item[ii.a)] If $n<m_r+1$ this means that $m_r=n$ since $m_r$ cannot be greater than $n$ for dimensional reasons. Therefore, ampleness follows trivially by Lemma \ref{MatricesTrivial}.

\item[ii.b)] If  $n=m_r+1$, then non-ampleness follows since the space $\overline{\mathcal{A}}_{n\times n}$ of maximal rank matrices with $m_r=n-1$ columns fixed within the space of all matrices with the same columns fixed $\mathcal{A}_{n\times n}$ is non-ample (with its singularity being a hyperplane) by Lemma \ref{MatricesHyperplanes}.
\end{itemize}

This completes the proof.\end{proof}

The fibered-criterion of ampleness readily implies Theorem \ref{NonAmplenessRank2}.

\begin{remark} Non-ampleness of the Engel and the $(2,3,5)$-conditions follow as particular cases.
\end{remark}

\end{document}